\newcommand{\field}[1]{\mathbb{#1}}
\newcommand{\B}{\field{B}}
\newcommand{\C}{\field{C}}
\newcommand{\Z}{\field{Z}}
\newcommand{\cB}{{\cal B}}
\newcommand{\cS}{{\cal S}}
\newcommand{\cT}{{\cal T}}
\newcommand{\cP}{{\cal P}}
\newcommand{\cR}{{\cal R}}
\newtheorem{theorem}{Theorem}
\newtheorem{lemma}{Lemma}
\newtheorem{example}{Example}
\newtheorem{remark}{Remark}
\newtheorem{corollary}{Corollary}
\begin{document}

\bibliographystyle{plain}

\title{
\begin{center}
Large Sets with Multiplicity
\end{center}
}
\author{
{\sc Tuvi Etzion}\thanks{Department of Computer Science, Technion,
Haifa 3200003, Israel, e-mail: {\tt etzion@cs.technion.ac.il}.} \and
{\sc Junling Zhou}\thanks{Department of Mathematics,
Beijing Jiaotong University, Beijing, China,
e-mail: {\tt jlzhou@bjtu.edu.cn}.}}

\maketitle

\begin{abstract}
Large sets of combinatorial designs has always been a fascinating topic in
design theory. These designs form a partition of the whole space into
combinatorial designs with the same parameters. In particular, a large set of block designs,
whose blocks are of size $k$ taken from an $n$-set, is a partition of all the $k$-subsets
of the $n$-set into disjoint copies of block designs, defined on the $n$-set, and with the same parameters.
The current most intriguing question in this direction
is whether large sets of Steiner quadruple systems exist and to provide explicit constructions
for those parameters for which they exist. In view of its difficulty no one ever presented an explicit construction
even for one nontrivial order. Hence, we seek for related generalizations.
As generalizations, to the existence question of large sets, we consider two related questions.
The first one to provide constructions for sets on Steiner systems in which each block (quadruple or
a $k$-subset) is contained in exactly $\mu$ systems.
The second question is to provide constructions
for large set of H-designs (mainly for quadruples, but also for larger block size).
We prove the existence of such systems for many parameters using orthogonal arrays,
perpendicular arrays, ordered designs, sets of permutations, and one-factorizations of the complete graph.
\end{abstract}

\vspace{0.5cm}

\noindent {\bf Keywords:} H-designs, large sets, Latin squares, one-factorizations, ordered designs,
permutations, perpendicular arrays, Steiner systems.

\footnotetext[1] {This research was supported in part by the
111 Project of China (B16002) and in part by the NSFC grants 11571034 and 11971053.
Part of the research was performed during a visit of T. Etzion to Beijing Jiaotong University.
He expresses sincere thanks to the 111 Project of China (B16002) for its support and to the Department
of Mathematics at Beijing Jiaotong University for their kind hospitality.

T. Etzion was also supported in part by the Bernard Elkin Chair in Computer Science.}

\newpage
\section{Introduction}
\label{sec:introduction}

A \emph{Steiner system of order $n$}, S$(t,k,n)$, is a pair $(Q,B)$, where $Q$ is an $n$-set
(whose elements are called \emph{points}) and $B$ is a collection
of $k$-subsets (called \emph{blocks}) of $Q$, such that each $t$-subset of $Q$ is contained
in exactly one block of $B$. A \emph{large set} of Steiner systems S$(t,k,n)$, on an $n$-set $Q$, is a partition of all $k$-subsets
of $Q$ into Steiner systems S$(t,k,n)$.

The interest in large sets is from block design point of view and also from graph theory point of view.
A large set of Steiner systems S$(1,k,n)$ is equivalent to a partition of the
$k$-uniform complete hypergraph into disjoint perfect matchings. If $k=2$ this large set
is known as \emph{one-factorization} of the complete graph $K_n$. A comprehensive discussion
on these one-factorizations are given in~\cite{Wal97}. Peltesohn~\cite{Pel36} solved
the problem for $k=3$. A solution for $k \geq 4$ was given in the celebrated work of
Baranayai~\cite{Bar75} who proved the existence of such large sets using a network flow.

 A Steiner system S$(2,3,n)$ is known as Steiner triple
system and the corresponding large set is known to exist for every admissible $n \equiv 1$ or $3 ~(\text{mod}~6)$, where $n \neq 7$.
It was first proved by Lu~\cite{Lu83,Lu84}, who left six open cases which were solved by Teirlinck~\cite{Tei91}.
An alternative shorter proof was given later by Ji~\cite{Ji05}.
The next interesting case is for Steiner system S$(3,4,n)$, which is also called a \emph{Steiner quadruple system}
and denoted by SQS$(n)$. A construction of large set of SQS$(n)$ is known only for the
trivial case when $n=4$. For larger $n$,
the only known result is the existence proof of Keevash~\cite{Kee18} who proved that
a large set of Steiner systems S$(t,k,n)$ exists if $n$ is large enough and satisfies
some necessary conditions (this $n$ is beyond our imagination).
The proof is nonconstructive and hence it does not throw any light on any explicit construction for these values of $n$.
It either does not provide any indication on the existence of such systems for smaller values of $n$ (which can be very large).

In the absence of known constructions for large sets of SQS$(n)$ the research can be done in two
different directions. One direction is to find the maximum number of pairwise disjoint SQS$(n)$. The best results in this
direction can be found in~\cite{EtHa91,EtZh20}. A second direction is to define large sets with multiplicity.
A \emph{large set of S$(t,k,n)$ with multiplicity $\mu$}, denoted by LS$(t,k,n;\mu)$,
is a set of Steiner systems S$(t,k,n)$ on an $n$-set~$Q$, such that each $k$-subset of~$Q$ is contained
in exactly $\mu$ systems. The goal is to find such large set for any given positive integer $\mu$, where
LS$(t,k,n;1)$ is a large set which implies LS$(t,k,n;\mu)$ for any $\mu \geq 1$.
Clearly, an LS$(t,k,n,\mu)$ consists of $\mu \binom{n-t}{k-t}$ Steiner systems S$(t,k,n)$.
Large sets with multiplicity were considered in~\cite{Etz96}, where it is proved that
LS$(3,4,2^r;\mu)$ exists, for any $r \geq 3$ and $\mu \geq 2$. Such large sets with multiplicity implies
the existence of another family of large sets (with multiplicity one), namely,
large sets of H-designs, which are interesting designs for themselves. They have applications in threshold schemes~\cite{Etz96}
and in quantum jump codes~\cite{ZhCh17}.

The goals of this paper are to construct large sets with multiplicity and large sets of H-designs.
The rest of this paper is organized as follows. In Section~\ref{sec:preliminary} we present the basic concepts
required for our expositions. These include the definition of an H-design and the connection between
a large set with multiplicity and large set of H-designs. Other concepts include orthogonal arrays,
one-factorizations, and arrays of permutations.
In Section~\ref{sec:large} we present some basic constructions for large sets with multiplicity and
for large sets of H-designs. The number of groups in these constructions of H-designs
is small and the same is true for the number of points in the large sets with multiplicity.
Section~\ref{sec:large} is devoted to large sets with multiplicity and large sets of H-designs with small parameters.
In Section~\ref{sec:small} large sets of H-designs
are obtained recursively from an initial large set of H-designs with small parameters.
In Section~\ref{sec:large_permute} large sets with multiplicity are constructed using
some structure of Steiner systems and sets of permutations.
They yield large sets of H-designs with small parameters. In Section~\ref{sec:review} we present
a few well-known constructions of pairwise disjoint SQS$(n)$ which will be adapted and used
for our constructions of large sets with multiplicity of Steiner quadruple systems.
These constructions contain doubling and quadrupling constructions.
In Section~\ref{sec:five} and Section~\ref{sec:mainC} we present our main construction for
large sets with multiplicity of Steiner quadruple systems. In Section~\ref{sec:five} a quadrupling construction
for large sets with multiplicity are presented. In Section~\ref{sec:mainC} the quadrupling construction
is generalized for multiplication by $2^m$ instead of multiplication by 4.
In Section~\ref{sec:conclusion}
we summarize our work and suggest directions for future research.

\section{Preliminaries}
\label{sec:preliminary}

This section is devoted to define several concepts which are important in our exposition.
In Section~\ref{sec:Hdesigns} we define the concepts of H-designs and large sets of H-designs
whose construction is one of the goals of our work. In Section~\ref{sec:OA} we define the concepts of orthogonal
arrays and large sets of orthogonal arrays. Finally, we prove a connection between
large set with multiplicity, large set of orthogonal arrays, and large set of H-designs.
In Section~\ref{sec:oneF} we define a design used in many constructions of block designs,
namely, one-factorization.
In Section~\ref{sec:perm_arrays} we consider permutations and arrays of permutations such as
Latin squares, ordered designs, and perpendicular arrays. Finally, in Section~\ref{sec:config}
we define the concept of configurations which enables us to categorize the different blocks
of a design after some partition of the point set is made.

\subsection{H-designs}
\label{sec:Hdesigns}

Large sets with multiplicity have their own interest, but they are also important in
constructions for large sets of H-designs, which are large sets of Steiner systems ``with holes"~\cite{Tei93}.
An \emph{H-design} H$(n,g,k,t)$ is a triple $(Q,G,B)$, which satisfies the following
properties:

\begin{enumerate}
\item $Q$ is a set with $ng$ points.

\item $G$ is a partition of $Q$ into $n$ subsets (called \emph{groups}), each one
with $g$ points.

\item $B$ is a set of $k$-subsets of $Q$ (called \emph{blocks}), such that a group and a block
contain at most one common point, and any $t$ points from any $t$ distinct groups occur in exactly one block.
\end{enumerate}

To simplify the constructions in the sequel, we will assume that for a given H-design H$(n,g,k,t)$,
the elements of the $(ng)$-set $Q$ are ordered and in a block $\{ x_1,x_2,\ldots,x_k\}$ the elements
are ordered, i.e., $x_1 < x_2 < \cdots < x_k$, by the order of $Q$.

H-designs were defined and used first in~\cite{HMM,Mil90}.
Necessary and sufficient conditions for the existence of H-designs for which $(k,t)=(4,3)$
were proved in~\cite{Ji09,Ji19,Mil90}.

\begin{theorem}
\label{thm:necess}
The necessary and sufficient conditions for the existence of an H$(n, g, 4, 3)$ are $gn\equiv 0$ {\rm (mod 2)},
$g(n-1)(n -2)\equiv 0$ {\rm (mod 3)}, $n\geq 4$, and $(n,g)\neq (5,2)$.
\end{theorem}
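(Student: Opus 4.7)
The proof splits cleanly into necessity, which is elementary, and sufficiency, which is the substantive part. My plan is to prove necessity from scratch by double counting and then rely on the recursive construction machinery in the cited references for sufficiency.

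For necessity, the bound $n \geq 4$ is immediate: a block has four points and meets each group in at most one point, so at least four groups are needed. For the divisibility conditions I would double count. Fix a point $x$ in group $G_i$; each block through $x$ contains three further points, one in each of three distinct other groups, and so accounts for three unordered pairs of points from two different groups of $\{G_1,\dots,G_n\}\setminus\{G_i\}$. Each such pair together with $x$ is a cross-group triple covered by exactly one block, giving
\[
r_x \;=\; \frac{\binom{n-1}{2}\,g^{2}}{3} \;=\; \frac{g^{2}(n-1)(n-2)}{6},
\]
which forces $3\mid g(n-1)(n-2)$. An analogous count for a cross-group pair $\{x,y\}$, with $(n-2)g$ possible completions and two per block through the pair, yields $\lambda_{xy}=(n-2)g/2$, so $2\mid(n-2)g$, equivalently $2\mid ng$. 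The sporadic exclusion of $(n,g)=(5,2)$ is handled by a short ad hoc argument on the derived structure at a point: the parameters force eight blocks through each point, and a direct case analysis of how the five groups of size two can interact in such a derived configuration produces a contradiction.

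For sufficiency, my plan is a layered construction parameterised by $g$. The base layer $g=1$ coincides with Steiner quadruple systems SQS$(n)$, existing exactly for $n\equiv 2,4\pmod 6$ by Hanani, which matches the necessary conditions when $g=1$. For $g\geq 2$ I would use three families of recursions in combination: a multiplying construction producing an H$(n,gh,4,3)$ from an H$(n,g,4,3)$ together with an auxiliary ingredient such as a group divisible design of strength three or a resolvable transversal design; a filling-in-groups construction that converts an H-design on a larger point set into one with larger group size by inserting suitable sub-designs on the groups; and a finite list of directly constructed small seeds coming from affine geometries over $\F_{2}$ or $\F_{3}$, one-factorizations of complete graphs, or computer search. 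The induction then has to show that every admissible pair $(n,g)\neq(5,2)$ is reachable from some seed by a chain of these recursions.

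The main obstacle is exactly this bookkeeping. Each recursion carries its own arithmetic restrictions, so admissibility must be covered by a finite union of recursion outputs plus seeds. Concretely the plan is to split on the residue class of $g$ modulo $6$ (or $12$ if needed) and, within each class, either exhibit a direct construction or show a recursion reduces the problem to a strictly smaller admissible pair already established. The most delicate point is verifying that the exception $(5,2)$ never propagates, i.e., that no admissible $(n,g)$ with $(n,g)\neq(5,2)$ has H$(5,2,4,3)$ as an indispensable seed; this is what pins down the fact that $(5,2)$ is the only exception. Since this case analysis is precisely what is carried out in \cite{Mil90,Ji09,Ji19}, I would quote their recursive schemes and verify coverage of the admissible set rather than redevelop the constructions.
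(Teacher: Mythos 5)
The paper offers no proof of this theorem at all: it is quoted as a known result with the existence/nonexistence spectrum attributed entirely to Mills and Ji \cite{Mil90,Ji09,Ji19}. Your necessity argument is correct and standard: the replication number $r_x = g^2(n-1)(n-2)/6$ and the pair count $\lambda_{xy}=(n-2)g/2$ give exactly the stated divisibility conditions (using that $3\mid g^2(n-1)(n-2)$ iff $3\mid g(n-1)(n-2)$, and that $(n-2)g\equiv ng \pmod 2$), and deferring sufficiency to the cited recursive machinery is consistent with what the paper itself does. So in substance you are aligned with the paper, and you add the elementary half that the paper omits.

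The one genuine gap is your treatment of the exception $(n,g)=(5,2)$. You claim a ``short ad hoc argument on the derived structure at a point'' yields a contradiction. It does not: the derived configuration at a point of a putative H$(5,2,4,3)$ is a $\{3\}$-GDD of type $2^4$ (equivalently, a triangle decomposition of the complete multipartite graph $K_{2,2,2,2}$, which has $24$ edges and all degrees equal to $6$), and such GDDs exist, since type $g^u=2^4$ satisfies the standard existence conditions $(u-1)g\equiv 0 \pmod 2$ and $u(u-1)g^2\equiv 0\pmod 6$. Hence every local (point-derived) obstruction vanishes, and the nonexistence of H$(5,2,4,3)$ requires a genuinely global argument --- this is a nontrivial result of Mills, not a two-line case analysis. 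As written, your plan for the exclusion would fail; you should either cite the nonexistence directly from \cite{Mil90} or acknowledge that a substantially longer exhaustive argument is needed there. The rest of the bookkeeping concerns you raise for sufficiency are real but are exactly what \cite{Mil90,Ji09,Ji19} carry out, so quoting them is appropriate.
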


A \emph{large set of H-designs}, denoted by LH$(n,g,k,t)$, is a partition, of all the $k$-subsets
from the $ng$ points of $Q$ taken from any $k$ distinct groups, into pairwise
disjoint H-designs H$(n,g,k,t)$. The number of H-designs in a large set is
calculated in the following lemma.

\begin{lemma}
\label{lem:LSH_size}
The size of an LH$(n,g,k,t)$ is $\binom{n-t}{k-t} g^{k-t}$.
\end{lemma}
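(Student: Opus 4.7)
The plan is a standard two-step counting argument. First I would count the total number of ``transversal'' $k$-subsets of $Q$, meaning those $k$-subsets whose elements come from $k$ distinct groups, since these are precisely the $k$-subsets that the large set partitions. Choosing the $k$ groups and then one representative from each yields
\[
\binom{n}{k}\, g^{k}
\]
such subsets.

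Next, I would count the number of blocks in a single H$(n,g,k,t)$ by double counting incidences between blocks and the $t$-subsets that come from $t$ distinct groups. There are $\binom{n}{t}g^{t}$ such $t$-subsets, and by the defining property each of them lies in exactly one block. On the other hand, every block consists of $k$ points from $k$ distinct groups, so it contains exactly $\binom{k}{t}$ $t$-subsets taken from $t$ distinct groups. Equating the two counts gives
\[
|B| \;=\; \frac{\binom{n}{t}g^{t}}{\binom{k}{t}}
\]
blocks per H-design.

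Finally, since the large set partitions the transversal $k$-subsets into disjoint H-designs of equal size, the number of H-designs in the LH$(n,g,k,t)$ equals the ratio
\[
\frac{\binom{n}{k}g^{k}}{\binom{n}{t}g^{t}/\binom{k}{t}}
\;=\; \frac{\binom{n}{k}\binom{k}{t}}{\binom{n}{t}}\, g^{k-t}.
\]
Applying the standard identity $\binom{n}{k}\binom{k}{t}=\binom{n}{t}\binom{n-t}{k-t}$ collapses this to $\binom{n-t}{k-t}g^{k-t}$, as required. There is no real obstacle here; the only point requiring a moment of care is to verify that every transversal $k$-subset is indeed covered (which follows because the blocks of the H-designs are exactly transversal $k$-subsets and the large set is a partition of them), so the division is legitimate and the binomial identity finishes the job.
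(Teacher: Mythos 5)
Your proposal is correct and follows essentially the same argument as the paper: count the transversal $k$-subsets as $\binom{n}{k}g^k$, count the blocks of a single H-design as $\binom{n}{t}g^t/\binom{k}{t}$, and divide. The paper simply states the block count without the explicit double-counting justification you supply, but the computation and the concluding binomial identity are identical.
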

\begin{proof}
An H-design H$(n,g,k,t)$ contains $\frac{\binom{n}{t}}{\binom{k}{t}} g^t$ blocks and
the number of $k$-subsets in $Q$, where each $k$-subset contains at most one
element from each group, is $\binom{n}{k} g^k$. Hence, the number of
H-designs in an LH$(n,g,k,t)$ is
$\binom{n}{k} g^k  {\Huge / }  {\frac{\binom{n}{t}}{\binom{k}{t}} g^t}=\frac{(n-t)!}{(n-k)!(k-t)!} g^{k-t} = \binom{n-t}{k-t} g^{k-t}$.
\end{proof}

An H-design H$(n, g, 3, 2)$ is usually called a \emph{group divisible design} of type
$g^n$ and denoted by GDD$(g^n)$. The large sets of
disjoint group divisible designs were first studied because of their connection with
perfect threshold schemes~\cite{ScSt89,StVa88}.
Combining the existence result of large sets of Steiner triple systems and much work
on large sets of GDDs by Chen et al~\cite{CLS92} and Teirlinck~\cite{Tei93}, Lei~\cite{Lei97} finally
established that there exists a large set of GDD$(g^n)$ if and only if $n(n-1)g^2 \equiv 0$
(mod 6), $(n-1)g\equiv 0$ (mod 2), and $(g,n) \neq (1,7)$.

\subsection{Orthogonal Arrays}
\label{sec:OA}

An \emph{orthogonal array} OA$(t,k,n)$ is an $n^t \times k$ matrix $C$, with entries from $\Z_n$,
such that any submatrix generated by any $t$ columns of $C$ contains each ordered $t$-tuple from $\Z_n$ exactly once as a row.
A \emph{large set of orthogonal arrays} LOA$(t,k,n)$ is a set of $n^{k-t}$ orthogonal arrays OA$(t,k,n)$,
such that each vector of length $k$ over $\Z_n$ occurs as a row in exactly one of the orthogonal arrays.

\begin{theorem}
\label{thm:OAtoLOA}
If there exists an OA$(t,k,n)$, then there exists an LOA$(t,k,n)$.
\end{theorem}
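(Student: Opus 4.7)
The plan is to produce the $n^{k-t}$ arrays of the large set as translates of the given OA by a well-chosen set of shift vectors. The key observation is that shifting every row of a matrix $C$ by a fixed vector $v\in\Z_n^k$ preserves the OA property: for any $t$ columns $i_1<\cdots<i_t$, the corresponding $t$-column submatrix of $C+v$ is obtained from that of $C$ by adding the constant vector $(v_{i_1},\dots,v_{i_t})$ to every row, which merely permutes the set of $t$-tuples of $\Z_n^t$. Hence if $C$ is an OA$(t,k,n)$, so is $C+v$ for every $v$.

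Next I would identify the rows of $C$ via the first $t$ columns. By definition of an OA$(t,k,n)$, the submatrix of $C$ on columns $1,\dots,t$ lists every element of $\Z_n^t$ exactly once. Consequently, the set of rows of $C$ has the form
\[
R=\bigl\{(a,f(a))\;:\;a\in\Z_n^t\bigr\}\subseteq\Z_n^t\times\Z_n^{k-t},
\]
for some function $f:\Z_n^t\to\Z_n^{k-t}$ (the ``last $k-t$ coordinates'' as a function of the ``first $t$ coordinates'').

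With this in hand, I would define the family of shifts
\[
V=\bigl\{(0,\dots,0,w)\;:\;w\in\Z_n^{k-t}\bigr\}\subseteq\Z_n^k,
\]
so that $|V|=n^{k-t}$, and set $C_w:=C+(0,w)$ for each $w\in\Z_n^{k-t}$. By the first paragraph each $C_w$ is an OA$(t,k,n)$. To see that the collection $\{C_w:w\in\Z_n^{k-t}\}$ is an LOA, I must check that every vector $u=(u_1,u_2)\in\Z_n^t\times\Z_n^{k-t}$ appears in exactly one of the arrays $C_w$ and exactly once there. A row of $C_w$ has the form $(a,f(a)+w)$ for some $a\in\Z_n^t$, so the equation $u=(a,f(a)+w)$ forces $a=u_1$ and $w=u_2-f(u_1)$, which determines both $w$ and the row uniquely. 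Thus the $n^{k-t}$ arrays partition $\Z_n^k$, as required.

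There is no real obstacle here; the only subtle point is recognizing that the OA property makes the first $t$ coordinates of a row a primary key for the row, so that a transversal of the subgroup $\{0\}^t\times\Z_n^{k-t}$ in $\Z_n^k$ (namely $R$ itself) pairs with $V$ to give a unique decomposition of every vector in $\Z_n^k$. If desired, one could replace the ``first $t$ columns'' above by any fixed $t$ columns and adjust $V$ accordingly.
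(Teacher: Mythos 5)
Your proposal is correct and is essentially the paper's own proof: both construct the large set as the $n^{k-t}$ translates of $C$ by shift vectors supported on $k-t$ fixed coordinates, and both use the fact that the remaining $t$ columns of an OA act as a primary key for the rows to show the translates are pairwise disjoint and cover $\Z_n^k$. The only cosmetic difference is that the paper shifts the first $k-t$ coordinates and keys on the last $t$, while you do the reverse.
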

\begin{proof}
Let $C$ be an orthogonal array OA$(t,k,n)$. Let $X$ be the set of $n^{k-t}$ vectors of length~$k$ over $\Z_n$
such that the last $t$ entries in all the vectors of $X$ are zeroes.
For each $x \in X$ define the set $C_x$ as follows:
$$
C_x \triangleq x + C \triangleq \{ x+c ~:~ c \in C \}.
$$
Clearly, $C_x$ is also an orthogonal array OA$(t,k,n)$.

Assume now that there exist two words $x_1,x_2 \in X$ and two words $c_1,c_2 \in C$ such that
$x_1 + c_1 = x_2 + c_2$. Since the last $t$ entries in $x_1$ and $x_2$ are zeroes. This implies
that the last $t$ entries of $c_1$ and $c_2$ are equal. Since $C$ is an orthogonal array OA$(t,k,n)$ and the
last $t$ entries of $c_1$ and $c_2$ are equal, it follows that $c_1=c_2$.
Hence, we also have $x_1 = x_2$ and therefore $\{ C_x ~:~ x \in X \}$ is an LOA$(t,k,n)$.
\end{proof}

By Theorem~\ref{thm:OAtoLOA} the existence of an OA$(t,k,n)$ implies the existence
of an LOA$(t,k,n)$. In the following results we will use these orthogonal arrays to form
large sets of H-designs. The related constructions will require the existence of other large sets
of H-designs with smaller group size or the existence of some large sets with multiplicity.

\begin{theorem}
\label{thm:expand}
If there exists an LH$(n,g,k,t)$ and an
OA$(t,k,u)$, then there exists a large set of H-designs LH$(n,gu,k,t)$.
\end{theorem}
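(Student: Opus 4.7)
The plan is to build the new H-designs by taking a ``product'' of each H-design in the given LH$(n,g,k,t)$ with each orthogonal array in an LOA$(t,k,u)$ (which we obtain for free from the hypothesis by Theorem~\ref{thm:OAtoLOA}). Concretely, let $(Q,G,\{H_\alpha\})$ be the given large set, where $Q$ has groups $G_1,\ldots,G_n$ of size $g$, and let $\{C_\beta\}_{\beta \in \Z_u^{k-t}}$ be the LOA$(t,k,u)$ produced from the given OA$(t,k,u)$. Put $Q' \deff Q \times \Z_u$, with groups $G_i' \deff G_i \times \Z_u$ of size $gu$; this gives a partition of $Q'$ into $n$ groups of size $gu$, as required.

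For each pair $(H_\alpha,C_\beta)$ I would form an H-design $H_{\alpha,\beta}$ on $Q'$ as follows. For each block $B = \{x_1 < x_2 < \cdots < x_k\}$ of $H_\alpha$ (using the fixed ordering of $Q$ assumed in the paper) and each row $(r_1,\ldots,r_k)$ of $C_\beta$, declare $\{(x_1,r_1),(x_2,r_2),\ldots,(x_k,r_k)\}$ to be a block of $H_{\alpha,\beta}$. Since the $x_j$ lie in $k$ distinct groups of $G$, the $(x_j,r_j)$ lie in $k$ distinct groups of $G'$, so each block meets each group of $G'$ in at most one point.

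The next step is to verify the covering property: any $t$ points from $t$ distinct groups $G_{i_1}',\ldots,G_{i_t}'$ of $Q'$, say $(y_1,s_1),\ldots,(y_t,s_t)$ with $y_j \in G_{i_j}$, lie in exactly one block of $H_{\alpha,\beta}$. In $H_\alpha$ there is a unique block $B$ containing $\{y_1,\ldots,y_t\}$; write $B=\{x_1<\cdots<x_k\}$ and let $p_1,\ldots,p_t$ be the positions of $y_1,\ldots,y_t$ in this ordering. The orthogonal array property of $C_\beta$ applied to columns $p_1,\ldots,p_t$ guarantees that the ordered $t$-tuple $(s_1,\ldots,s_t)$ appears in precisely one row of $C_\beta$, which singles out a unique block of $H_{\alpha,\beta}$ on $\{y_1,\ldots,y_t\}$. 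Hence each $H_{\alpha,\beta}$ is an H$(n,gu,k,t)$.

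Finally I would check that $\{H_{\alpha,\beta}\}$ is a large set. A generic $k$-subset of $Q'$ meeting each group of $G'$ at most once has the form $\{(x_1,a_1),\ldots,(x_k,a_k)\}$ with the $x_j$ in $k$ distinct groups of $G$. By the LH-property, $\{x_1,\ldots,x_k\}$ is a block of exactly one $H_\alpha$; by the LOA-property, $(a_1,\ldots,a_k)$ (ordered according to the ordering of the $x_j$) is a row of exactly one $C_\beta$. Thus the pair $(\alpha,\beta)$ is uniquely determined, so every admissible $k$-subset of $Q'$ appears in precisely one $H_{\alpha,\beta}$. A quick count confirms Lemma~\ref{lem:LSH_size}: there are $\binom{n-t}{k-t}g^{k-t}\cdot u^{k-t}=\binom{n-t}{k-t}(gu)^{k-t}$ designs. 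No step is a real obstacle; the only point requiring care is keeping the ordering of the $x_j$ consistent so that the OA columns are indexed in a well-defined way, which the paper's standing convention on ordered blocks provides.
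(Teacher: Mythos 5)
Your proposal is correct and follows essentially the same route as the paper: pair each H-design of the given large set with each orthogonal array of the LOA$(t,k,u)$ obtained via Theorem~\ref{thm:OAtoLOA}, form product blocks, and use uniqueness of the projected block together with uniqueness of the OA row to get the large-set property. The only difference is that you spell out the verification that each product system is an H$(n,gu,k,t)$ (via the OA property on the $t$ relevant columns), which the paper leaves as ``easy to verify.''
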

\begin{proof}
Let $A_1,A_2,\ldots,A_\delta$ be an LH$(n,g,k,t)$ defined on an $(ng)$-set $Q$
(taken as $\Z_n \times \Z_g$ with group set $\{\{i\}\times \Z_g ~:~ i \in \Z_n\}$),
where $\delta=\binom{n-t}{k-t} g^{k-t}$. By Theorem~\ref{thm:OAtoLOA} the existence
of OA$(t,k,u)$ implies the existence of an LOA$(t,k,u)$. Let
$B_1,B_2,\ldots,B_{\gamma}$ be an LOA$(t,k,u)$ defined on $\Z_u$, where $\gamma=u^{k-t}$.
Define $\delta \gamma$ sets $C_{i,j}$, $1\le i\le \delta$, $1\le j\leq \gamma$,
where $C_{i,j}$ consists of $\frac{\binom{n}{t}}{\binom{k}{t}}(gu)^t$ blocks.
For each $\{(x_1,y_1),(x_2,y_2),\ldots,(x_k,y_k)\}\in A_i$, where $(x_\ell,y_\ell) \in \Z_n \times \Z_g$,
and each $(b_1,b_2,\ldots,b_k)\in B_j$ the following block is defined:
$$
\{(x_1,(y_1,b_1)),(x_2,(y_2,b_2)),\ldots,(x_k,(y_k,b_k))\}~.
$$
It is easy to verify that each $C_{i,j}$ forms an H-design with group set $\{ \{ i \} \times (\Z_g \times \Z_u) ~:~ i \in \Z_n \}$.
By Lemma~\ref{lem:LSH_size}, the number of H-designs in LH$(n,gu,k,t)$ is $\binom{n-t}{k-t} (gu)^{k-t} = \delta \gamma$ and
the size of $\C \triangleq \{ C_{i,j} ~:~ 1 \leq i \leq \delta,~ 1 \leq j \leq \gamma \}$ is $\delta \gamma$. Therefore,
to prove that $\C$ forms a large set of H-designs LH$(n,gu,k,t)$, it is
sufficient to show that each $k$-subset of $\Z_n \times (\Z_g \times \Z_u)$, meeting each group in at
most one point, is contained in one H-designs from\linebreak
$\{ C_{i,j} ~:~ 1 \leq i \leq \delta,~ 1 \leq j \leq \gamma \}$.

Let $Z=\{ (x_1,(y_1,b_1)),(x_2,(y_2,b_2)),\ldots,(x_k,(y_k,b_k)) \}$,
where $(x_i,(y_i,b_i)) \in \Z_n \times (\Z_g \times \Z_u))$ and $x_1 < x_2 < \cdots < x_k$.
The set $Z'=\{ (x_1,y_1),(x_2,y_2),\ldots,(x_k,y_k) \}$ is a $k$-subset of $\Z_n \times \Z_g$
such that $x_1 < x_2 < \cdots < x_k$ and hence $Z'$ is a $k$-subset in a unique H-design, say~$A_i$.
Also, $(b_1,b_2,\ldots,b_k)$ is a row in a unique orthogonal array $B_j$. Therefore,
$Z$ is a $k$-subset of the unique set $C_{i,j}$. Thus, $\{ C_{i,j} ~:~ 1 \leq i \leq \delta,~ 1 \leq j \leq \gamma \}$
is a large set of H-designs LH$(n,gu,k,t)$.
\end{proof}

The next result generalizes a related theorem for LH$(n,g,4,3)$ proved in~\cite{Etz96}.
\begin{theorem}
\label{thm:LSH}
If there exist an OA$(t,k,g)$ and an LS$(t,k,n;g^{k-t})$,
then there exists an LH$(n,g,k,t)$.
\end{theorem}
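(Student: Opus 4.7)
The plan is to view the LS$(t,k,n;g^{k-t})$ as an index set for the H-designs to be produced, and to use the OA (upgraded to an LOA) to ``inflate'' each Steiner system in the LS to an H-design on $\Z_n\times\Z_g$. The counts match exactly: LS$(t,k,n;g^{k-t})$ consists of $g^{k-t}\binom{n-t}{k-t}$ Steiner systems, and by Lemma~\ref{lem:LSH_size} an LH$(n,g,k,t)$ consists of exactly $g^{k-t}\binom{n-t}{k-t}$ H-designs, so the construction will output one H-design per Steiner system in the LS.

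Concretely, take the ground set of the H-designs to be $Q=\Z_n\times\Z_g$ with groups $\{i\}\times\Z_g$, $i\in\Z_n$, and let the LS$(t,k,n;g^{k-t})$ be indexed as $S_1,\dots,S_N$ on $\Z_n$, where $N=g^{k-t}\binom{n-t}{k-t}$. By Theorem~\ref{thm:OAtoLOA}, the given OA$(t,k,g)$ yields an LOA$(t,k,g)$ consisting of $g^{k-t}$ orthogonal arrays $O_1,\dots,O_{g^{k-t}}$ whose row-sets partition $\Z_g^k$. For every $k$-subset $K\subseteq\Z_n$ there are exactly $g^{k-t}$ indices $\alpha$ with $K\in S_\alpha$, so I fix, once and for all, an arbitrary bijection $\phi_K$ from $\{\alpha: K\in S_\alpha\}$ onto $\{O_1,\dots,O_{g^{k-t}}\}$. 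For each $\alpha$ I then define $H_\alpha$ on $Q$ by taking, for every block $K=\{x_1<\cdots<x_k\}\in S_\alpha$, all the $g^t$ blocks $\{(x_1,y_1),\dots,(x_k,y_k)\}$ where $(y_1,\dots,y_k)$ ranges over the rows of the OA $\phi_K(\alpha)$.

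Two things need to be verified. First, each $H_\alpha$ is a genuine H$(n,g,k,t)$: given $t$ points from $t$ distinct groups, their first coordinates form a $t$-subset that lies in a unique block $K\in S_\alpha$, and the OA property applied to $\phi_K(\alpha)$ says the corresponding $t$-tuple of second coordinates appears in exactly one row; expansions of the other blocks $K'\in S_\alpha$ cannot contain all $t$ points because $K'$ does not contain that $t$-subset. Second, the family $\{H_\alpha\}$ is a large set: a $k$-subset $Z=\{(x_1,y_1),\dots,(x_k,y_k)\}$ with $x_1<\cdots<x_k$ belongs to $H_\alpha$ iff $K=\{x_1,\dots,x_k\}\in S_\alpha$ and $(y_1,\dots,y_k)$ is a row of $\phi_K(\alpha)$. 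The LOA property singles out a unique $O_j$ containing $(y_1,\dots,y_k)$, and the bijection $\phi_K$ then singles out a unique $\alpha$ with $K\in S_\alpha$ and $\phi_K(\alpha)=O_j$.

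The one place where care is required is the assignment step. If one tried to assign a single OA to each entire Steiner system (rather than to each block of each system), one would need a global coloring $\alpha\mapsto O_{\psi(\alpha)}$ such that for every $k$-subset $K$ the $g^{k-t}$ systems containing $K$ receive the $g^{k-t}$ distinct OAs—an edge-coloring constraint whose feasibility is unclear. The per-block bijection $\phi_K$ sidesteps this obstacle entirely: because the assignments for different $k$-subsets $K$ are independent and the counts coincide, any arbitrary bijection works, and the argument for the H-design property still goes through because it only uses the OA attached to a single block at a time.
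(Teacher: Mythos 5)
Your proposal is correct and follows essentially the same route as the paper: the paper also upgrades the OA$(t,k,g)$ to an LOA via Theorem~\ref{thm:OAtoLOA} and then, for each $k$-subset $K$ of $\Z_n$, assigns the $g^{k-t}$ arrays $C_1,\dots,C_{g^{k-t}}$ to the $g^{k-t}$ systems containing $K$ — which is exactly your per-block bijection $\phi_K$, made explicit. Your added remark about why a per-system (global) assignment would be problematic, and your direct verification that each $H_\alpha$ is an H-design (where the paper leans on a counting argument), are fair refinements but not a different proof.
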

\begin{proof}
Let $S_1,S_2,\ldots,S_\delta$, $\delta =\binom{n-t}{k-t} g^{k-t}$, be an LS$(t,k,n;g^{k-t})$, on the point set $\Z_n$,
and let $C_1,C_2,\ldots,C_{g^{k-t}}$ be an LOA$(t,k,g)$ implied by
Theorem~\ref{thm:OAtoLOA}. We construct
an LH$(n,g,k,t)$ on the point set $\Z_n \times \Z_g$, i.e. group sets
$\{i\} \times \Z_g$, $i \in \Z_n$, with H-designs $S^*_1,S^*_2,\ldots,S^*_\delta$.
Given any $k$-subset $\{ x_1 , x_2 ,\ldots , x_k \}$ of $\Z_n$ which appears in $g^{k-t}$ distinct Steiner
systems, $S_{i_1},S_{i_2},\ldots,S_{i_{g^{k-t}}}$, we form the following $g^t$ blocks for $S^*_{i_j}$,
$1 \leq j \leq g^{k-t}$,
\begin{equation}
\label{eq:ls_Hd}
\{ (x_1,y_1),(x_2,y_2),\ldots,(x_k,y_k) \},~~ (y_1,y_2,\ldots,y_k) \in C_j~.
\end{equation}
By Lemma~\ref{lem:LSH_size}, the number of H-designs in LH$(n,g,k,t)$ is $\delta$ and hence
to prove that $S^*_1,S^*_2,\ldots,S^*_\delta$ form an LH$(n,g,k,t)$, it is
sufficient to show that each $k$-subset of $\Z_n \times \Z_g$, meeting each group in at
most one point, is contained in one of the H-designs from
$S^*_1,S^*_2,\ldots,S^*_\delta$.

Let $Z=\{ (x_1,y_1),(x_2,y_2),\ldots,(x_k,y_k) \}$, where $(x_i,y_i) \in \Z_n \times \Z_g$ and
${|\{ x_1 , x_2 ,\ldots , x_k \}|=k}$.
Since $X=\{ x_1 , x_2 ,\ldots , x_k \}$ is a $k$-subset of $\Z_n$,
it follows that $X$ is contained in exactly $g^{k-t}$ Steiner systems $S_{i_1},S_{i_2},\ldots,S_{i_{g^{k-t}}}$
of the large set with multiplicity LS$(t,k,n;g^{k-t})$. Since $Y=( y_1 , y_2 ,\ldots , y_k )$ is
a word of length $k$ over $\Z_g$,
it follows that $Y$ is a row of an OA$(t,k,g)$ from the related large set.
It follows by the construction implied by (\ref{eq:ls_Hd}) that $Z$ is contained in one of the constructed blocks.
This completes the proof.
\end{proof}

Theorem~\ref{thm:LSH} is the key for our construction of new large sets of H-designs which
will be discussed in Section~\ref{sec:large}. In~\cite{Etz96} we have the following application
of Theorem~\ref{thm:LSH}.

\begin{theorem}
\label{thm:LHpower 2}
For any integers $g \geq 2$ and $r \geq 2$, there exists an LH$(2^r,g,4,3)$.
\end{theorem}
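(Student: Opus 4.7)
The plan is to apply Theorem~\ref{thm:LSH} with $t=3$, $k=4$, and $n=2^r$. Since $k-t=1$, the hypotheses reduce to the existence of an OA$(3,4,g)$ and an LS$(3,4,2^r;g)$, both of which I would supply for every $g\geq 2$ and $r\geq 2$.

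For the orthogonal array, I would exhibit an OA$(3,4,g)$ explicitly for every $g\geq 2$: take the $g^3\times 4$ array whose rows are $(x_1,x_2,x_3,-(x_1+x_2+x_3))$ with arithmetic in $\Z_g$. Fixing any three of the four coordinates determines the fourth, so every submatrix on three columns contains each triple of $\Z_g$ exactly once. (Equivalently, the group operation on $\Z_g$ gives a 3-dimensional Latin cube.)

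For the large set with multiplicity I would split on $r$. If $r\geq 3$, then the existence of LS$(3,4,2^r;\mu)$ for every $\mu\geq 2$ is precisely the main result of~\cite{Etz96}; specializing to $\mu=g$ gives what I need. If $r=2$, the $4$-set $Q$ contains a single $4$-subset, and the unique SQS$(4)$ on $Q$ has that subset as its sole block, so $g$ copies of this trivial system form an LS$(3,4,4;g)$ trivially.

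Once both ingredients are in hand, Theorem~\ref{thm:LSH} outputs an LH$(2^r,g,4,3)$ directly, completing the proof. The only real content here is already packaged in Theorem~\ref{thm:LSH} and in the construction of LS$(3,4,2^r;\mu)$ from~\cite{Etz96}; the present statement is a corollary, and I do not expect any obstacle beyond invoking these two results and noting the $r=2$ boundary case separately.
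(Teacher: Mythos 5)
Your proposal is correct and follows essentially the route the paper itself indicates: the paper presents this theorem as an application of Theorem~\ref{thm:LSH}, with the input LS$(3,4,2^r;\mu)$ for $r\geq 3$, $\mu\geq 2$ taken from~\cite{Etz96} and the OA$(3,4,g)$ supplied exactly as in Theorem~\ref{thm:t_t+1OA}. The only caveat is your $r=2$ case, where the LS$(3,4,4;g)$ necessarily repeats the unique SQS$(4)$ $g$ times; since the paper's own count of $\mu\binom{n-t}{k-t}$ systems forces such repetition for $n=4$ and Corollary~\ref{cor:LH4_10} explicitly includes $n=4$, this is consistent with the intended definitions and is not a gap.
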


Theorem~\ref{thm:LHpower 2} was proved in~\cite{Etz96} without the explicit use of orthogonal
arrays. The orthogonal arrays required to obtain this theorem and the ones which will be heavily used
in our exposition are presented in the following trivial well-known theorem.

\begin{theorem}
\label{thm:t_t+1OA}
For any integer $t \geq 2$ and for any given $g \geq 2$, there exists an OA$(t-1,t,g)$.
\end{theorem}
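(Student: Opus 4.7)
The plan is to exhibit an explicit construction of an OA$(t-1,t,g)$ whose last column is determined from the first $t-1$ by a single linear equation over $\Z_g$, and then check the orthogonal array property directly.

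More precisely, I would index the $g^{t-1}$ rows of the array $C$ by vectors $(x_1,x_2,\ldots,x_{t-1}) \in \Z_g^{t-1}$, and set the row associated with such a vector to be
\[
(x_1,x_2,\ldots,x_{t-1},x_t), \qquad x_t \deff -\sum_{i=1}^{t-1} x_i \pmod{g}.
\]
This gives a $g^{t-1}\times t$ matrix over $\Z_g$ by construction, so only the orthogonality property needs verification.

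To verify that every submatrix on $t-1$ columns contains each $(t-1)$-tuple of $\Z_g$ exactly once, I would split into two cases. If the chosen $t-1$ columns are exactly the first $t-1$ columns, then by construction each $(t-1)$-tuple $(x_1,\ldots,x_{t-1})\in\Z_g^{t-1}$ occurs as a row exactly once. Otherwise, the selected columns omit some column $i \in \{1,2,\ldots,t-1\}$ and include column $t$. Given any prescribed values for the entries in the selected columns, the values of $x_j$ for $j \neq i$, $j \leq t-1$ are prescribed, and the value of $x_t$ is prescribed; since $x_t + \sum_{j=1}^{t-1} x_j = 0 \pmod g$, the missing entry $x_i$ is uniquely determined as $x_i = -x_t - \sum_{j\neq i,\, j\leq t-1} x_j \pmod g$. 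This yields exactly one row of $C$ realizing the prescribed $(t-1)$-tuple, as required.

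Since the total row count $g^{t-1}$ equals the number of $(t-1)$-tuples over $\Z_g$, the ``exactly once'' condition follows from uniqueness in both cases. There is no real obstacle here, the construction being the standard ``parity check'' or ``zero-sum'' orthogonal array, and the only thing to be careful about is to note that additive inverses and the equation $\sum_{i=1}^{t} x_i = 0$ make sense in $\Z_g$ for arbitrary $g \geq 2$, so no field assumption on $g$ is needed.
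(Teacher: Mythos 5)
Your construction is exactly the paper's: the rows are the $t$-tuples over $\Z_g$ summing to zero modulo $g$, and orthogonality follows because any $t-1$ coordinates determine the remaining one. The proof is correct and takes essentially the same approach as the paper, just spelled out in slightly more detail on the case analysis.
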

\begin{proof}
Define the following set of $t$-tuples
$$
M \triangleq \{ (x_1,x_2,\ldots, x_t) ~:~  x_i \in \Z_g,~ \sum_{i=1}^t x_i \equiv 0~(\text{mod}~g)   \}~.
$$
Consider the set $M$ as an array with $t$ columns with any order of the rows by the $t$-tuples of $M$.
Clearly, each $(t-1)$-tuple appears exactly once in the projection of $t-1$ coordinates of~$M$,
which implies that the array~$M$ is an OA$(t-1,t,g)$.
\end{proof}
Other orthogonal arrays can be applies of other LS$(t,k,n;\mu)$ presented in our expositions.
These can be found in the extensive literature starting with the book of Raghavarao~\cite{Rag71}.

\subsection{One-Factorizations}
\label{sec:oneF}

A \emph{one-factorization} $F=\{F_0,F_1,\ldots,F_{v-2} \}$ of the complete graph $K_v$, where $v$ is an even positive integer,
is partition of all the edges of $K_v$ into perfect matchings. Each $F_i$, $0 \leq i \leq v-2$, is a perfect matching of $K_v$.
This perfect matching is also called a \emph{one-factor}. As mentioned before a one-factor is a Steiner system $S(1,2,v)$
and a one-factorization is a related large set.
Clearly, a one-factor contains $\frac{v}{2}$ pairs of vertices whose
union is the set of $v$ vertices in $K_v$. One-factorizations were used as building blocks in many constructions
of various block designs.

\subsection{Arrays of Permutations}
\label{sec:perm_arrays}

A permutation acting on the point set of a Steiner system S$(t,k,n)$ yields another Steiner system S$(t,k,n)$.
This is the obvious motivation to use arrays of permutations in constructions of large sets of Steiner systems
with multiplicity and large sets of H-designs.

A $v \times v$ \emph{Latin square} is a $v \times v$ array in which each row and each column
is a permutation of a $v$-set $Q$.
A~Latin square has no $2 \times 2$ subsquares if any
$2 \times 2$ subsquare restricted to two rows and two columns does not form a Latin square~\cite{KLR75}.
Such Latin squares were used in a doubling construction~\cite{Lin77} to form a set of pairwise disjoint SQSs.
The DLS Construction, which will be presented in Section~\ref{sec:review},
is based on the Construction in~\cite{Lin77}, but the constructions which
will be given in Section~\ref{sec:five} and Section~\ref{sec:mainC} do not require such squares.
We will make use of combinatorial designs called \emph{ordered designs} and \emph{perpendicular arrays}.

An \emph{ordered design} OD$_\lambda (k,\ell,n)$ is a $\lambda \cdot \binom{n}{k} \cdot k! \times \ell$
matrix $A$ with entries from an $n$-set, say~$\Z_n$, such that
\begin{enumerate}
\item each row has $\ell$ distinct entries and

\item each submatrix of $A$ which consists of any $k$ columns contains each $k$-tuple of $\Z_n$ exactly $\lambda$ times.
\end{enumerate}

A \emph{perpendicular array} PA$_\lambda (k,\ell,n)$ is a $\lambda \cdot \binom{n}{k} \times \ell$
matrix $A$ with entries from an $n$-set, say~$\Z_n$, such that
\begin{enumerate}
\item each row has $\ell$ distinct entries and

\item each submatrix of $A$ which consists of any $k$ columns contains each $k$-subset of $\Z_n$ exactly $\lambda$ times.
\end{enumerate}

Although probably known before, the first formal definition of perpendicular arrays
is given in~\cite{MSRV80}. Ordered designs were defined first by Teirlinck~\cite{Tei87,Tei88,Tei89}.
Perpendicular arrays have found applications in authentication and secrecy codes~\cite{Sti90}.
This has motivated an extensive research, e.g.~\cite{BBE96,Bie92,Bie95,BiEd94,BiEd99,BiTr91,BiTr91a,GeZh96,GMR88,MSRV80,KWMT91,MaSa05}.
Some of the construction which follows will make use of ordered designs and perpendicular arrays
which can also be regarded as permutation sets.

%

An ordered design $OD_\lambda (k,n,n)$ or a perpendicular array $PA_\lambda (k,n,n)$
consists of a set of permutations from $S_n$, the set of all permutations on an $n$-set.
A set $P \subseteq S_n$ of permutations is (uniformly) \emph{$k$-homogeneous} if it is a $PA_\lambda (k,n,n)$;
it is \emph{$k$-transitive} if it is an $OD_\lambda (k,n,n)$. These are the combinatorial designs
required for the constructions in the sequel.

\vspace{0.2cm}

A few types of ordered designs $OD_\lambda (k,n,n)$ and perpendicular arrays $PA_\lambda (k,n,n)$ are required, e.g.
those with $k=2$ or those with $k>2$ and small $n$. For $k >3$ these permutations sets are required for a
simple construction of an LS$(t,k,n;\mu)$ from a given Steiner system S$(t,k,n)$.
This construction is presented in the following theorem.

\begin{theorem}
\label{thm:perm_LS}
If there exists a Steiner system S$(t,k,n)$ and a perpendicular array PA$_\lambda (k,n,n)$, then there
exist an LS$(t,k,n;\mu)$, where $\mu = \lambda \binom{n}{t} {\Huge /} \binom{k}{t}$.
\end{theorem}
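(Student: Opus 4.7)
The plan is to treat each row of the perpendicular array as a permutation of the point set, apply it to the given Steiner system to generate a new Steiner system, and count the resulting covering multiplicity via the defining property of the PA. Identifying the $n$-set $Q$ with $\Z_n$, each of the $\lambda\binom{n}{k}$ rows of the PA$_\lambda(k,n,n)$ is a length-$n$ vector of distinct entries from $\Z_n$, hence a permutation $\pi$ of $Q$. For each such $\pi$, the pair $(Q,\pi(B)):=(Q,\{\pi(B'):B'\in B\})$ is again an S$(t,k,n)$ on $Q$, because applying a bijection preserves the property that every $t$-subset lies in a unique block. This produces a family $\cS$ of $\lambda\binom{n}{k}$ Steiner systems on $Q$.

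Next I would fix a $k$-subset $K\subseteq Q$ and count how many members of $\cS$ contain $K$. A system $\pi(B)$ contains $K$ iff some block $B'\in B$ satisfies $\pi(B')=K$, and since $\pi$ is a bijection, such a $B'$ is then uniquely $\pi^{-1}(K)$. Hence the desired count equals
$$
\sum_{B'\in B}\,\#\bigl\{\pi:\pi(B')=K\bigr\}.
$$
For each fixed block $B'=\{b_1,\dots,b_k\}$, the PA defining property applied to the $k$ columns indexed by $b_1,\dots,b_k$ asserts that exactly $\lambda$ rows of the PA have their entries in those columns forming the subset $K$. Thus each block contributes $\lambda$ to the sum. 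Since an S$(t,k,n)$ has $\binom{n}{t}/\binom{k}{t}$ blocks, the multiplicity of $K$ in $\cS$ is exactly $\lambda\binom{n}{t}/\binom{k}{t}=\mu$, as required.

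There is no genuine obstacle: the argument is a clean double count, and the only point worth watching is that the PA indexes $k$-subsets (not ordered $k$-tuples) in any $k$ columns, which matches exactly with counting unordered images $\pi(B')=K$, so no spurious factor of $k!$ creeps in. As a sanity check, the cardinality $|\cS|=\lambda\binom{n}{k}$ equals $\mu\binom{n-t}{k-t}$ by the identity $\binom{n}{t}\binom{n-t}{k-t}=\binom{n}{k}\binom{k}{t}$, matching the prescribed size of an LS$(t,k,n;\mu)$.
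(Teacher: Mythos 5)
Your proposal is correct and follows exactly the paper's approach: interpret each row of the PA$_\lambda(k,n,n)$ as a permutation, apply it to the Steiner system, and use the defining property of the perpendicular array on the $k$ columns indexed by a block to see that each $k$-subset arises $\lambda$ times per block, hence $\lambda\binom{n}{t}/\binom{k}{t}$ times in total. Your write-up simply makes explicit the double count and the cardinality check that the paper leaves implicit.
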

\begin{proof}
By applying all the permutations of a perpendicular array PA$_\lambda (k,n,n)$ on each block of
a Steiner system S$(t,k,n)$, each $k$-subset of the $n$-set is produced exactly $\lambda$ times
and hence the claim follows.
\end{proof}

Unfortunately, if $k > 3$ and $n \geq 25$ there is no subgroup of $S_n$ which forms such a permutation set,
except for the group of all permutation $S_n$ and the alternating group $A_n$
which contains all the even permutation (see~\cite[Theorem 5.2]{Cam95}).
A probabilistic proof for the existence of such a permutation set with a
smaller number of permutations is given~\cite{KLP17}. A probabilistic construction with even a smaller number
of permutations is given in~\cite{FPY15}. In both cases the number of permutations is still too large
and there is no concrete construction, rather than an existence proof or
a construction based on probabilistic arguments. For $n \leq 24$ there are some interesting designs which yield
some interesting large sets. These constructions will be considered in Section~\ref{sec:large}
when large sets with small parameters will be discussed.

For $k=2$, PA$_\lambda (2,n,n)$ will be used in our main constructions instead of Latin squares
which were used in other constructions~\cite{Etz96,EtZh20,Lin77} (also OD$_\lambda (2,n,n)$ can be
used for this purpose). Some parameters for these designs are given in the following theorem.

\begin{theorem}
\label{thm:PA_pairs}
The following perpendicular arrays exist:
\begin{enumerate}
\item PA$_2 (2,q,q)$, $q$ power of 2~\cite{Rao61}.
\item PA$_1 (2,q,q)$, $q$ odd prime power~\cite{Rao61}.
\item PA$_2 (2,10,10)$~\cite{BBE96}.
\end{enumerate}
\end{theorem}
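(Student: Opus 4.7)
The plan is to use the reformulation of a PA$_\lambda(2,n,n)$ as a set $P$ of $\lambda\binom{n}{2}$ permutations of an $n$-set such that, for every pair of column indices $i\ne j$ and every unordered pair $\{u,v\}$ of distinct points, exactly $\lambda$ members $\pi\in P$ satisfy $\{\pi(i),\pi(j)\}=\{u,v\}$. Parts~(1) and~(2) are handled uniformly by taking $P$ inside the affine group $\mathrm{AGL}(1,q)=\{x\mapsto ax+b:a\in\F_q^*,\,b\in\F_q\}$ acting on $\F_q$ and exploiting its sharp $2$-transitivity; part~(3) has no such clean group model and needs a separate argument.

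For part~(1), with $q$ a power of $2$, take $P=\mathrm{AGL}(1,q)$. Then $|P|=q(q-1)=2\binom{q}{2}$. By sharp $2$-transitivity, for every ordered pair of columns $(i,j)$ and every ordered pair $(u,v)$ with $u\ne v$ there is exactly one $\pi\in P$ with $\pi(i)=u$ and $\pi(j)=v$. Adding the two orderings of $\{u,v\}$ shows that the unordered pair $\{u,v\}$ is realised exactly twice in columns $(i,j)$, so $P$ is a PA$_2(2,q,q)$.

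For part~(2), with $q$ an odd prime power, I would halve this construction. The two elements of $\mathrm{AGL}(1,q)$ sending $\{i,j\}$ to $\{u,v\}$ have slopes $a=(v-u)/(j-i)$ and $-a$, which are distinct since $q$ is odd. Choose any set $S\subseteq\F_q^*$ containing exactly one element from each pair $\{c,-c\}$, so $|S|=(q-1)/2$ and $\F_q^*=S\cup(-S)$ is a disjoint union. Then $P=\{x\mapsto ax+b:a\in S,\,b\in\F_q\}$ has $q(q-1)/2=\binom{q}{2}$ elements, and exactly one of the two required slopes lies in $S$, yielding a PA$_1(2,q,q)$. When $q\equiv 3\pmod 4$ the nonzero squares form such a transversal and $P$ is in fact a subgroup; when $q\equiv 1\pmod 4$ the squares do not suffice, but any section of $\F_q^*/\{\pm1\}$ still does, and the PA property does not actually require $P$ to be a group.

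Part~(3) is the step I expect to be the main obstacle, since no sharply $2$-transitive group acts on $10$ points. One natural attempt is to exploit the sharply $3$-transitive action of $\mathrm{PGL}(2,9)\cong\mathrm{M}_{10}$ on the $10$-point projective line over $\F_9$: its $720$ permutations cover every unordered pair of points in any two fixed columns exactly $720\cdot 2/(10\cdot 9)=16$ times, so one needs to isolate a sub-collection of $90$ permutations whose pair-coverage drops uniformly to~$2$. Producing such a sub-collection, either through a carefully chosen union of cosets of an index-$8$ subset of the group or via the direct combinatorial construction of~\cite{BBE96}, is the non-trivial step; once a candidate $90$-set is in hand, verification reduces to a finite check of pair frequencies over the $\binom{10}{2}=45$ pairs of columns.
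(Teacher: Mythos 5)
The paper offers no proof of this theorem at all: it is stated as a catalogue of known designs, with parts (1)--(2) attributed to Rao and part (3) to Bierbrauer, Black and Edel. Your constructions for (1) and (2) are correct and are, in substance, exactly the constructions behind the Rao citation: $\mathrm{AGL}(1,q)$ is sharply $2$-transitive, so it realises every ordered pair once and every unordered pair twice in any two columns, giving PA$_2(2,q,q)$; and for odd $q$ the two affine maps carrying $\{i,j\}$ to $\{u,v\}$ have slopes $a$ and $-a$ with $a\neq -a$, so restricting the slope to a transversal of $\{\pm 1\}$ in $\F_q^*$ halves the array to a PA$_1(2,q,q)$. Your observation that the halving genuinely fails in characteristic $2$ (there the two maps share the same slope and differ only in the translation part) correctly explains why part (1) cannot be improved to $\lambda=1$ by this route.

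Part (3), however, is not proved: you correctly compute that a sharply $3$-transitive group of order $720$ on $10$ points covers each unordered pair $16$ times per column pair and that a $90$-permutation subset with uniform coverage $2$ is what is needed, but you do not exhibit such a subset, and no general principle you invoke guarantees that one exists. Since $10$ is not a prime power there is no sharply $2$-transitive group available, and the existence of a PA$_2(2,10,10)$ is a genuinely nontrivial fact that in the literature rests on an explicit construction (this is precisely why the paper cites \cite{BBE96} for this case and only this case). As written, your argument for (3) is a reduction of the problem to a finite search, not a solution of it. A small side error: $\mathrm{PGL}(2,9)$ and $M_{10}$ are both sharply $3$-transitive of order $720$ on $10$ points, but they are \emph{not} isomorphic (they are two of the three index-$2$ subgroups of $\mathrm{P\Gamma L}(2,9)$ above $\mathrm{PSL}(2,9)$); this does not affect your argument, but the claimed isomorphism should be deleted.
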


For $k>2$ and small $n$, perpendicular arrays and ordered designs will be used in Section~\ref{sec:large} to
generate LS$(t,k,n;\mu)$ for small values of $n$. Related perpendicular arrays and ordered designs are given in the
following theorem.

\begin{theorem}
\label{thm:PA_nopairs}
The following perpendicular arrays and ordered designs exist:
\begin{enumerate}
\item PA$_3 (3,7,7)$~\cite{BBE96}.
\item OD$_1 (4,11,11)$~\cite{Bie06}.
\end{enumerate}
\end{theorem}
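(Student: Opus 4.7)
Since both parts of the theorem merely assert the existence of small combinatorial designs with specific parameters, my plan is to exhibit each one concretely rather than to derive it from a general construction.

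For part (2), the ordered design OD$_1(4,11,11)$, I would invoke the Mathieu group $M_{11}$. This group acts sharply 4-transitively on an 11-element set and has order exactly $11 \cdot 10 \cdot 9 \cdot 8 = 7920$, which matches the required row count $\binom{11}{4} \cdot 4! = 7920$. Listing the elements of $M_{11}$ in any faithful degree-11 representation as the rows of a $7920 \times 11$ array therefore automatically produces each ordered 4-tuple of distinct entries from $\Z_{11}$ exactly once in every projection onto 4 columns, which is precisely the defining property of OD$_1(4,11,11)$. The remaining work is to fix such a representation explicitly; one standard route is via the point-stabilizer in $M_{12}$, or equivalently via the Steiner system $S(4,5,11)$ on which $M_{11}$ acts as the full automorphism group.

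For part (1), the perpendicular array PA$_3(3,7,7)$, no subgroup of $S_7$ has order $3 \binom{7}{3} = 105$, so the design cannot come from a single group action. Instead I would look for the array as a union of $C_7$-orbits: since $105 = 7 \cdot 15$, it suffices to exhibit $15$ base permutations of $\{0,1,\ldots,6\}$ whose cyclic shifts together realize every unordered triple exactly three times in every triple of columns. A natural way to generate candidates is to combine base blocks derived from the Fano plane $S(2,3,7)$ with suitable coset representatives of $\mathrm{PSL}(3,2)$ (the automorphism group of the Fano plane, of order $168$, which is 2-transitive but not 3-homogeneous on the $7$ points); alternatively one can perform a finite computer search, which is the approach of~\cite{BBE96}. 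Verification is then routine: one checks for each of the $\binom{7}{3} = 35$ column triples that every $3$-subset of $\{0,1,\ldots,6\}$ appears exactly three times.

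The main obstacle is part (1). Unlike $M_{11}$, which hands us part (2) essentially for free, no convenient transitive group on $7$ points has exactly the right order, so one must either find an inspired orbit decomposition by hand or carry out a finite computational search as in~\cite{BBE96}. Part (2), by contrast, is essentially the well-known sharp $4$-transitivity of $M_{11}$ in disguise, and needs no further work once the group action is written down.
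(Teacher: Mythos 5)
Your proposal is correct, and it is worth noting that the paper itself offers no proof of this theorem at all: both items are imported from the literature, with PA$_3(3,7,7)$ cited to~\cite{BBE96} and OD$_1(4,11,11)$ to~\cite{Bie06}. For part (2) your argument via the sharp $4$-transitivity of the Mathieu group $M_{11}$ is complete and is exactly the construction lying behind the citation: the order count $|M_{11}| = 11\cdot 10\cdot 9\cdot 8 = 7920 = \binom{11}{4}\cdot 4!$ together with sharp $4$-transitivity gives each ordered $4$-tuple exactly once in every choice of $4$ columns, which is the definition of OD$_1(4,11,11)$; this matches the paper's own remark that a $k$-transitive permutation set is an OD$_\lambda(k,n,n)$. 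For part (1) your observations are accurate (no subgroup of $S_7$ has order $105$, since such a group would contain an element of order $35$ and $S_7$ does not; and $\mathrm{PSL}(3,2)$ splits the $35$ triples into orbits of sizes $7$ and $28$, so it is not $3$-homogeneous), but you do not actually produce the $105$ permutations --- you describe a search strategy and then defer to~\cite{BBE96}, which is precisely what the paper does. So the net effect is that you have genuinely proved part (2) where the paper merely cites, while part (1) remains a citation in both treatments; neither is a gap relative to the paper's own standard of proof for this statement.
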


For more information on parameters of perpendicular arrays and ordered designs
the reader is referred to the short survey in~\cite{Bie06}.

\subsection{Configurations}
\label{sec:config}

Most of the constructions which will be described in the sequel
and those which were already mentioned (e.g. see Theorem~\ref{thm:LSH}),
are based on partitions of the point set.
Most of our partitions will be with parts of equal size, unless when there are only two parts,
where the two parts might not be of equal size.

Assume that the point set $Q$ has size $n$.
This set can be partitioned into two subsets
$A$ and~$B$, where $|A|= \ell$ and $|B|=n-\ell$. Given such a partition and a $k$-subset
$X = \{ x_1,x_2,\ldots,x_k \}$ of $Q$, we say that $X$ is a $k$-subset from \emph{configuration} $(i,j)$, where $i+j=k$, if
$|X \cap A|=i$ and $|X \cap B|=j$. The definition of configuration is generalized for a partition of the point set
into more than two parts. For example, if $Q$ is partitioned into four sets $A_1$, $A_2$, $A_3$, and $A_4$ (usually
of equal size), then
the $k$-subset $X$ is from configuration $(i_1,i_2,i_3,i_4)$, where $i_1+i_2+i_3+i_4=k$, if
$|X \cap A_j|=i_j$ for $1 \leq j \leq 4$.

The partition of the point set has a few advantages such as applying certain permutations
on one set of points, or considering each configuration separately makes it easier to
analyse the structure of the $k$-subsets.

\section{New Large Sets with Small Parameters}
\label{sec:large}

In this section we present some basic constructions for large sets with multiplicity and
for large sets of H-designs. The number of groups in these constructions of H-designs
is small and the same is true for the number of points in the large sets with multiplicity.
In Section~\ref{sec:small} large sets of H-designs with small
parameters are obtained by ad-hoc constructions.
In Section~\ref{sec:large_permute} large sets with multiplicity and small parameters are obtained
by applying sets of permutations on the coordinates of some
Steiner systems. These large sets with multiplicity imply related large sets of H-designs.

\subsection{Large Sets of H-designs with Small Number of Groups}
\label{sec:small}

This section is devoted to a few ad-hoc constructions presented in the proofs
of the next three lemmas.

\begin{lemma}
\label{lem:LH5}
There exists an LH$(5,4u,4,3)$ for any positive integer $u$.
\end{lemma}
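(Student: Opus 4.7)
The plan is to split the argument into a base case $u=1$ plus a bootstrap. For $u\geq 2$, Theorem~\ref{thm:t_t+1OA} (with $t=4$ and $g=u$) supplies an OA$(3,4,u)$, and feeding this together with an LH$(5,4,4,3)$ into Theorem~\ref{thm:expand} (with $n=5$, $g=4$, $k=4$, $t=3$) immediately produces an LH$(5,4u,4,3)$. The case $u=1$ is the base case itself. So the whole statement reduces to exhibiting one LH$(5,4,4,3)$.

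For that base case, the direct recipe of Theorem~\ref{thm:LSH} is unavailable: it would require an LS$(3,4,5;\mu)$, but no S$(3,4,5)$ can exist since $\binom{5}{3}/\binom{4}{3}=5/2$ is not an integer, so no LS$(3,4,5;\mu)$ exists either. Working on $Q=\Z_5\times\Z_4$ with groups $G_i=\{i\}\times\Z_4$, Lemma~\ref{lem:LSH_size} says we need precisely eight pairwise-disjoint H-designs $H_0,\ldots,H_7$, each containing $160$ blocks, that together partition the $\binom{5}{4}\cdot 4^4=1280$ transversal $4$-subsets of $Q$. A natural route is to choose a starter H-design $H_0$ together with an order-$8$ group $\Gamma$ of automorphisms of $Q$ that permutes the groups among themselves (for example a $\Z_4\times\Z_2$ acting on the second coordinate via translations possibly twisted by some $\Z_5$-indexed function), and to arrange that the orbit $\{\gamma H_0 : \gamma\in\Gamma\}$ consists of eight disjoint H-designs whose blocks exhaust the $1280$ transversal $4$-subsets. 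Equivalently, one can encode the H-design by a function that, for each $3$-subset of groups and each triple of points chosen from them, specifies which of the two remaining groups contributes the fourth coordinate and what that coordinate is; the task is to design such a function invariant up to an $8$-element shift.

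The main obstacle is the explicit construction and verification of this base case. Since $n=5$ lies outside the $n=2^r$ family handled by Theorem~\ref{thm:LHpower 2} and the usual reduction through Theorem~\ref{thm:LSH} is blocked by the nonexistence of S$(3,4,5)$, LH$(5,4,4,3)$ cannot be derived from the general machinery developed earlier in the paper; it must be produced by a concrete small combinatorial construction (possibly aided by a computer search) and then checked by hand against the H-design and disjointness conditions on the $1280$ transversal quadruples. Once the base case is in hand, the bootstrap via Theorem~\ref{thm:expand} and Theorem~\ref{thm:t_t+1OA} is entirely routine.
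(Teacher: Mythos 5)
Your reduction is sound and matches the paper's final step: Theorem~\ref{thm:t_t+1OA} with $t=4$, $g=u$ gives an OA$(3,4,u)$, and Theorem~\ref{thm:expand} then turns an LH$(5,4,4,3)$ into an LH$(5,4u,4,3)$; your counts ($8$ designs, $160$ blocks each, $1280$ transversal quadruples) are also correct, as is your observation that Theorem~\ref{thm:LSH} is unavailable here because no S$(3,4,5)$ exists. But the proposal stops exactly where the lemma's actual content begins. The entire substance of the paper's proof is the explicit base object: a concrete H$(5,4,4,3)$ on $\Z_8\cup\{8,\ldots,19\}$ with groups $\{0,2,4,6\}$, $\{1,3,5,7\}$, and $\{i,i+3,i+6,i+9\}$ for $i=8,9,10$, given as a list of $160$ blocks, whose orbit under the order-$8$ translation group $(\Z_8,+)$ acting only on the points of $\Z_8$ (and fixing $\{8,\ldots,19\}$ pointwise) yields the eight pairwise disjoint H-designs. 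You correctly anticipate that some such starter-plus-automorphism construction is needed, but saying that it ``must be produced by a concrete small combinatorial construction (possibly aided by a computer search) and then checked'' is a statement of the problem, not a proof. Without exhibiting the starter design (or some other verifiable existence argument for LH$(5,4,4,3)$), the lemma is not established.

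A secondary, minor point: your proposed automorphism action ($\Z_4\times\Z_2$ on the second coordinate of $\Z_5\times\Z_4$, permuting all five groups among themselves) differs structurally from what the paper does; the paper's $\Z_8$ acts on the union of just two of the five groups, swapping them under odd translations and fixing the other three groups pointwise. This does not affect the verdict, but it illustrates that the choice of group action is itself part of the nontrivial design work you have deferred.
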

\begin{proof}
We construct an H$(5,4,4,3)$ on the point set $\mathbb{Z}_{8}\cup\{8,9,\ldots,19\}$ with group set $\{\{0,2,4,6\},\{1,3,5,7\}\}\cup\{\{i,i+3,i+6,i+9\}:i=8,9,10\}$. Its block set $\B_0$ is as follows:
\begin{center}
{
\small
\begin{tabular}{lllllllllll}
$\{0,1,8,9\}$\ \ & $\{2,5,8,9\}$\ \ & $\{3,8,9,10\}$\ \ & $\{4,8,9,13\}$\ \ & $\{6,8,9,16\}$\ \ & $\{7,8,9,19\}$\\
$\{7,0,8,10\}$\ \ & $\{1,4,8,10\}$\ \ & $\{2,8,10,12\}$\ \ & $\{5,8,10,15\}$\ \ & $\{6,8,10,18\}$\ \ & $\{5,6,8,13\}$\\
$\{6,7,8,15\}$\ \ & $\{0,3,8,15\}$\ \ & $\{7,2,8,13$\}\ \ & $\{1,8,13,15\}$\ \ & $\{0,8,13,18\}$\ \ & $\{3,8,12,13\}$\\
$\{4,5,8,12\}$\ \ & $\{7,8,12,16\}$\ \ & $\{4,7,8,18\}$\ \ & $\{0,8,12,19\}$\ \ & $\{5,8,18,19\}$\ \ & $\{5,0,8,16\}$\\
$\{6,1,8,12\}$\ \ & $\{1,2,8,19\}$\ \ & $\{1,8,16,18\}$\ \ & $\{2,3,8,18\}$\ \ & $\{2,8,15,16\}$\ \ & $\{3,4,8,16\}$\\
$\{3,6,8,19\}$\ \ & $\{4,8,15,19\}$\ \ & $\{0,1,11,13\}$\ \ & $\{7,0,12,13\}$\ \ & $\{2,5,12,13\}$\ \ & $\{4,7,11,13\}$\\
$\{6,11,12,13\}$\ \ & $\{1,12,13,14\}$\ \ & $\{4,12,13,17\}$\ \ & $\{1,4,13,18\}$\ \ & $\{6,7,13,18\}$\ \ & $\{1,2,9,13\}$\\
$\{2,3,13,17\}$\ \ & $\{5,13,17,18\}$\ \ & $\{6,1,13,17\}$\ \ & $\{0,3,9,13\}$\ \ & $\{0,13,15,17\}$\ \ & $\{3,6,13,15\}$\\
$\{5,0,13,14\}$\ \ & $\{5,9,11,13\}$\ \ & $\{4,5,13,15\}$\ \ & $\{3,4,13,14\}$\ \ & $\{3,11,13,18\}$\ \ & $\{2,11,13,15\}$\\
$\{7,13,14,15\}$\ \ & $\{6,9,13,14\}$\ \ & $\{2,13,14,18\}$\ \ & $\{7,9,13,17\}$\ \ & $\{5,0,15,19\}$\ \ & $\{2,3,15,19\}$\\
$\{5,0,9,10\}$\ \ & $\{5,0,12,17\}$\ \ & $\{5,0,11,18\}$\ \ & $\{1,2,11,18\}$\ \ & $\{6,7,9,10\}$\ \ & $\{0,1,10,12\}$\\
$\{0,1,14,15\}$\ \ & $\{0,1,16,17\}$\ \ & $\{0,1,18,19\}$\ \ & $\{7,2\ 18,19\}$\ \ & $\{0,9,17,19\}$\ \ & $\{7,0,17,18\}$\\
$\{3,6,17,18\}$\ \ & $\{5,6,9,17\}$\ \ & $\{3,9,16,17\}$\ \ & $\{0,3,10,17\}$\ \ & $\{0,10,14,18\}$\ \ & $\{0,3,16,18\}$\\
$\{3,14,18,19\}$\ \ & $\{0,3,12,14\}$\ \ & $\{0,11,12,16\}$\ \ & $\{7,0,15\ 16\}$\ \ & $\{0,10,11,15\}$\ \ & $\{0,3,11,19\}$\\
$\{7,0,9,11\}$\ \ & $\{7,0,14,19\}$\ \ & $\{0,9,14,16\}$\ \ & $\{4,5,9,16\}$\ \ & $\{5,9,14,19\}$\ \ & $\{2,5,17,19\}$\\
$\{5,15,16,17\}$\ \ & $\{5,6,11,15\}$\ \ & $\{2,5,11,16\}$\ \ & $\{1,9,11,16\}$\ \ & $\{3,6,9,11\}$\ \ & $\{6,1,10,11\}$\\
$\{6,1,15,16\}$\ \ & $\{6,1,9,19\}$\ \ & $\{1,4,14,19\}$\ \ & $\{1,9,10,14\}$\ \ & $\{1,4,9,17\}$\ \ & $\{1,4,11,15\}$\\
$\{1,4,12,16\}$\ \ & $\{1,2,12,17\}$\ \ & $\{3,4,15,17\}$\ \ & $\{3,10,14,15\}$\ \ & $\{2,3,10,11\}$\ \ & $\{2,3,12,16\}$\\
$\{3,6,14,16\}$\ \ & $\{5,12,14,16\}$\ \ & $\{6,7,12,14\}$\ \ & $\{5,6,10,14\}$\ \ & $\{5,6,16,18\}$\ \ & $\{6,7,11,16\}$\\
$\{6,7,17,19\}$\ \ & $\{5,6,12,19\}$\ \ & $\{5,10,11,12\}$\ \ & $\{7,10,11,18\}$\ \ & $\{1,2,10,15\}$\ \ & $\{1,10,17,18\}$\\
$\{2,9,10,17\}$\ \ & $\{1,2,14,16\}$\ \ & $\{6,1,14,18\}$\ \ & $\{1,11,12,19\}$\ \ & $\{7,2,11,12\}$\ \ & $\{1,15,17,19\}$\\
$\{2,3,9,14\}$\ \ & $\{4,7,9,14\}$\ \ & $\{4,10,12,14\}$\ \ & $\{7,10,12,17\}$\ \ & $\{7,2,15,17\}$\ \ & $\{2,16,17,18\}$\\
$\{2,5,10,18\}$\ \ & $\{3,4,10,18\}$\ \ & $\{2,5,14,15\}$\ \ & $\{7,2,9,16\}$\ \ & $\{7,2,10,14\}$\ \ & $\{2,9,11,19\}$\\
$\{2,12,14,19\}$\ \ & $\{3,4,9,19\}$\ \ & $\{3,4,11,12\}$\ \ & $\{3,6,10,12\}$\ \ & $\{3,11,15,16\}$\ \ & $\{3,12,17,19\}$\\
$\{4,5,10,17\}$\ \ & $\{4,5,11,19\}$\ \ & $\{4,17,18,19\}$\ \ & $\{4,5,14,18\}$\ \ & $\{4,11,16,18\}$\ \ & $\{4,7,10,15\}$\\
$\{4,7,12,19\}$\ \ & $\{4,7,16,17\}$\ \ & $\{4,9,10,11\}$\ \ & $\{4,14,15,16\}$\ \ & $\{6,10,15,17\}$\ \ & $\{6,11,18,19\}$\\
$\{6,12,16,17\}$\ \ & $\{6,14,15,19\}$\ \ & $\{7,11,15,19\}$\ \ & $\{7,14,16,18\}$
\end{tabular}
}
\end{center}
We apply to $\B_0$ the automorphism group $(\mathbb{Z}_8,+)$ (only to the points of $\mathbb{Z}_8$,
leaving each one of the points in the set $\{8,9,\ldots,19\}$ unchanged)
to obtain 8 mutually disjoint H$(5,4,4,3)$s, namely an LH$(5,4,4,3)$.
Finally, an LH$(5,4u,4,3)$ for any $u \geq 1$ is obtained by Theorem~\ref{thm:expand}.
\end{proof}

\begin{lemma}
\label{lem:LH6}
There exists an LH$(6,g,4,3)$ if and only if $g$ is divisible by $3$.
\end{lemma}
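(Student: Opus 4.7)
The plan is to split the lemma into its necessity and sufficiency directions, and to reduce sufficiency to a single base case. Necessity follows immediately from Theorem~\ref{thm:necess}: with $n=6$, the condition $g(n-1)(n-2)\equiv 0\pmod 3$ reads $20g\equiv 0\pmod 3$, which forces $3\mid g$; a fortiori, the same divisibility is necessary for an LH$(6,g,4,3)$.

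For sufficiency, the strategy has two steps. Step one is to exhibit a base LH$(6,3,4,3)$. Step two, for an arbitrary positive integer $u$, invokes Theorem~\ref{thm:t_t+1OA} to produce an OA$(3,4,u)$, and then feeds the pair (LH$(6,3,4,3)$, OA$(3,4,u)$) into Theorem~\ref{thm:expand} to obtain an LH$(6,3u,4,3)$. This covers every multiple of $3$ and so matches the necessary condition.

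The heart of the proof therefore lies in the base case. Note that Theorem~\ref{thm:LSH} is unavailable here: a Steiner system S$(3,4,6)$ does not exist, because $\binom{4}{3}$ does not divide $\binom{6}{3}$, so one cannot obtain the large set by combining a Steiner system with an orthogonal array. I would instead proceed by an explicit construction in the spirit of Lemma~\ref{lem:LH5}. On an $18$-point set with a fixed partition into six groups of size three, I would seek an automorphism of the group partition (for instance a cyclic shift of order $9$) together with a single H$(6,3,4,3)$ whose $135$ blocks fall into $15$ orbits of length $9$. The nine images of the base design under the automorphism would then furnish the $\binom{3}{1}\cdot 3 = 9$ H-designs required, and the arithmetic $9\cdot 135 = 1215 = \binom{6}{4}\cdot 3^4$ matches exactly the number of $4$-subsets of $Q$ that meet each group in at most one point. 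Disjointness of the translates plus this counting identity would force them to partition those $4$-subsets, producing the required large set.

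The main obstacle is locating and verifying this base design. As in Lemma~\ref{lem:LH5}, one must display the $15$ orbit representatives explicitly and check, block by block, that every triple drawn from three distinct groups appears in exactly one block and that no orbit collapses to length less than~$9$. No recursive or algebraic tool earlier in the paper reaches the small case $(n,g)=(6,3)$, so the base must be exhibited by hand (or by computer search) and its H-design axioms and orbit structure checked directly; once this is done, the extension to arbitrary $g=3u$ is automatic from Theorems~\ref{thm:expand} and~\ref{thm:t_t+1OA}.
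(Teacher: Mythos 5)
Your overall architecture is exactly the paper's: necessity from Theorem~\ref{thm:necess} (for $n=6$ the condition $g(n-1)(n-2)=20g\equiv 0\pmod 3$ forces $3\mid g$), a hand-built base case LH$(6,3,4,3)$, and the lift to all $g=3u$ via Theorem~\ref{thm:t_t+1OA} and Theorem~\ref{thm:expand}. Your observation that Theorem~\ref{thm:LSH} is unavailable because no S$(3,4,6)$ exists is also correct. The paper does precisely this: it exhibits an explicit H$(6,3,4,3)$ on $\Z_9\cup\{9,\dots,17\}$ with groups $\{i,i+3,i+6\}$ for $i\in\{0,1,2,9,10,11\}$, develops it under $(\Z_9,+)$ acting only on the $\Z_9$-part, and then applies Theorem~\ref{thm:expand}.

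There is, however, a genuine flaw in your description of the base-case mechanism. You ask for a base design ``whose $135$ blocks fall into $15$ orbits of length $9$'' under the order-$9$ automorphism, presented by $15$ orbit representatives. If the block set of the base design were a union of full orbits, the design would be invariant under the automorphism, and its nine images would all coincide with the design itself --- the opposite of the pairwise disjointness you need. The correct requirement is dual to what you wrote: the $1215$ admissible quadruples split into $135$ orbits of length $9$, and the base design $\cB_0$ must contain \emph{exactly one block from each orbit} (equivalently, no block of $\cB_0$ is a nontrivial translate of another block of $\cB_0$). Only then are the nine translates pairwise disjoint, and your counting identity $9\cdot 135=1215=\binom{6}{4}\cdot 3^4$ forces them to partition the admissible quadruples. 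This is why the paper lists all $135$ blocks of $\cB_0$ explicitly rather than $15$ representatives; the verification is that $\cB_0$ is an H$(6,3,4,3)$ and that its $\Z_9$-translates are mutually disjoint. With that correction your argument goes through and coincides with the paper's proof.
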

\begin{proof}
First we construct an H$(6,3,4,3)$ on the point set
$\mathbb{Z}_{9}\cup\{9,10,\ldots,17\}$ with group set $\{\{i,i+3,i+6\}:i=0,1,2,9,10,11\}$. Its block set $\cB_0$ is as follows:
\begin{center}
{
\small
\begin{tabular}{lllllllllll}
$\{0,1,2,9\}$ & $\{8,0,1,10\}$ & $\{0,1,5,11\}$ & $\{0,1,12,14\}$ & $\{0,1,13,15\}$ & $\{0,1,16,17\}$\\
$\{3,5,12,14\}$ & $\{4,8,12,14\}$ & $\{7,10,12,14\}$ & $\{2,12,13,14\}$ & $\{6,12,14,16\}$ & $\{0,2,11,13\}$\\
$\{1,2,13,17\}$ & $\{2,3,4,13\}$ & $\{2,6,9,13\}$ & $\{7,2,13,15\}$ &$\{1,9,11,13\}$ & $\{3,9,13,14\}$\\
$\{4,5,9,13\}$ & $\{7,0,9,13\}$ & $\{8,9,13,17\}$ & $\{5,0,13,17\}$ & $\{4,6,13,17\}$ & $\{3,13,15,17\}$\\
$\{6,8,13,15\}$ & $\{7,12,13,17\}$ & $\{0,4,9,17\}$ & $\{1,9,10,17\}$ & $\{0,4,12,13\}$ & $\{8,0,13,14\}$\\
$\{5,7,13,14\}$ & $\{1,3,5,13\}$  &$\{8,1,12,13\}$ & $\{6,1,13,14\}$ & $\{3,11,12,13\}$ & $\{7,8,3,13\}$\\
$\{6,7,11,13\}$ & $\{4,8,11,13\}$ & $\{4,13,14,15\}$ & $\{5,6,12,13\}$ &$\{5,11,13,15\}$ & $\{8,0,4,15\}$\\
$\{0,4,11,16\}$ & $\{4,5,0,10\}$ & $\{0,2,4,14\}$ & $\{5,0,9,14\}$ & $\{0,10,14,15\}$ & $\{7,0,11,15\}$\\
$\{0,2,15,17\}$ & $\{5,0,15,16\}$ & $\{5,7,0,12\}$ & $\{7,0,14,16\}$ & $\{7,8,0,17\}$ & $\{0,10,12,17\}$\\
$\{7,0,2,10\}$ & $\{0,2,12,16\}$ & $\{8,0,11,12\}$ & $\{0,9,10,11\}$ & $\{8,0,9,16\}$ & $\{1,2,6,16\}$\\
$\{1,2,10,12\}$ & $\{6,7,8,12\}$ & $\{2,3,7,12\}$ & $\{6,7,2,14\}$  & $\{7,2,16,17\}$ & $\{7,2,9,11\}$\\
$\{6,8,9,11\}$ & $\{6,8,10,14\}$ & $\{4,6,9,14\}$ & $\{6,9,16,17\}$ & $\{5,7,9,17\}$ & $\{8,1,3,9\}$\\
$\{8,1,11,16\}$ & $\{8,1,14,15\}$ & $\{5,6,14,15\}$ & $\{5,6,7,16\}$ & $\{6,11,15,16\}$ & $\{8,3,11,15\}$\\
$\{4,10,11,15\}$ & $\{4,6,10,12\}$ & $\{2,4,11,12\}$ & $\{4,5,6,11\}$ & $\{3,4,5,15\}$ & $\{5,7,10,15\}$\\
$\{3,5,7,11\}$ & $\{5,10,11,12\}$ & $\{5,9,11,16\}$ & $\{2,4,9,16\}$ & $\{2,4,10,17\}$ & $\{2,6,12,17\}$\\
$\{6,1,11,12\}$ & $\{1,3,10,11\}$ & $\{7,8,10,11\}$ & $\{4,8,9,10\}$ & $\{3,4,9,11\}$ & $\{1,2,3,14\}$\\
$\{2,9,10,14\}$ & $\{3,5,9,10\}$ & $\{6,7,9,10\}$ & $\{6,7,15,17\}$ & $\{1,5,15,17\}$ & $\{6,8,1,17\}$\\
$\{1,2,11,15\}$ & $\{1,3,12,17\}$ & $\{3,5,16,17\}$ & $\{4,5,12,17\}$ & $\{4,5,14,16\}$ & $\{1,3,15,16\}$\\
$\{2,14,15,16\}$ & $\{5,6,1,9\}$ & $\{1,5,10,14\}$ & $\{1,5,12,16\}$ & $\{8,3,10,12\}$ & $\{3,4,8,17\}$\\
$\{6,1,10,15\}$ & $\{1,9,14,16\}$ & $\{2,3,9,17\}$ & $\{2,3,10,15\}$ & $\{2,3,11,16\}$ & $\{2,4,6,15\}$\\
$\{2,6,10,11\}$ & $\{3,4,10,14\}$ & $\{3,4,12,16\}$ & $\{3,7,9,16\}$ & $\{3,7,10,17\}$ & $\{3,7,14,15\}$\\
$\{8,3,14,16\}$ & $\{4,6,8,16\}$& $\{4,15,16,17\}$ & $\{5,6,10,17\}$ & $\{7,8,9,14\}$ & $\{7,8,15,16\}$\\
$\{7,11,12,16\}$ & $\{8,10,15,1\}$ & $\{8,12,16,17\}$

 \end{tabular}
}
\end{center}
We apply to ${\cal B}_0$ the automorphism group $(\mathbb{Z}_9,+)$ (only to the points of $\mathbb{Z}_9$, leaving each one
of the points in the set $\{9,10,\ldots,17\}$ unchanged)
to obtain 9 mutually disjoint H$(6,3,4,3)$s, namely an  LH$(6,3,4,3)$.
Finally, an LH$(6,g,4,3)$ for any $g$ divisible by 3 is obtained by Theorem~\ref{thm:expand}.
For $g$ which is not divisible by 3, an LH$(6,g,4,3)$
does not exist by Theorem~\ref{thm:necess}.
\end{proof}

\begin{lemma}
\label{lem:LH7}
There exists an LH$(7,g,4,3)$ if and only if $g$ is even.
\end{lemma}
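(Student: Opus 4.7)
The necessity is immediate from Theorem~\ref{thm:necess}: for $n=7$ the condition $gn\equiv 0\pmod 2$ forces $g$ to be even, while $g(n-1)(n-2)=30g\equiv 0\pmod 3$ holds for all $g$, $n=7\geq 4$, and the forbidden pair $(5,2)$ does not apply.

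For the sufficiency the plan is identical in shape to Lemmas~\ref{lem:LH5} and~\ref{lem:LH6}. I would first exhibit a base large set LH$(7,2,4,3)$ explicitly, and then bootstrap to every even $g$ via Theorem~\ref{thm:expand}. Indeed, once an LH$(7,2,4,3)$ is in hand, Theorem~\ref{thm:t_t+1OA} supplies an OA$(3,4,u)$ for every $u\geq 2$ (the case $u=1$ being trivial), and Theorem~\ref{thm:expand} then yields an LH$(7,2u,4,3)$ for every positive integer $u$, covering all even values of $g$. Note that Theorem~\ref{thm:LSH} is not available as a shortcut here, since it would require an LS$(3,4,7;2)$ and hence an S$(3,4,7)$, which does not exist.

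The heart of the proof is therefore the construction of the base LH$(7,2,4,3)$. Following the template of the previous two lemmas, I would take the $14$-point set to be $\mathbb{Z}_8\cup\{8,9,\ldots,13\}$ with the group partition
\[
\{\{i,i+4\}:i=0,1,2,3\}\cup\{\{8,11\},\{9,12\},\{10,13\}\},
\]
so that the four ``movable'' groups form a single $\mathbb{Z}_8$-orbit (stabilized by $4\mathbb{Z}_8$) and the three ``fixed'' groups are untouched by the shift. I would then display an H$(7,2,4,3)$ as an explicit list of $70$ transversal $4$-subsets, arranged into orbits under $\mathbb{Z}_8$, with the property that its $8$ translates under $\mathbb{Z}_8$ (acting only on $\mathbb{Z}_8$ and fixing $\{8,\ldots,13\}$) are pairwise disjoint. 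Since the 8 translates contain $8\cdot 70=560=\binom{7}{4}2^4$ blocks in total, disjointness together with the H-design property on one translate automatically upgrades to a partition of all transversal $4$-subsets into $8$ copies of H$(7,2,4,3)$, i.e., an LH$(7,2,4,3)$.

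The main obstacle is producing this base list: one must select $70$ transversal $4$-subsets that cover each of the $\binom{7}{3}2^3=280$ transversal $3$-subsets exactly once, and simultaneously arrange so that within each $\mathbb{Z}_8$-orbit of transversal $4$-subsets the chosen representatives split cleanly among the eight translates. This is a direct combinatorial search of the same flavor as the tables displayed in Lemmas~\ref{lem:LH5} and~\ref{lem:LH6}, and I would simply record the resulting $\mathbb{Z}_8$-base blocks in an explicit table. No new structural idea is needed; the construction is completed by applying Theorem~\ref{thm:expand} as described above.
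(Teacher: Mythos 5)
Your surrounding architecture is exactly the paper's: necessity follows from Theorem~\ref{thm:necess} (for $n=7$ the condition $gn\equiv 0\pmod 2$ forces $g$ even), and sufficiency reduces to exhibiting one explicit LH$(7,2,4,3)$ and then applying Theorem~\ref{thm:expand} with the OA$(3,4,u)$ of Theorem~\ref{thm:t_t+1OA} to reach every even $g$. Your counting is also right ($70$ blocks per H$(7,2,4,3)$, $560$ transversal $4$-subsets, hence $8$ systems in the large set), and your observation that Theorem~\ref{thm:LSH} cannot be used because S$(3,4,7)$ does not exist is correct.

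The gap is that the entire content of the lemma is the explicit base object, and you do not produce it; you only describe a search frame and assert that ``no new structural idea is needed.'' Moreover, the frame you propose is not the one the paper uses, and it is not verified to admit a solution. The paper builds the H$(7,2,4,3)$ on $\Z_{14}$ with groups $\{i,i+7\}$, $0\le i\le 6$, and obtains the large set not by developing a base design under a cyclic automorphism but by applying $8$ explicitly listed permutations of the $14$ points (an $8\times 14$ array) to the base block set. This is a telling departure from the template of Lemmas~\ref{lem:LH5} and~\ref{lem:LH6}, where cyclic $\Z_8$- and $\Z_9$-developments do work: for $n=7$, $g=2$ the authors evidently did not use a single cyclic orbit structure of the kind you propose ($\Z_8\cup\{8,\dots,13\}$ with groups $\{i,i+4\}$ developed under $\Z_8$). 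Your orbit analysis does show that every transversal $4$-subset has a full $\Z_8$-orbit of size $8$ in your setup, so the scheme is not obviously doomed, but you would still have to find a $70$-block H-design that is simultaneously a transversal of all $70$ orbits, and the existence of such a design is precisely what needs to be demonstrated (by an explicit table, as the paper does) rather than assumed. As written, the proof of the key existence claim is missing.
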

\begin{proof}
First we construct an H$(7,2,4,3)$ on the point set $\Z_{14}$ with groups $\{i,i+7\},~ 0\le i \le 6$.
Its block set $\cB_0$ is as follows:
\begin{center}
{
\small
\begin{tabular}{lllllllllll}
$\{0,1,2,3\}$ & $\{0,1,4,5\}$ & $\{0,1,6,9\}$ & $\{0,1,10,11\}$ & $\{0,1,12,13\}$ & $\{0,2,4,6\}$ & $\{0,2,8,12\}$ \\
$\{0,2,10,13\}$ & $\{0,2,5,11\}$ & $\{0,4,10,12\}$ & $\{0,6,8,10\}$ & $\{0,3,5,6\}$ & $\{0,6,11,12\}$ & $\{0,3,9,12\}$\\
$\{0,5,9,10\}$ & $\{0,5,8,13\}$ & $\{0,3,4,13\}$ & $\{0,3,8,11\}$ & $\{0,9,11,13\}$ & $\{0,4,8,9\}$ & $\{1,2,4,13\}$ \\
$\{2,3,5,13\}$ & $\{2,8,11,13\}$ & $\{2,7,12,13\}$ & $\{2,3,4,12\}$ & $\{2,3,7,8\}$ & $\{2,3,6,11\}$ & $\{2,5,6,8\}$ \\
$\{2,4,8,10\}$ & $\{2,4,5,7\}$ & $\{1,2,5,10\}$ & $\{3,4,5,8\}$ & $\{1,2,6,7\}$ & $\{1,2,11,12\}$ & $\{2,7,10,11\}$ \\
$\{1,5,6,11\}$ & $\{6,9,10,11\}$ & $\{6,7,8,11\}$ & $\{4,6,8,12\}$ & $\{4,7,8,13\}$ & $\{2,6,10,12\}$ & $\{1,4,6,10\}$\\
$\{1,3,6,12\}$ & $\{3,7,11,12\}$ & $\{3,5,9,11\}$ & $\{1,3,5,7\}$ & $\{3,7,9,13\}$ & $\{3,4,6,7\}$ & $\{1,3,4,9\}$ \\
$\{1,4,7,12\}$ & $\{1,9,10,12\}$ & $\{1,5,9,13\}$ & $\{4,5,6,9\}$ & $\{5,7,8,9\}$ & $\{5,8,10,11\}$ & $\{5,7,11,13\}$ \\
$\{4,5,10,13\}$ & $\{8,9,10,13\}$ & $\{4,9,12,13\}$ & $\{1,3,11,13\}$ & $\{1,7,9,11\}$ & $\{1,7,10,13\}$ & $\{3,6,8,9\}$ \\
$\{3,8,12,13\}$ & $\{4,7,9,10\}$ & $\{5,6,7,10\}$ & $\{6,7,9,12\}$ & $\{7,8,10,12\}$ & $\{8,9,11,12\}$ & $\{10,11,12,13\}$
\end{tabular}
}
\end{center}
We apply 8 permutations of the following $8\times 14$ array on these blocks to ${\cal B}_0$ to generate an LH$(7,2,4,3)$.
\begin{center}
{
\begin{tabular}{lllllllllll}

0\ 1\ 2\ 3\ 4\ 5\ 6\ 7\ 8\ 9\ 10\ 11\ 12\ 13\\

0\ 8\ 6\ 9\ 3\ 12\ 4\ 7\ 1\ 13\ 2\ 10\ 5\ 11\\

0\ 12\ 11\ 8\ 13\ 3\ 2\ 7\ 5\ 4\ 1\ 6\ 10\ 9\\

1\ 3\ 9\ 7\ 4\ 6\ 12\ 8\ 10\ 2\ 0\ 11\ 13\ 5\\

1\ 4\ 2\ 5\ 10\ 13\ 7\ 8\ 11\ 9\ 12\ 3\ 6\ 0\\

1\ 12\ 4\ 0\ 10\ 13\ 9\ 8\ 5\ 11\ 7\ 3\ 6\ 2\\

2\ 0\ 4\ 5\ 3\ 6\ 1\ 9\ 7\ 11\ 12\ 10\ 13\ 8\\

2\ 3\ 5\ 1\ 13\ 11\ 7\ 9\ 10\ 12\ 8\ 6\ 4\ 0

\end{tabular}
}
\end{center}
Finally, an LH$(7,g,4,3)$ for any even $g$ is obtained by applying Theorem~\ref{thm:expand}
on this large set. For odd $g$, an LH$(7,g,4,3)$
does not exist by Theorem~\ref{thm:necess} which completes the claim of the lemma.
\end{proof}

The conclusion of Lemma~\ref{lem:LH5}, Lemma~\ref{lem:LH6}, Lemma~\ref{lem:LH7},
and Theorem~\ref{thm:necess} is the following result.

\begin{theorem}
\label{thm:LH789}
A large set of H-designs LH$(n,g,4,3)$ exists for all admissible parameters
with $n \in \{ 5,6,7\}$, with possible exceptions for $n=5$ and $g \equiv 2~ (\text{mod}~4)$.
\end{theorem}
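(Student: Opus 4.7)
The plan is to assemble the statement directly from the three lemmas already established together with the necessity half of Theorem~\ref{thm:necess}. First I would translate the necessary conditions of Theorem~\ref{thm:necess} into admissibility constraints for each of the three values of $n$. Since $(k,t)=(4,3)$, the conditions $gn\equiv 0\pmod 2$ and $g(n-1)(n-2)\equiv 0\pmod 3$ specialize as follows: for $n=5$ admissibility means $g$ even and $g\neq 2$; for $n=6$ admissibility means $3\mid g$; for $n=7$ admissibility means $g$ even.

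Next I would handle each $n$ separately. For $n=7$, Lemma~\ref{lem:LH7} produces an LH$(7,g,4,3)$ for every even $g$ and rules out odd $g$, so the $n=7$ case matches the admissibility characterization exactly. For $n=6$, Lemma~\ref{lem:LH6} produces an LH$(6,g,4,3)$ iff $3\mid g$, again matching the admissibility conditions exactly. For $n=5$, Lemma~\ref{lem:LH5} produces an LH$(5,4u,4,3)$ for every positive integer~$u$, which covers all admissible $g$ with $g\equiv 0\pmod 4$; the remaining admissible residue class is $g\equiv 2\pmod 4$ with $g\geq 6$, which the lemma does not address.

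Finally I would record that for $n=5$ with $g\equiv 2\pmod 4$ (and $g\geq 6$) the existence of an LH$(5,g,4,3)$ is not excluded by any result in the paper, so these cases must be left as possible exceptions. Putting everything together yields exactly the statement, and the proof reduces to a one-paragraph citation of Theorem~\ref{thm:necess} and Lemmas~\ref{lem:LH5}--\ref{lem:LH7}.

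There is no real obstacle here—the heavy lifting is done in the three preceding lemmas (particularly the two explicit block lists and the $8\times 14$ permutation array, followed by a call to Theorem~\ref{thm:expand}). The only subtlety is to make the admissibility bookkeeping transparent, so that the reader sees precisely which residue class for $n=5$ remains open and why the other cases are complete.
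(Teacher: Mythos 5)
Your proposal is correct and is essentially identical to the paper's own (very brief) proof, which simply states that the theorem is the conclusion of Lemmas~\ref{lem:LH5}--\ref{lem:LH7} together with Theorem~\ref{thm:necess}. Your admissibility bookkeeping ($g$ even and $g\neq 2$ for $n=5$, $3\mid g$ for $n=6$, $g$ even for $n=7$) is accurate, and you correctly identify that the only gap left by Lemma~\ref{lem:LH5} is the class $g\equiv 2\pmod 4$, $g\geq 6$.
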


\subsection{Large Sets of H-designs from Large Sets with Multiplicity}
\label{sec:large_permute}

The goal of this section is to obtain large sets with multiplicity and then
to obtain large sets of H-designs using Theorem~\ref{thm:LSH}.
The first step in this direction is to find a large set LS$(t,k,n;g^{k-t})$, for different
sets of parameters, where the multiplicity $g^{k-t}$ is not too large.
A special case of Theorem~\ref{thm:LSH}, which is the key to obtain large set of H-designs from
large set with multiplicity, was applied in~\cite{Etz96}
to obtain large set LH$(2^m,g,4,3)$, $m \geq 3$, for each $g \geq 2$ (see Theorem~\ref{thm:LHpower 2}).
If a construction of an
LS$(t,k,n)$ is known then clearly there exists an LS$(t,k,n;g^{k-t})$ for any $g \geq 1$
and Theorem~\ref{thm:LSH} can be applied trivially.
Hence, our first target is to construct such large sets LS$(3,4,n;\mu)$, where $n$ is not a power of 2 and
$\mu$ small as possible. Our second target is to construct large sets
LS$(t,k,n;g^{k-t})$ for $k > t >3$.

Let $S$ be Steiner system S$(t,k,n)$ on the point set $\Z_n$. We would like to use this Steiner system to form an
LS$(t,k,n;\mu)$ on the point set~$\Z_n$.
The main idea is to use a set of permutations on~$\Z_n$, to form isomorphic
systems to $S$, such that the union of the permutations
restricted to the $k$-subsets of $S$ yield each $k$-subset of $\Z_n$ exactly $\mu$ times.
This is the point that we want to have OD$_\lambda (k,n,n)$ or PA$_\lambda (k,n,n)$
with $\lambda$ small as possible.
The simplest set of such permutations are all the $n!$ permutations of $S_n$.
The outcome is $n!$ Steiner systems S$(t,k,n)$
such that each $k$-subset of $\Z_n$ is contained in exactly
$\frac{\binom{n}{t}}{\binom{k}{t}} n! {\Huge / } \binom{n}{k} =\frac{n!(n-k)!(k-t)!}{(n-t)!}$ of these systems.
This is a simple way to obtain a large set LS$(t,k,n;\mu)$, where $\mu=\frac{n!(n-k)!(k-t)!}{(n-t)!}$. The
multiplicity $\mu$ of this large set is very large and our target is to obtain such a large set with a much smaller multiplicity $\mu$.
The multiplicity can be cut to half if we use only the $n!/2$ even pemutations instead of all the $n!$ permutations of $S_n$.
As pointed in Section~\ref{sec:perm_arrays}, there are no known such array, for $k>3$, with a reasonable $\lambda$.
This is also true about the probabilistic argument for the existence of such arrays in~\cite{KLP17} and for
the probabilistic construction presented in~\cite{FPY15}.
Generally, to find a smaller set of permutations, for this purpose, is an interesting open problem for itself.
Theorem~\ref{thm:perm_LS} concludes this summary by presenting the parameters, when
the permutations of a perpendicular array PA$_\lambda (k,n,n)$ are applied on all the blocks of a Steiner system S$(t,k,n)$.
For small parameters we can use at least three different strategies. The first one is to construct
large sets with multiplicity using computer search. The second one is by applying ordered designs
or perpendicular arrays with small parameters. The third one is by ad-hoc constructions.
A few examples for all these methods are given in this subsection.

\begin{example}
\label{ex:S11}
We have used a computer search to find a large set LS$(4,5,11;2)$, on the point set $\Z_{11}$, with 14 Steiner
systems S$(4,5,11)$. The first system $S_1$ has the following 66 blocks.
\begin{center}
{
\begin{tabular}{lllllllllll}
$\{0, 1, 2, 3, 4\}$\ \ & $\{0, 1, 2, 5, 6\}$\ \ & $\{0, 1, 2, 7, 8\}$\ \ & $\{0, 1, 2, 9, 10\}$\ \ & $\{0, 1, 3, 5, 7\}$\ \ & $\{0,1,3,6,9\}$\\
$\{0, 1, 3, 8, 10\}$\ \ & $\{0, 1, 4, 5, 10\}$\ \ & $\{0, 1, 5, 8, 9\}$\ \ & $\{0, 1, 4, 6, 8\}$\ \ & $\{0, 1, 4, 7, 9\}$\ \ & $\{0,1,6,7,10\}$\\
$\{0, 2, 3, 5, 8\}$\ \ & $\{0, 2, 3, 7, 9\}$\ \ & $\{0, 2, 3, 6, 10\}$\ \ & $\{0, 2, 4, 8, 10\}$\ \ & $\{0, 2, 6, 8, 9\}$\ \ & $\{0,2,4,6,7\}$\\
$\{0, 2, 5, 7, 10\}$\ \ & $\{0, 5, 6, 8, 10\}$\ \ & $\{0, 4, 5, 7, 8\}$\ \ & $\{0, 3, 4, 7, 10\}$\ \ & $\{0, 3, 5, 9, 10\}$\ \ & $\{0,3,4,5,6\}$\\
$\{0, 2, 4, 5, 9\}$\ \ & $\{0, 3, 4, 8, 9\}$\ \ & $\{0, 3, 6, 7, 8\}$\ \ & $\{0, 4, 6, 9, 10\}$\ \ & $\{0, 5, 6, 7, 9\}$\ \ & $\{0,7,8,9,10\}$\\
$\{1, 2, 3, 5, 9\}$\ \ & $\{1, 2, 3, 7, 10\}$\ \ & $\{1, 2, 6, 7, 9\}$\ \ & $\{1, 4, 5, 6, 9\}$\ \ & $\{1, 2, 4, 5, 7\}$\ \ & $\{1,2,4,8,9\}$\\
$\{1, 4, 7, 8, 10\}$\ \ & $\{2, 3, 4, 7, 8\}$\ \ & $\{1, 2, 3, 6, 8\}$\ \ & $\{1, 2, 4, 6, 10\}$\ \ & $\{3, 4, 6, 8, 10\}$\ \ & $\{3,5,7,8,10\}$\\
$\{3, 4, 5, 7, 9\}$\ \ & $\{3, 6, 7, 9, 10\}$\ \ & $\{1, 2, 5, 8, 10\}$\ \ & $\{2, 5, 6, 9, 10\}$\ \ & $\{1, 3, 4, 5, 8\}$\ \ & $\{1,3,4,6,7\}$\\
$\{1, 3, 4, 9, 10\}$\ \ & $\{2, 3, 4, 6, 9\}$\ \ & $\{1, 3, 5, 6, 10\}$\ \ & $\{1, 3, 7, 8, 9\}$\ \ & $\{1, 5, 6, 7, 8\}$\ \ & $\{2,3,5,6,7\}$\\
$\{1, 5, 7, 9, 10\}$\ \ & $\{1, 6, 8, 9, 10\}$\ \ & $\{2, 3, 4, 5, 10\}$\ \ & $\{2, 3, 8, 9, 10\}$\ \ & $\{2, 4, 5, 6, 8\}$\ \ & $\{2,4,7,9,10\}$\\
$\{2, 5, 7, 8, 9\}$\ \ & $\{2, 6, 7, 8, 10\}$\ \ & $\{3, 5, 6, 8, 9\}$\ \ & $\{4, 5, 6, 7, 10\}$\ \ & $\{4, 5, 8, 9, 10\}$\ \ & $\{4,6,7,8,9\}$
\end{tabular}
}
\end{center}
This system and the 13 systems obtained by applying the following 13 coordinate permutations
\begin{center}
{
\begin{tabular}{lllllllllllllll}

$0$\ $1$\ $2$\ $3$\ $4$\ $6$\ $5$\ $9$\ $10$\ $7$\ $8$\\

$0$\ $1$\ $2$\ $3$\ $6$\ $7$\ $9$\ $4$\ $5$\ $10$\ $8$\\

$0$\ $1$\ $2$\ $3$\ $6$\ $8$\ $9$\ $7$\ $10$\ $4$\ $5$\\

$0$\ $1$\ $2$\ $3$\ $8$\ $5$\ $10$\ $9$\ $6$\ $4$\ $7$\\

$0$\ $1$\ $2$\ $3$\ $9$\ $5$\ $8$\ $4$\ $10$\ $6$\ $7$\\

$0$\ $1$\ $2$\ $4$\ $9$\ $5$\ $7$\ $8$\ $3$\ $6$\ $10$\\

$0$\ $1$\ $2$\ $5$\ $10$\ $4$\ $8$\ $7$\ $6$\ $3$\ $9$\\

$0$\ $1$\ $2$\ $6$\ $9$\ $7$\ $4$\ $8$\ $5$\ $10$\ $3$\\

$0$\ $1$\ $2$\ $7$\ $5$\ $4$\ $9$\ $10$\ $3$\ $8$\ $6$\\

$0$\ $1$\ $2$\ $7$\ $3$\ $10$\ $6$\ $8$\ $4$\ $9$\ $5$\\

$0$\ $1$\ $2$\ $8$\ $6$\ $10$\ $4$\ $5$\ $9$\ $7$\ $3$\\

$0$\ $1$\ $2$\ $8$\ $10$\ $6$\ $4$\ $9$\ $7$\ $5$\ $3$\\

$0$\ $1$\ $2$\ $9$\ $8$\ $4$\ $6$\ $5$\ $3$\ $7$\ $10$
 \end{tabular}
}
\end{center}
yield the large set LS$(4,5,11;2)$.
\end{example}

Example~\ref{ex:S11} yields the following result.

\begin{lemma}
\label{lem:3LS}
There exist the following three large sets with multiplicity:
\begin{center}
LS$(4,5,11;2)$, LS$(3,4,10;2)$,  LS$(5,6,12;2)$.
\end{center}
\end{lemma}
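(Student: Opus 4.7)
The plan is to build the three large sets in sequence via the classical derivation/extension relations among the Mathieu-style designs S$(3,4,10)$, S$(4,5,11)$, and S$(5,6,12)$, with Example~\ref{ex:S11} as the engine. First, LS$(4,5,11;2)$ is produced directly by Example~\ref{ex:S11}: the $14=2\binom{7}{1}$ systems arise from one fixed S$(4,5,11)$ on $\Z_{11}$ by applying 13 coordinate permutations (plus the identity), and the multiplicity-$2$ property is built into that construction.

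For LS$(3,4,10;2)$ I would derive each of the 14 systems of the LS$(4,5,11;2)$ at a fixed point, say~$10$. For each $S_i$ set $R_i \triangleq \{B\setminus\{10\}:B\in S_i,\ 10\in B\}$. Since the derivation of any S$(4,5,11)$ at a point is an S$(3,4,10)$, every $R_i$ is a Steiner quadruple system on $\{0,1,\ldots,9\}$. The multiplicity condition is immediate: a 4-subset $U$ of $\{0,1,\ldots,9\}$ satisfies $U\in R_i$ iff $U\cup\{10\}\in S_i$, and the latter holds for exactly two values of~$i$ by the defining property of LS$(4,5,11;2)$. The count matches $2\binom{10-3}{4-3}=14$, so $\{R_1,\ldots,R_{14}\}$ is an LS$(3,4,10;2)$.

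For LS$(5,6,12;2)$ I would invoke the classical extension of S$(4,5,11)$ to S$(5,6,12)$. Adjoin a new point~$\infty$ and, for each~$S_i$, define $T_i$ to consist of all Type~I blocks $B\cup\{\infty\}$ and all Type~II complementary blocks $\{0,1,\ldots,10\}\setminus B$, as $B$ ranges over the $66$ blocks of $S_i$; thus $|T_i|=132$. I would then verify that $T_i$ is an S$(5,6,12)$ by checking 5-subset coverage: 5-subsets containing~$\infty$ are handled by the S$(4,5,11)$ property of $S_i$, while a 5-subset $F\subseteq\{0,1,\ldots,10\}$ is either itself a block of $S_i$ (in which case it lies in the unique Type~I block $F\cup\{\infty\}$ and must lie in no Type~II block) or a non-block (in which case it lies in no Type~I block and must lie in exactly one Type~II block). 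The multiplicity-$2$ property of $\{T_1,\ldots,T_{14}\}$ then follows from the two multiplicity-preserving correspondences $F\cup\{\infty\}\leftrightarrow F$ and $C\leftrightarrow\{0,1,\ldots,10\}\setminus C$ between $6$-subsets of the $12$-point set and $5$-subsets of the $11$-point set, combined with the defining property of LS$(4,5,11;2)$.

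The main obstacle is the S$(5,6,12)$ verification in the last step, which reduces to two intersection facts about the (unique) S$(4,5,11)$: (a) no two distinct blocks are disjoint, and (b) every non-block $5$-subset is disjoint from exactly one block. I would establish (a) via the standard intersection-number equations for $m_j$, the number of unordered pairs of distinct blocks meeting in $j$ points, using the $4$-$(11,5,1)$ parameters $\lambda_1=30$, $\lambda_2=12$, $\lambda_3=4$; the resulting linear system (with $m_j=0$ for $j\geq 4$ forced by the Steiner property) pins down $m_0=0$. Fact (b) then follows from a direct double count: the number of (block, disjoint $5$-subset) pairs equals $66\cdot\binom{6}{5}=396$, which matches the number $\binom{11}{5}-66=396$ of non-block $5$-subsets, and combined with (a) this forces each non-block $5$-subset to be disjoint from exactly one block.
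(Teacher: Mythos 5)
Your proposal is correct and follows essentially the same route as the paper: LS$(4,5,11;2)$ directly from Example~\ref{ex:S11}, LS$(3,4,10;2)$ by deriving each system at a fixed point, and LS$(5,6,12;2)$ by the standard extension $S_i \mapsto \{X\cup\{\infty\}\} \cup \{\text{complements of blocks}\}$, with the multiplicity~$2$ carried through the natural bijections in each step. The only difference is that you spell out the intersection-number verification that the extension yields an S$(5,6,12)$, which the paper dismisses as well-known.
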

\begin{proof}
$~$
\begin{enumerate}
\item
The large set LS$(4,5,11;2)$ was constructed in Example~\ref{ex:S11} using $S_1$,
the Steiner system S$(4,5,11)$, and the given 13 permutations on $\Z_{11}$.

\item
Each system of the large set LS$(4,5,11;2)$ has exactly 30 blocks containing the point 10.
By considering these 30 blocks and removing the point 10
from each one of them yields an LS$(3,4,10;2)$. This LS$(3,4,10;2)$ consists
of 14 Steiner systems S$(3,4,10)$, on the point set $\Z_{10}$, which are the derived systems of the 14 Steiner
systems S$(4,5,11)$ of the large set LS$(4,5,11;2)$.

\item
Let $S_1,S_2,\ldots,S_{14}$ be the 14 systems of the LS$(4,5,11;2)$.
Let $S'_i \triangleq \{ X \cup \{ 11 \} ~:~  X \in S_i \} \cup \{ \Z_{11} \setminus X ~:~ X \in S_i \}$, for each $1 \leq i \leq 14$.
It is well-known that each such $S'_i$ is a Steiner system S$(5,6,12)$. It implies that
$\{ S'_i ~:~ 1 \leq i \leq 14 \}$ is a large set LS$(5,6,12;2)$.
\end{enumerate}
\end{proof}

\begin{example}
\label{ex:S10}

Let $S$ be the Steiner system S$(3,4,10)$ obtained as the derived system of the Steiner system S$(4,5,11)$
presented in Example~\ref{ex:S11}, i.e $\{ X \setminus \{11\} ~:~ 11 \in X,~ X \in S_1 \}$.
This system and the 20 systems obtained by applying the following 20 coordinates permutations
\begin{center}
{

  \begin{tabular}{lllllllllll}

$0$\ $6$\ $2$\ $5$\ $4$\ $9$\ $7$\ $1$\ $8$\ $3$\\

$0$\ $5$\ $2$\ $1$\ $8$\ $6$\ $9$\ $3$\ $4$\ $7$\\

$0$\ $4$\ $2$\ $7$\ $8$\ $1$\ $9$\ $5$\ $6$\ $3$\\

$0$\ $9$\ $2$\ $5$\ $8$\ $3$\ $1$\ $4$\ $6$\ $7$\\

$0$\ $9$\ $2$\ $7$\ $8$\ $4$\ $1$\ $6$\ $3$\ $5$\\

$0$\ $1$\ $2$\ $7$\ $8$\ $5$\ $4$\ $3$\ $9$\ $6$\\

$0$\ $6$\ $2$\ $4$\ $8$\ $7$\ $9$\ $3$\ $5$\ $1$\\

$0$\ $9$\ $2$\ $5$\ $6$\ $8$\ $4$\ $7$\ $1$\ $3$\\

$0$\ $7$\ $2$\ $5$\ $3$\ $6$\ $9$\ $4$\ $8$\ $1$\\

$0$\ $6$\ $2$\ $9$\ $3$\ $8$\ $4$\ $1$\ $7$\ $5$\\

$0$\ $6$\ $2$\ $7$\ $9$\ $1$\ $4$\ $5$\ $3$\ $8$\\

$0$\ $5$\ $2$\ $4$\ $9$\ $8$\ $6$\ $1$\ $7$\ $3$\\

$0$\ $8$\ $2$\ $4$\ $1$\ $9$\ $3$\ $6$\ $7$\ $5$\\

$0$\ $4$\ $2$\ $5$\ $7$\ $1$\ $6$\ $8$\ $3$\ $9$\\

$0$\ $5$\ $2$\ $8$\ $7$\ $3$\ $9$\ $1$\ $4$\ $6$\\

$0$\ $6$\ $2$\ $8$\ $5$\ $4$\ $3$\ $1$\ $9$\ $7$\\

$0$\ $5$\ $4$\ $7$\ $2$\ $3$\ $9$\ $6$\ $1$\ $8$\\

$0$\ $6$\ $4$\ $5$\ $8$\ $1$\ $7$\ $2$\ $9$\ $3$\\

$0$\ $1$\ $4$\ $9$\ $6$\ $5$\ $3$\ $2$\ $8$\ $7$\\

$0$\ $2$\ $4$\ $8$\ $6$\ $9$\ $3$\ $1$\ $7$\ $5$   \end{tabular}
}
\end{center}
yield a large set LS$(3,4,10;3)$.

\end{example}

The large set LS$(3,4,10;3)$ of Example~\ref{ex:S10} and the large set LS$(3,4,10;2)$ obtained in Lemma~\ref{lem:3LS} imply
the following theorem.

\begin{theorem}
\label{thm:LS10}
For each $\mu \geq 2$ there exists a large set LS$(3,4,10;\mu)$.
\end{theorem}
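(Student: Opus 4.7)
The plan is to combine the two building blocks already in hand, namely the LS$(3,4,10;2)$ from Lemma~\ref{lem:3LS} and the LS$(3,4,10;3)$ constructed in Example~\ref{ex:S10}, and then use a simple numerical representation to cover every $\mu \geq 2$.

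First I would record the trivial ``additivity'' observation: a large set with multiplicity is just a multiset of Steiner systems S$(t,k,n)$ in which every $k$-subset appears a specified number of times. Consequently, if $\mathcal{L}_1$ is an LS$(t,k,n;\mu_1)$ and $\mathcal{L}_2$ is an LS$(t,k,n;\mu_2)$, then the disjoint (multiset) union $\mathcal{L}_1 \sqcup \mathcal{L}_2$ is an LS$(t,k,n;\mu_1+\mu_2)$, since each $k$-subset of $\Z_n$ is covered exactly $\mu_1+\mu_2$ times. In particular, the union of $a$ copies of an LS$(3,4,10;2)$ with $b$ copies of an LS$(3,4,10;3)$ is an LS$(3,4,10;2a+3b)$ for any non-negative integers $a,b$.

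Next I would invoke the elementary fact (the Frobenius/coin problem for the coprime pair $\{2,3\}$) that every integer $\mu \geq 2$ can be written as $\mu = 2a + 3b$ with $a,b \in \Z_{\ge 0}$: if $\mu$ is even take $(a,b)=(\mu/2,0)$, and if $\mu$ is odd then $\mu \geq 3$ and we may take $(a,b)=((\mu-3)/2,1)$. Substituting such a representation into the construction above yields the desired LS$(3,4,10;\mu)$.

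I do not expect any real obstacle here; the only thing worth emphasizing is that the argument does not require the Steiner systems in $\mathcal{L}_1$ and $\mathcal{L}_2$ (or their copies) to be pairwise distinct, since the definition of an LS$(t,k,n;\mu)$ only counts incidences of $k$-subsets with systems and is invariant under repetition. Thus the proof reduces to the two references plus one line of bookkeeping.
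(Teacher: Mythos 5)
Your proposal is correct and matches the paper's (implicit) argument exactly: the paper derives Theorem~\ref{thm:LS10} directly from the LS$(3,4,10;2)$ of Lemma~\ref{lem:3LS} and the LS$(3,4,10;3)$ of Example~\ref{ex:S10}, with the additivity of multiplicities and the representation $\mu = 2a+3b$ for all $\mu \geq 2$ left unstated. Your explicit remark that repetition of systems is harmless is consistent with the paper's own conventions (e.g.\ its derivation of LS$(3,4,20;9m)$ from LS$(3,4,20;9)$), so there is no gap.
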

Applying Theorem~\ref{thm:LSH} on the large set LS$(3,4,10;\mu)$ implies the following theorem.
\begin{theorem}
\label{thm:LH10}
For each $g \geq 2$ there exists an LH$(10,g,4,3)$.
\end{theorem}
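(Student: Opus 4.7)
The plan is to apply Theorem~\ref{thm:LSH} directly with parameters $(n,k,t)=(10,4,3)$ and the given $g \geq 2$. According to that theorem, an LH$(10,g,4,3)$ exists whenever both an OA$(3,4,g)$ and an LS$(3,4,10;g^{k-t}) = $ LS$(3,4,10;g)$ are available, so the entire task reduces to verifying that both ingredients are on hand for every $g \geq 2$.

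First I would invoke Theorem~\ref{thm:t_t+1OA} with $t=4$ to produce an OA$(3,4,g)$ for every $g \geq 2$; this is the standard zero-sum array and requires no further work. Next, I would cite Theorem~\ref{thm:LS10}, which guarantees an LS$(3,4,10;\mu)$ for every $\mu \geq 2$; taking $\mu = g$ supplies the required large set with multiplicity. Since the hypothesis $g \geq 2$ ensures $\mu = g \geq 2$, the existence result of Theorem~\ref{thm:LS10} applies in every case covered by the statement to be proved.

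With both building blocks in hand, the final step is simply to feed them into Theorem~\ref{thm:LSH}, which combines a Steiner system from the large set with multiplicity with a member of the induced LOA$(3,4,g)$ to produce each H-design, and whose proof already verifies that the resulting collection partitions the $4$-subsets meeting each group in at most one point. No new combinatorial construction is needed; the ``hard part'' was done already in Example~\ref{ex:S11} and Example~\ref{ex:S10}, which via Lemma~\ref{lem:3LS} and Theorem~\ref{thm:LS10} provide large sets LS$(3,4,10;\mu)$ for all $\mu \geq 2$, so here the only obstacle is bookkeeping: confirming that no edge case (such as $g=1$) is needed, and that the multiplicity produced by the large set matches $g^{k-t} = g^{4-3} = g$ exactly, as it does.
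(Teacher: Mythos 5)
Your proposal is correct and matches the paper's own (very brief) justification exactly: the paper likewise obtains Theorem~\ref{thm:LH10} by feeding the LS$(3,4,10;\mu)$ of Theorem~\ref{thm:LS10} together with an OA$(3,4,g)$ into Theorem~\ref{thm:LSH}. Your additional bookkeeping about $g^{k-t}=g$ and the source of the orthogonal array (Theorem~\ref{thm:t_t+1OA}) is accurate and only makes explicit what the paper leaves implicit.
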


Theorem~\ref{thm:LH789}, Theorem~\ref{thm:LHpower 2}, Theorem~\ref{thm:LH10}, and Theorem~\ref{thm:necess} imply
the following conclusion.

\begin{corollary}
\label{cor:LH4_10}
A large set of H-designs LH$(n,g,4,3)$ exists for all admissible parameters
with $n \in \{ 4,5,6,7,8,10 \}$, with possible exceptions for $n=5$ and $g \equiv 2~ (\text{mod}~4)$.
\end{corollary}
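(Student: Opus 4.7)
The statement is a compilation corollary, so my plan is to decompose the range $n\in\{4,5,6,7,8,10\}$ according to which previous result supplies the required construction, and in each case to verify that every admissible $g$ (as specified by Theorem~\ref{thm:necess}) is actually covered.

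For $n=4$ and $n=8$ I would invoke Theorem~\ref{thm:LHpower 2} with $r=2$ and $r=3$ respectively, yielding an LH$(n,g,4,3)$ for every $g\geq 2$. Admissibility imposes no extra restriction in these cases: $gn$ is automatically even since $n$ is even, and $g(n-1)(n-2)$ is automatically divisible by $3$ because $\{n-1,n-2\}$ contains a multiple of $3$ in both situations. For $n\in\{5,6,7\}$ I would cite Theorem~\ref{thm:LH789} directly. Here it is worth spelling out the admissible sets: for $n=5$, $gn$ even forces $g$ even, and $(n,g)\neq(5,2)$, while Lemma~\ref{lem:LH5} only constructs the case $g=4u$, which is why the congruence class $g\equiv 2\pmod 4$ is flagged as a possible exception; for $n=6$, the divisibility $g(n-1)(n-2)=20g\equiv 0\pmod 3$ forces $3\mid g$, and Lemma~\ref{lem:LH6} hits exactly those $g$; for $n=7$, $7g$ even forces $g$ even, and Lemma~\ref{lem:LH7} hits exactly those $g$.

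Finally, for $n=10$ I would apply Theorem~\ref{thm:LH10}, noting that admissibility is trivial since $n$ is even and $(n-1)(n-2)=72$ is already divisible by $3$, so every $g\geq 2$ is admissible and every $g\geq 2$ is constructed. Combining these six cases gives the corollary, with the single residual family $n=5$, $g\equiv 2\pmod 4$ recorded as a possible exception.

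The ``obstacle'' is purely organizational: no new construction is required, only a careful case-by-case matching of the admissibility conditions of Theorem~\ref{thm:necess} against the ranges covered by Theorems~\ref{thm:LH789},~\ref{thm:LHpower 2}, and~\ref{thm:LH10}. The only point requiring care is confirming that the $g\equiv 2\pmod 4$ gap for $n=5$ is a genuine gap in the present state of the constructions and not a gap in admissibility.
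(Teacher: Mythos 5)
Your proposal is correct and follows essentially the same route as the paper, which simply cites Theorem~\ref{thm:LHpower 2} (for $n=4,8$), Theorem~\ref{thm:LH789} (for $n=5,6,7$), Theorem~\ref{thm:LH10} (for $n=10$), and Theorem~\ref{thm:necess} for admissibility. Your case-by-case verification of the admissibility conditions is more explicit than the paper's one-line justification, but it is the same argument.
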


\begin{theorem}
\label{thm:permLarge}
Let $S$ be an S$(3,4,n)$ on the point set $\Z_n$, where $n \geq 14$, and let $A$, $B$ be a partition of $\Z_n$ into
subsets of size $3$ and $n-3$, respectively.
If there exists a
perpendicular array PA$_\lambda (4,n-3,n-3)$, then there exists an LS$(3,4,n;\mu)$, where $\mu = \lambda \binom{n-4}{3} {\huge /} 4$.
\end{theorem}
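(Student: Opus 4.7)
The plan is to apply, for each row of the perpendicular array, the corresponding permutation of $B$ (identifying $\Z_{n-3}$ with $B$ via a fixed bijection, and extending by the identity on $A$) to the Steiner system $S$. Each such image $\pi(S)\triangleq\{\pi(Y):Y\in S\}$ is again a Steiner system S$(3,4,n)$ on $\Z_n$, so the construction yields $\lambda\binom{n-3}{4}=\mu(n-3)=\mu\binom{n-t}{k-t}$ Steiner systems, matching the required cardinality of an LS$(3,4,n;\mu)$.

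To verify that every $4$-subset $X$ of $\Z_n$ appears in exactly $\mu$ of these systems, I would classify $X$ by its configuration $i=|X\cap A|\in\{0,1,2,3\}$ with respect to the partition $\Z_n=A\cup B$. Because each $\pi$ fixes $A$ pointwise, the condition $X\in\pi(S)$ is equivalent to the existence of a block $Y\in S$ with $Y\cap A=X\cap A$ and $\pi(Y\setminus A)=X\setminus A$, so
\[
\big|\{\pi : X\in\pi(S)\}\big|=\sum_{\substack{Y\in S\\ Y\cap A=X\cap A}}\big|\{\pi:\pi(Y\setminus A)=X\setminus A\}\big|.
\]
The outer sum is a Steiner-system invariant depending only on $i$, and by inclusion--exclusion on the points and pairs of $A$ it equals $1$, $(n-4)/2$, $(n-4)(n-5)/6$, or $\binom{n-4}{3}/4$ for $i=3,2,1,0$ respectively. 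The inner count is the set-homogeneity of the PA at level $4-i$, which (granting uniform homogeneity at the lower levels) equals $\lambda(n-4)(n-5)(n-6)/24$, $\lambda(n-5)(n-6)/12$, $\lambda(n-6)/4$, and $\lambda$ for $i=3,2,1,0$. Multiplying in each case produces the common value $\mu=\lambda(n-4)(n-5)(n-6)/24=\lambda\binom{n-4}{3}/4$.

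The main obstacle is establishing the uniform set-homogeneity of the PA at the lower levels $1$, $2$, and $3$; this is not implied by the $4$-set axiom in full generality, so the argument either relies implicitly on the PA being induced by an ordered design (which furnishes the lower homogeneities automatically by averaging over the extension to a $4$-subset, as in the PA$_{24}(4,11,11)$ derived from OD$_1(4,11,11)$), or explicitly augments the hypothesis with these uniformities. Granted this, the remaining verification reduces to the four routine product identities above, and the resulting collection of $\mu(n-3)$ Steiner systems is the desired LS$(3,4,n;\mu)$.
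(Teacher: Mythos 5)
Your argument is essentially identical to the paper's proof: the same classification of $4$-subsets by configuration with respect to the partition $A\cup B$, the same counts $1$, $(n-4)/2$, $(n-4)(n-5)/6$, $\binom{n-4}{3}/4$ for the number of blocks of $S$ meeting $A$ in a prescribed subset, and the same products showing each configuration yields the common value $\mu=\lambda\binom{n-4}{3}/4$. The lower-level homogeneity issue you flag is likewise left implicit in the paper's proof, but it is not a genuine obstacle here: since the rows of a PA$_\lambda(4,n-3,n-3)$ are permutations of an $(n-3)$-set with $n-3\geq 11$, uniform $s$-homogeneity for $s=1,2,3$ follows from the level-$4$ property by a standard double-counting argument (the resulting linear system on the $s$-set counts has the constant vector as its unique solution whenever $n-3>6$).
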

\begin{proof}
First, we compute the number of blocks of $S$ in each configuration.
There is exactly one block in configuration $(3,1)$,
which implies that the unique subset from configuration $(3,0)$ is contained in one block of $S$.
There are exactly $\frac{3(n-3)-3}{2} = \frac{3(n-4)}{2}$ blocks in configuration $(2,2)$,
which implies that each subset from configuration $(2,0)$ is contained in $\gamma_2 =\frac{n-4}{2}$ blocks of $S$.
There are exactly $\left( 3 \binom{n-3}{2} - \frac{3(n-4)}{2} 2  \right) {\huge /} 3 = \frac{(n-4)(n-5)}{2}$ blocks in configuration $(1,3)$,
which implies that each subset from configuration $(1,0)$ is contained in $\gamma_1 =\frac{(n-4)(n-5)}{6}$ blocks of $S$.
There are exactly $\gamma_0 =\left( \binom{n-3}{3} - \frac{(n-4)(n-5)}{2}  \right) {\huge /} 4 = \binom{n-4}{3} {\huge /} 4$
blocks in configuration $(4,0)$.

Assume $M$ is a perpendicular array PA$_\lambda (4,n-3,n-3)$.
Let $S'$ be the set of blocks obtained by applying the permutations of $M$ on the B-part of $S$.
The computations done implies that each subset of configuration $(3,1)$ is contained in $\lambda \binom{n-4}{3} {\huge /} 4$ subsets of $S'$.
Each subset of configuration $(2,2)$ is contained in
$\gamma_2 \lambda \binom{n-3}{4} {\huge /} \binom{n-3}{2}=\lambda \binom{n-4}{3} {\huge /} 4$ subsets of $S'$.
Each subset of configuration $(1,3)$ is contained in
$\gamma_1 \lambda \binom{n-3}{4} {\huge /} \binom{n-3}{3}=\lambda \binom{n-4}{3} {\huge /} 4$ subsets of $S'$.
Finally, each subset of configuration $(0,4)$ is contained in
$\gamma_0 \lambda \binom{n-3}{4} {\huge /} \binom{n-3}{4}=\lambda \binom{n-4}{3} {\huge /} 4$ subsets of $S'$.

Thus, $S'$ is an LS$(3,4,n;\mu)$, where $\mu = \lambda \binom{n-4}{3} {\huge /} 4$.
\end{proof}
\begin{corollary}
\label{cor:permLarge}
Let $S$ be an S$(3,4,n)$ on the point set $\Z_n$, where $n \geq 14$, and let $A$, $B$ be a partition of $\Z_n$ into
subsets of size $3$ and $n-3$, respectively.
If there exists an
ordered design OD$_\lambda (4,n-3,n-3)$, then there exists an LS$(3,4,n;\mu)$, where $\mu = 6 \lambda \binom{n-4}{3}$.
\end{corollary}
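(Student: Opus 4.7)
The plan is to derive this corollary directly from Theorem~\ref{thm:permLarge} by observing that every ordered design is, after forgetting the order of entries within rows, a perpendicular array with a correspondingly larger index.

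More precisely, let $M$ be an OD$_\lambda(4,n-3,n-3)$. By definition, for any choice of four columns of $M$, each ordered $4$-tuple over $\Z_{n-3}$ appears exactly $\lambda$ times as a row of the resulting submatrix. Any $4$-subset of $\Z_{n-3}$ corresponds to $4!=24$ distinct ordered $4$-tuples, so each such $4$-subset appears a total of $24\lambda$ times among the rows of the submatrix (viewed now as unordered sets). Hence $M$, interpreted as a set of permutations, is also a perpendicular array PA$_{24\lambda}(4,n-3,n-3)$.

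I would then apply Theorem~\ref{thm:permLarge} to $S$ with the perpendicular array PA$_{24\lambda}(4,n-3,n-3)$ obtained in the previous step. The theorem yields an LS$(3,4,n;\mu')$ on $\Z_n$, where
\[
\mu' \;=\; (24\lambda)\binom{n-4}{3}\Big/4 \;=\; 6\lambda\binom{n-4}{3}~,
\]
which is exactly the multiplicity claimed in the corollary. Since no new combinatorial idea is needed beyond the OD-to-PA reinterpretation, there is no substantive obstacle; the only thing to be careful about is the bookkeeping of the parameter $\lambda$ under this translation, and the verification that the partition hypothesis on $\Z_n$ used in Theorem~\ref{thm:permLarge} is identical to the one assumed in the corollary.
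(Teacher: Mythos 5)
Your proposal is correct and is exactly the intended derivation: the paper states the corollary immediately after Theorem~\ref{thm:permLarge} without separate proof, and the factor $6 = 4!/4$ in the multiplicity confirms that the argument is precisely the reinterpretation of an OD$_\lambda(4,n-3,n-3)$ as a PA$_{24\lambda}(4,n-3,n-3)$ followed by an application of that theorem. Your bookkeeping of $\lambda$ and the partition hypothesis both check out against the paper's definitions.
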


Theorem~\ref{thm:permLarge} and Corollary~\ref{cor:permLarge} can be generalized for other Steiner
systems S$(t,t+1,n)$. We omit these generalizations as there are no perpendicular arrays or ordered design
with a relatively small number of permutations.

Theorem~\ref{thm:permLarge} and Corollary~\ref{cor:permLarge} can also be modified and generalied to other
Steiner system S$(t,t+1,n)$, where the point set is partitioned into two subsets of size $t$ and $n-t$,
and also to the case when the point set is partitioned into two subsets of size $\ell$ and $n-\ell$,
where $\ell <t$. We omit the related discussion and theorems, since we don't have any example where a large set with
relatively small multiplicity is obtained. Contrary to these Corollary~\ref{cor:permLarge} can be applied
using OD$_1(4,11,11)$ (see Theorem~\ref{thm:PA_nopairs}) to obtain the following result.

\begin{corollary}
$~$
There exists an LS$(3,4,14;720)$.
\end{corollary}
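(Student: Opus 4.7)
The proof is essentially a direct application of Corollary~\ref{cor:permLarge} with the parameters $n=14$ and $\lambda=1$. The plan is to set $n-3=11$ so that we can plug in the ordered design OD$_1(4,11,11)$ guaranteed by Theorem~\ref{thm:PA_nopairs}.

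First I would verify that the hypotheses of Corollary~\ref{cor:permLarge} are satisfied for $n=14$. We need a Steiner system S$(3,4,14)$, i.e.\ an SQS$(14)$. Since $14 \equiv 2 \pmod 6$, the admissibility conditions for Steiner quadruple systems are met, so such a system exists (this is classical). We then fix any partition of $\Z_{14}$ into a subset $A$ of size $3$ and a subset $B$ of size $11$. Next, we invoke Theorem~\ref{thm:PA_nopairs} to supply the ordered design OD$_1(4,11,11)$; this plays the role of the array of permutations on the $11$-set $B$ in the construction.

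With these ingredients in hand, I would apply Corollary~\ref{cor:permLarge} directly, which produces an LS$(3,4,14;\mu)$ with multiplicity
\[
\mu \;=\; 6\lambda\binom{n-4}{3} \;=\; 6 \cdot 1 \cdot \binom{10}{3} \;=\; 6 \cdot 120 \;=\; 720,
\]
as required.

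There is really no hard step here since Corollary~\ref{cor:permLarge} has already done the combinatorial accounting (in terms of the configurations $(3,1),(2,2),(1,3),(0,4)$ of blocks of the SQS$(14)$ relative to the partition $A \cup B$). The only verification worth highlighting is that the arithmetic $\mu = 720$ matches the claim and that the parameters of OD$_1(4,11,11)$ align with $n-3=11$; both are immediate. Thus the corollary follows at once.
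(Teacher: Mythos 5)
Your proposal is correct and matches the paper's own argument exactly: the corollary is obtained by applying Corollary~\ref{cor:permLarge} with an SQS$(14)$ (which exists since $14 \equiv 2 \pmod 6$) and the ordered design OD$_1(4,11,11)$ from Theorem~\ref{thm:PA_nopairs}, giving $\mu = 6 \cdot 1 \cdot \binom{10}{3} = 720$. Nothing further is needed.
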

\begin{corollary}
$~$
There exists an LH$(14,720,4,3)$.
\end{corollary}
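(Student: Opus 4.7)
The plan is to recognize this as an immediate application of the master theorem for building large sets of H-designs out of large sets with multiplicity, namely Theorem~\ref{thm:LSH}, combined with the corollary immediately preceding the statement, which provides the LS$(3,4,14;720)$.

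More concretely, first I would take the LS$(3,4,14;720)$ from the previous corollary (which itself came from applying Corollary~\ref{cor:permLarge} to the Steiner system S$(3,4,14)$ using the OD$_1(4,11,11)$ guaranteed by Theorem~\ref{thm:PA_nopairs}). Next I would invoke Theorem~\ref{thm:t_t+1OA} with $t=4$ and $g=720$ to obtain an OA$(3,4,720)$, since that theorem gives an OA$(t-1,t,g)$ for every $t\geq 2$ and $g\geq 2$.

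Finally, I would apply Theorem~\ref{thm:LSH} with parameter choice $(t,k,n,g)=(3,4,14,720)$. The only thing to check is that the exponent condition of Theorem~\ref{thm:LSH} is satisfied: we need an LS$(t,k,n;g^{k-t})$, and here $g^{k-t}=720^{4-3}=720$, which matches the multiplicity of the large set produced in the first step. The hypotheses of Theorem~\ref{thm:LSH} are therefore met, and its conclusion is precisely the existence of an LH$(14,720,4,3)$.

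There is really no obstacle in this argument; it is a three-line bookkeeping proof chaining together results already established in the paper. The only sanity check worth stating explicitly is the matching of the multiplicity $720=g^{k-t}$ with the size guaranteed by the previous corollary, which is what makes this the natural application of Theorem~\ref{thm:LSH} rather than requiring any further combinatorial work.
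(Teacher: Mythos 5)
Your proposal is correct and follows exactly the route the paper intends: take the LS$(3,4,14;720)$ from the preceding corollary, obtain an OA$(3,4,720)$ from Theorem~\ref{thm:t_t+1OA}, and apply Theorem~\ref{thm:LSH} with $(t,k,n,g)=(3,4,14,720)$, noting $g^{k-t}=720$. The multiplicity check you state explicitly is the only nontrivial point, and it is right.
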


The coordinate partitioning method which was mentioned above can be applied to Steiner system with specific
parameters and related perpendicular array. We demonstrate the idea for $S$,
a Steiner system S$(5,6,12)$. Assume that the system is
constructed on a point set partitioned into two subsets $A_1$ and $A_2$ of size 5 and 7, respectively.
First, we compute the number of blocks in $S$ for each configuration.
Configuration $(5,1)$ has exactly one block,
which implies that unique subset from configuration $(5,0)$ is contained in one block of $S$.
Configuration $(4,2)$ has 15 blocks,
which implies that each subset from configuration $(4,0)$ is contained in exactly 3 blocks of $S$.
Configuration $(3,3)$ has 50 blocks,
which implies that each subset from configuration $(3,0)$ is contained in exactly 5 blocks of $S$.
Configuration $(2,4)$ has 50 blocks,
which implies that each subset from configuration $(2,0)$ is contained in exactly 5 blocks of $S$.
Configuration $(1,5)$ has 15 blocks,
which implies that each subset from configuration $(1,0)$ is contained in exactly 3 blocks of $S$.
Finally, configuration $(5,1)$ has exactly one block.
Let $M$ be a perpendicular array $PA_3(3,7,7)$ (see Theorem~\ref{thm:PA_nopairs}).
Let $S'$ be the set of blocks obtained by applying the permutations of $M$ on the part $A_2$ of $S$.
The computations done imply that each subset of configuration $(5,1)$ is contained in $3 \binom{7}{3} {\huge /} 7=15$ subsets of $S'$.
Each subset of configuration $(4,2)$ is contained in
$15 \cdot 3 \binom{7}{3} {\huge /} \left( \binom{5}{4}\binom{7}{2} \right)= 15$ subsets of $S'$.
Each subset of configuration $(3,3)$ is contained in
$50 \cdot 3 \binom{7}{3} {\huge /} \left( \binom{5}{3}\binom{7}{3} \right)= 15$ subsets of $S'$.
Note, that the computations for configurations $(2,4)$, $(1,5)$, $(0,6)$ are the same
as for configurations $(3,3)$, $(4,2)$, $(5,1)$, respectively.
Thus, $S'$ is an LS$(5,6,12;15)$. Combining this result and Lemma~\ref{lem:3LS} we infer
the following theorem.

\begin{theorem}
There exists an LS$(4,5,11;\mu)$ and an LS$(5,6,12;\mu)$ for each $\mu \geq 2$,
with possible exceptions when $\mu \in \{ 3,5,7,9,11,13 \}$.
\end{theorem}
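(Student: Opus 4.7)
The plan is to combine the two large sets of Steiner systems S$(5,6,12)$ whose existence we already have---namely the LS$(5,6,12;2)$ from Lemma~\ref{lem:3LS} and the LS$(5,6,12;15)$ constructed just above the statement---by concatenation, and then to transfer the result to LS$(4,5,11;\mu)$ via the derived-design operation at a fixed point.

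The key observation is the trivial \emph{concatenation principle}: if LS$(t,k,n;\mu_1)$ and LS$(t,k,n;\mu_2)$ both exist on the same point set, then their multiset union is an LS$(t,k,n;\mu_1+\mu_2)$, because every $k$-subset is now covered exactly $\mu_1+\mu_2$ times. Applied to our two building blocks, $a$ copies of the LS$(5,6,12;2)$ together with $b$ copies of the LS$(5,6,12;15)$ produce an LS$(5,6,12;2a+15b)$ for all integers $a,b\ge 0$. A standard Frobenius-type (Chicken McNugget) calculation with $\gcd(2,15)=1$ shows that the representable set $\{2a+15b:a,b\ge 0\}$ is precisely $\{0\}$ together with all even $\mu\ge 2$ and all odd $\mu\ge 15$; intersecting with $\{\mu:\mu\ge 2\}$ and subtracting leaves exactly the six exceptional values $\{3,5,7,9,11,13\}$ listed in the statement.

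For the LS$(4,5,11;\mu)$ half, I would pass from the LS$(5,6,12;\mu)$ just produced by a point-derivation. Fix any $p\in\Z_{12}$, and for each member $\Sigma$ of the LS$(5,6,12;\mu)$ form the derived design $\Sigma_p=\{X\setminus\{p\}:X\in\Sigma,\ p\in X\}$, which is a Steiner system S$(4,5,11)$ on $\Z_{12}\setminus\{p\}$. A $5$-subset $Y$ of $\Z_{12}\setminus\{p\}$ belongs to $\Sigma_p$ iff $Y\cup\{p\}$ is a block of $\Sigma$; since $Y\cup\{p\}$ is a $6$-subset of $\Z_{12}$ and is therefore covered exactly $\mu$ times by the LS$(5,6,12;\mu)$, the family $\{\Sigma_p:\Sigma\}$ is the desired LS$(4,5,11;\mu)$. (A count of sizes confirms this: both large sets have $7\mu$ systems.)

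I do not expect any real obstacle in the argument: the concatenation principle is immediate from the definitions, the Frobenius computation is a one-line numerical check, and the derived-design step is a routine block-design manipulation. The only point requiring attention is to verify that the representable odd values below $15$ are precisely $\{3,5,7,9,11,13\}$ and thus that exactly these are the possible exceptions; everything else is bookkeeping.
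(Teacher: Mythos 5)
Your proposal is correct and matches the paper's (implicit) argument: the paper likewise obtains all $\mu=2a+15b$ by combining copies of the LS$(5,6,12;2)$ of Lemma~3 with the LS$(5,6,12;15)$ built from PA$_3(3,7,7)$, and passes to LS$(4,5,11;\mu)$ by point-derivation, the same device already used in Lemma~3 to derive LS$(3,4,10;2)$ from LS$(4,5,11;2)$. Your Frobenius computation correctly identifies $\{3,5,7,9,11,13\}$ as the only non-representable values $\mu\geq 2$, so nothing is missing.
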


\begin{theorem}
There exists an LH$(11,g,5,4)$ and an LH$(12,g,6,5)$ for each $g \geq 2$,
with possible exceptions when $g \in \{ 3,5,7,9,11,13 \}$.
\end{theorem}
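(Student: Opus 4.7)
The plan is to derive both assertions as direct instantiations of Theorem~\ref{thm:LSH}. For the first, I would set $(n,k,t)=(11,5,4)$ so that the multiplicity required is $g^{k-t}=g$; Theorem~\ref{thm:LSH} then says that the existence of an OA$(4,5,g)$ together with an LS$(4,5,11;g)$ forces the existence of an LH$(11,g,5,4)$. Likewise, for the second assertion I would set $(n,k,t)=(12,6,5)$ so that again $g^{k-t}=g$, and reduce the problem to producing an OA$(5,6,g)$ and an LS$(5,6,12;g)$.

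The orthogonal array ingredients are free. Theorem~\ref{thm:t_t+1OA} produces an OA$(t-1,t,g)$ for every $t\geq 2$ and every $g\geq 2$; specializing to $t=5$ and $t=6$ delivers OA$(4,5,g)$ and OA$(5,6,g)$, respectively, with no arithmetic restriction on $g$. The large-set-with-multiplicity ingredients are exactly what the theorem immediately preceding this one furnishes: an LS$(4,5,11;\mu)$ and an LS$(5,6,12;\mu)$ exist for every $\mu\geq 2$ with possible exceptions $\mu\in\{3,5,7,9,11,13\}$. Setting $\mu=g$ gives us precisely what is needed, with precisely the advertised list of possible exceptions.

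Combining these two pieces through Theorem~\ref{thm:LSH} yields LH$(11,g,5,4)$ and LH$(12,g,6,5)$ for every $g\geq 2$, except possibly when $g\in\{3,5,7,9,11,13\}$. The only conceptual point to verify is that Theorem~\ref{thm:LSH} imposes no further arithmetic condition on $g$ beyond the existence of its two inputs, so the exceptional set transfers cleanly and is inherited entirely from the LS-with-multiplicity side rather than from the OA side.

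The main obstacle, strictly speaking, has already been absorbed into the cited statements: the construction of LS$(4,5,11;\mu)$ and LS$(5,6,12;\mu)$ for the non-trivial values of $\mu$ (which relied on Example~\ref{ex:S11}, Lemma~\ref{lem:3LS}, and the configuration argument using the PA$_3(3,7,7)$ of Theorem~\ref{thm:PA_nopairs}). Given those, the present corollary reduces to a one-line invocation of Theorem~\ref{thm:LSH} together with Theorem~\ref{thm:t_t+1OA}, and no further combinatorics is needed.
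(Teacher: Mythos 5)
Your proposal is correct and is exactly the derivation the paper intends: the theorem follows from Theorem~\ref{thm:LSH} with $(n,k,t)=(11,5,4)$ and $(12,6,5)$, using the OA$(4,5,g)$ and OA$(5,6,g)$ supplied by Theorem~\ref{thm:t_t+1OA} and the LS$(4,5,11;g)$ and LS$(5,6,12;g)$ from the immediately preceding theorem, with the exceptional set inherited entirely from the latter. The paper leaves this proof implicit, and your account fills it in faithfully.
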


\section{The Main Ingredient Constructions}
\label{sec:review}

In this section we will discuss the main recursive constructions for
Steiner quadruple systems or more precisely, for a set of pairwise disjoint Steiner quadruple
systems. The first construction is a folklore doubling construction
which will be called the DB (for doubling) Construction.
The second one is also a doubling construction due to Lindner~\cite{Lin77}. It will
be called the DLS (for Doubling Lindner Systems) Construction. We will present a slightly different variant than the one
given in~\cite{Lin77}. This variant was already presented in~\cite{Etz96}.
The third construction is a quadrupling construction. It is a variant of the construction
presented in another paper of Lindner~\cite{Lin85}. It will be called the QLS (for Quadrupling Lindner systems) Construction.
While the two doubling constructions will be defined and discussed in details, for the quadrupling construction,
only some properties will be discussed, while the exact definition of the construction for our setup will
be given in Section~\ref{sec:five}.
These constructions will be used later in a variant for a construction of Etzion and Hartman~\cite{EtHa91}
which is a more complicated construction, in which instead of doubling, or quadrupling, the multiplication is
by $2^m$. As this will be the main construction and it is more complicated,
it will be presented separately in Section~\ref{sec:mainC}.

\subsection{DLS Construction (Doubling)}
\label{sec:DLS}

Let $(\Z_n,B)$ be an SQS$(n)$ and let $A$ be a $n \times n$ Latin square,
on the point set $\Z_n$, with
no $2 \times 2$ subsquares.
Denote by~$\alpha_i$ the permutation on $\Z_n$ defined by $\alpha_i(j)=y$ if and only if $A(i,j)=y$.
For each $i$, $0 \leq i \leq n-1$, we define a set of quadruples $B_i$ on $\Z_n \times \Z_2$ as follows:

\begin{enumerate}
\item For each quadruple $\{ x_1,x_2,x_3,x_4 \} \in B$, the following 8 quadruples are contained in~$B_i$.
$$
\{ (x_1,0),(x_2,0),(x_3,0),(\alpha_i(x_4),1) \},~~ \{ (x_1,1),(x_2,1),(x_3,1),(\alpha_i(x_4),0) \}
$$
$$
\{ (x_1,0),(x_2,0),(\alpha_i(x_3),1),(x_4,0) \},~~ \{ (x_1,1),(x_2,1),(\alpha_i(x_3),0),(x_4,1) \}
$$
$$
\{ (x_1,0),(\alpha_i(x_2),1),(x_3,0),(x_4,0) \},~~ \{ (x_1,1),(\alpha_i(x_2),0),(x_3,1),(x_4,1) \}
$$
$$
\{ (\alpha_i(x_1),1),(x_2,0),(x_3,0),(x_4,0) \},~~ \{ (\alpha_i(x_1),0),(x_2,1),(x_3,1),(x_4,1) \}
$$

\item For each pair $\{ x_1,x_2 \} \subset \Z_n$, the quadruple
$\{ (x_1,0),(x_2,0),(\alpha_i(x_1),1),(\alpha_i(x_2),1) \}$ is contained in~$B_i$.
\end{enumerate}

This DLS Construction is a variant~\cite{Etz96} of the Lindner Construction~\cite{Lin77}.
Each $B_i$ constructed via the DLS Construction in an SQS$(2n)$ and the set
$\{ B_i ~:~ 0 \leq i \leq n-1\}$ is a set of $n$ pairwise disjoint SQS$(2n)$.

The DLS construction is applied with a set of permutations defined by the $n$ rows of
the $n \times n$ Latin square $M$ with no $2 \times 2$ subsquares.
The DLS Construction can be applied also with Latin squares in which
there is no requirements for the nonexistence of $2 \times 2$ subsquares as was
done in~\cite{Etz96}. The constructions of large sets given in the sequel also
do not require arrays of permutations with no $2 \times 2$ subsquares.
The construction to achieve our goals will be applied with other sets (or multiset)
of permutations of $S_n$. In this case we have to make the following analysis.
Assume that $M$ is such a set (or multiset) of $\gamma \frac{(n-1)n}{2}$ permutations,
where $\gamma$ is even. $M$ can be viewed as a $( \gamma \frac{(n-1)n}{2} ) \times n$ matrix,
where each row represents a permutation from $M$.
Assume further that in each $( \gamma \frac{(n-1)n}{2} ) \times 2$ submatrix of $M$
each unordered pair $\{ i,j \}$, $i,j \in \Z_n$, appears
in exactly $\gamma$ rows. By applying these permutations of $M$ in the DLS Construction, instead
of the $n \times n$ Latin square, we obtain a set
of $\gamma \frac{(n-1)n}{2}$ SQS$(2n)$ which contains each quadruple from configuration $(3,1)$
in exactly $\frac{(n-1)\gamma}{2}$ systems and the same is true for each quadruple
from configuration $(1,3)$. Each quadruple
from configuration $(2,2)$ is contained in $\gamma$ of these systems. There are no quadruples from
configurations $(0,4)$ and $(4,0)$ in all these systems. These quadruples required in our main constructions can be obtained
with the next construction, namely the DB Construction. These observations will be used in
our main construction and are summarized as follows.
\begin{lemma}
\label{lem:sumDLS}
If the DLS Construction is applied with a perpendicular array PA$_\gamma (2,n,n)$, then
the DLS Construction yields a set $\cR$ with $\gamma \frac{(n-1)n}{2}$ systems (SQS$(2n)$)
on the point set $\Z_n \times \Z_2$ (with two natural parts $\Z_n\times\{0\}$ and $\Z_n\times\{1\}$).
\begin{enumerate}
\item Each quadruple from configuration $(3,1)$ is contained in exactly $\frac{\gamma (n-1)}{2}$ systems of $\cR$.

\item Each quadruple from configuration $(1,3)$ is contained in exactly $\frac{\gamma (n-1)}{2}$ systems of $\cR$.

\item Each quadruple from configuration $(2,2)$ is contained in exactly $\gamma$ systems of $\cR$.
\end{enumerate}
There are no quadruples from configurations $(4,0)$ and $(0,4)$ in any system of $\cR$.
\end{lemma}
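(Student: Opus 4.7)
The plan is to first dispose of the two easy assertions—the absence of configurations $(4,0)$ and $(0,4)$, and the fact that each individual $B_i$ is indeed an SQS$(2n)$—before working out the three enumeration counts. For the first, observe that every quadruple generated by either rule of the DLS Construction has points with both second coordinates $0$ and $1$: the first rule produces $(3,1)$ and $(1,3)$ quadruples, while the second rule produces $(2,2)$ quadruples, so no quadruple of configuration $(4,0)$ or $(0,4)$ is ever written down. For the second, appeal to the standard analysis of the DLS Construction when applied with a single permutation; verification of the SQS property depends only on $\alpha_i$ being a bijection, and the no-$2\times 2$-subsquare hypothesis is needed for pairwise disjointness across distinct $i$, not for each $B_i$ individually being a design.

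The substantial content is a preliminary column-occurrence count for the PA, which is the one nontrivial step. Let $R_{w,z}$ denote the number of rows of $M$ in which column $w$ has entry $z$, and write $N_{w,v}(a,b)$ for the number of rows with $a$ in column $w$ and $b$ in column $v$. For any $v\ne w$, summing the PA identity $N_{w,v}(z,y)+N_{w,v}(y,z)=\gamma$ over the $n-1$ values $y\ne z$, and using that each row with $z$ in column $w$ has some unique $y\ne z$ in column $v$, yields $R_{w,z}+R_{v,z}=(n-1)\gamma$. Since this relation holds for every $v\ne w$, the quantity $R_{w,z}$ is independent of $w$ and must equal $\gamma(n-1)/2$. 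The three enumeration claims then follow by direct bookkeeping.

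For a quadruple $\{(y_1,0),(y_2,0),(y_3,0),(z,1)\}$ of configuration $(3,1)$, the triple $\{y_1,y_2,y_3\}$ lies in a unique block $\{y_1,y_2,y_3,w\}$ of the base Steiner system. Inspecting the four $(3,1)$ block-type quadruples generated by the first rule of the construction, this quadruple appears in $B_i$ if and only if $\alpha_i(w)=z$, so its multiplicity across $\cR$ is exactly $R_{w,z}=\gamma(n-1)/2$. Configuration $(1,3)$ is identical after exchanging the roles of the two parts. A quadruple $\{(y_1,0),(y_2,0),(z_1,1),(z_2,1)\}$ of configuration $(2,2)$ can only arise from the second rule applied to the pair $\{y_1,y_2\}$, and then only when $\{\alpha_i(y_1),\alpha_i(y_2)\}=\{z_1,z_2\}$; the PA condition applied to columns $y_1$ and $y_2$ guarantees this occurs in exactly $\gamma$ rows, completing the count.
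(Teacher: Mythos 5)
Your proof is correct and follows the same route the paper takes (the paper merely asserts these counts in the paragraph preceding the lemma). The one substantive detail you add --- deriving from the pair-balance property that each symbol occurs exactly $\gamma(n-1)/2$ times in each column of the PA$_\gamma(2,n,n)$, which is exactly what the configuration $(3,1)$ and $(1,3)$ counts require --- is a welcome filling-in of a step the paper leaves implicit, and your identification of the unique block $\{y_1,y_2,y_3,w\}$ and the condition $\alpha_i(w)=z$ matches the intended argument.
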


\subsection{DB Construction (Doubling)}
\label{sec:DB}

Let $(\Z_n,B)$ be an SQS($n$), let $F=\{F_0,F_1,\ldots,F_{n-2} \}$ and
$F'=\{F'_0,F'_1,\ldots,F'_{n-2} \}$ be two one-factorizations (not necessarily distinct) of $K_n$ on the vertex set $\Z_n$
($F$ will be called the first one-factorization and $F'$ the second one-factorization).

Let $\alpha$ be any permutation on the set $\{ 0,1,\ldots,n-2 \}$. Define the collection of quadruples $B'$
on $\Z_n \times \Z_2$ as follows.

\begin{enumerate}
\item For each quadruple $\{ x_1,x_2,x_3,x_4 \} \in B$, the following two quadruples are
contained in~$B'$.
$$
\{ (x_1,0),(x_2,0),(x_3,0),(x_4,0) \},~~ \{ (x_1,1),(x_2,1),(x_3,1),(x_4,1) \}~.
$$

\item For each $i \in \{ 0,1,\ldots,n-2\}$ and $\{ x_1,x_2 \} \in F_i$, $\{ y_1,y_2 \} \in F'_j$,
where $j=\alpha (i)$, the following quadruple is contained in $B'$.
$$
\{ (x_1,0),(x_2,0),(y_1,1),(y_2,1) \}~.
$$
\end{enumerate}
Then, $B'$ forms an SQS$(2n)$.

Now, assume that instead of the set $B$, we apply the DB Construction with\linebreak
an LS$(3,4,n;g(n-1))$. Instead of one permutation $\alpha$, the construction is applied with
$n-1$ permutations on $\{ 0,1,\ldots,n-2 \}$ taken from an $(n-1) \times (n-1)$ Latin square $M$.
An LS$(3,4,n;g(n-1))$ contains $g(n-1)(n-3)$ systems of SQS$(n)$, which implies that
each permutation of $M$ is applied $g(n-3)$ times to obtain SQS($2n$) in the LS$(3,4,n;g(n-1))$.
The following theorem summarizes the configurations of the quadruples obtained
in this construction.

\begin{lemma}
\label{lem:sumDB}
Assume that the DB Construction is applied with an $(n-1) \times (n-1)$ Latin square $M$ and an LS$(3,4,n;g(n-1))$,
where each permutation of $M$ is applied with $g(n-3)$ systems of the LS$(3,4,n;g(n-1))$. Then
the DB Construction yields a set $\cR$ with $g (n-3)(n-1)$ systems (SQS$(2n)$)
on the point set $\Z_n \times \Z_2$.
\begin{enumerate}
\item Each quadruple from configuration $(4,0)$ is contained in exactly $g(n-1)$ systems of $\cR$.

\item Each quadruple from configuration $(0,4)$ is contained in exactly $g(n-1)$ systems of $\cR$.

\item Each quadruple from configuration $(2,2)$ is contained in exactly $g(n-3)$ systems of $\cR$.
\end{enumerate}
There are no quadruples from configurations $(3,1)$ and $(1,3)$ in any system of $\cR$.
\end{lemma}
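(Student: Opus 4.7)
The plan is to verify each of the four claims by counting, for a fixed quadruple of a given configuration, how many SQS$(2n)$s of $\cR$ contain it, splitting by whether that quadruple arises from Rule~1 (same-parity blocks lifted from the underlying SQS$(n)$s) or Rule~2 (mixed-parity blocks coming from the matched one-factor pairs). First I would note the size: $\cR$ contains one system per (permutation, SQS$(n)$) pair, and there are $(n-1)$ permutations of $M$, each paired with $g(n-3)$ of the $g(n-1)(n-3)$ systems in the LS$(3,4,n;g(n-1))$, for a total of $g(n-1)(n-3)$ systems. Also, an inspection of Rules~1 and~2 shows that the only configurations a constructed quadruple can occupy are $(4,0)$, $(0,4)$, and $(2,2)$, which immediately takes care of the last assertion about configurations $(3,1)$ and $(1,3)$.

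Next I would handle configuration $(4,0)$. A quadruple $\{(x_1,0),(x_2,0),(x_3,0),(x_4,0)\}$ can arise only through Rule~1, applied to the underlying $4$-subset $\{x_1,x_2,x_3,x_4\}\subseteq\Z_n$ inside some SQS$(n)$ of the large set. By the definition of LS$(3,4,n;g(n-1))$, this $4$-subset is a block of exactly $g(n-1)$ of those SQS$(n)$s, and each contributes exactly one $(4,0)$-quadruple (regardless of the permutation paired with it). Hence the count is $g(n-1)$, and the case $(0,4)$ is identical by symmetry between the two parts $\Z_n\times\{0\}$ and $\Z_n\times\{1\}$.

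The heart of the argument, and the main place where the structural assumptions are used, is configuration $(2,2)$. For a fixed quadruple $\{(x_1,0),(x_2,0),(y_1,1),(y_2,1)\}$, the pair $\{x_1,x_2\}$ lies in a unique one-factor $F_{i_0}$ of $F$ and $\{y_1,y_2\}$ lies in a unique one-factor $F'_{j_0}$ of $F'$. By Rule~2, this quadruple occurs in a given SQS$(2n)$ of $\cR$ if and only if the permutation $\alpha$ used there satisfies $\alpha(i_0)=j_0$. Since $M$ is an $(n-1)\times(n-1)$ Latin square, there is exactly one row of $M$ whose entry in column $i_0$ equals $j_0$, so exactly one permutation of $M$ qualifies; that permutation is paired with $g(n-3)$ SQS$(n)$s of the large set, giving exactly $g(n-3)$ systems containing our quadruple, as required.

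The main obstacle is making sure the bookkeeping in the $(2,2)$ case is tight: one has to confirm that the $(i_0,j_0)$ pair is determined uniquely by the quadruple (independent of labelling of the edges), that each qualifying SQS$(2n)$ of $\cR$ produces this quadruple exactly once (not multiple times through different choices of $i$ in Rule~2, which is clear since the one-factors partition the edges), and that the count of $g(n-3)$ systems per permutation is independent of the choice of $\alpha$. All three are straightforward from the definitions of one-factorizations, the Latin square, and the uniform pairing in the hypothesis, so once these observations are laid out the lemma follows by direct enumeration.
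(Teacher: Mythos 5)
Your proof is correct and is exactly the direct counting argument the paper intends: the paper states this lemma without proof as a summary of the DB Construction, and your case analysis by configuration --- $(4,0)$/$(0,4)$ counted via the multiplicity $g(n-1)$ of the large set, and $(2,2)$ counted via the unique row of the Latin square $M$ sending $i_0$ to $j_0$ together with the $g(n-3)$ systems paired to that row --- is the natural verification. No gaps; the bookkeeping points you flag (uniqueness of the one-factor indices, no overlap between Rule 1 and Rule 2 quadruples) are handled correctly.
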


By combining the DLS Construction (Lemma~\ref{lem:sumDLS}) and the DB Construction (Lemma~\ref{lem:sumDB})
we infer the following result which yield a doubling construction for large set with multiplicity

\begin{theorem}
\label{thm:sumDoubling}
If the DLS Construction is applied with a perpendicular array PA$_\gamma (2,n,n)$
and the DB Construction is applied with an LS$(3,4,n;g(n-1))$, where $\gamma =2g$, then the outcome is
an LS$(3,4,2n;g(n-1))$.
\end{theorem}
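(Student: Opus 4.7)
The plan is to show that the union of the two sets of systems produced by the DLS and DB Constructions, respectively, partitions the quadruples of $\Z_n\times\Z_2$ into the right multiset with multiplicity $g(n-1)$ per quadruple. Since the two constructions are already proven (via Lemma~\ref{lem:sumDLS} and Lemma~\ref{lem:sumDB}) to yield genuine SQS$(2n)$s on $\Z_n\times\Z_2$, we only need to track how often each quadruple of $\Z_n\times\Z_2$ appears across the combined collection, configuration by configuration under the natural bipartition $\Z_n\times\{0\}$, $\Z_n\times\{1\}$.

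First, I will let $\cR_1$ denote the set of $\gamma \frac{n(n-1)}{2}$ SQS$(2n)$s produced by the DLS Construction applied with the PA$_\gamma(2,n,n)$, and let $\cR_2$ denote the set of $g(n-3)(n-1)$ SQS$(2n)$s produced by the DB Construction applied with the LS$(3,4,n;g(n-1))$ (so that each of the $n-1$ permutations from the auxiliary $(n-1)\times(n-1)$ Latin square is paired with $g(n-3)$ copies of SQS$(n)$ from the large set with multiplicity). Set $\cR \eqdef \cR_1\cup \cR_2$. The main claim is then that $\cR$ is an LS$(3,4,2n;g(n-1))$.

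Next I will perform the configuration-by-configuration bookkeeping, substituting $\gamma=2g$. For a quadruple in configuration $(4,0)$ or $(0,4)$, Lemma~\ref{lem:sumDLS} contributes $0$ and Lemma~\ref{lem:sumDB} contributes $g(n-1)$, giving the required $g(n-1)$. For a quadruple in configuration $(3,1)$ or $(1,3)$, Lemma~\ref{lem:sumDLS} contributes $\frac{\gamma(n-1)}{2}=g(n-1)$ and Lemma~\ref{lem:sumDB} contributes $0$, again giving $g(n-1)$. For a quadruple in configuration $(2,2)$, Lemma~\ref{lem:sumDLS} contributes $\gamma=2g$ and Lemma~\ref{lem:sumDB} contributes $g(n-3)$, summing to $g(n-1)$. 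Hence every quadruple of $\Z_n\times\Z_2$ is covered exactly $g(n-1)$ times by the systems in $\cR$.

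Finally, I will sanity-check the number of systems: $|\cR|=\gamma\frac{n(n-1)}{2}+g(n-3)(n-1)=gn(n-1)+g(n-3)(n-1)=g(n-1)(2n-3)$, which matches $g(n-1)\binom{2n-3}{1}$, the number of SQS$(2n)$s in an LS$(3,4,2n;g(n-1))$. Since each element of $\cR$ is an SQS$(2n)$ on $\Z_n\times\Z_2$ and the multiplicity condition holds for every quadruple, $\cR$ is the desired LS$(3,4,2n;g(n-1))$. The only nontrivial step is the algebraic identity $2g+g(n-3)=g(n-1)$ for configuration $(2,2)$, and this is exactly what forces the hypothesis $\gamma=2g$; the rest of the argument is purely a matter of collating the two lemmas.
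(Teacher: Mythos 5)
Your proposal is correct and follows essentially the same route as the paper's proof: it combines Lemma~\ref{lem:sumDLS} and Lemma~\ref{lem:sumDB} and checks, configuration by configuration, that each quadruple of $\Z_n\times\Z_2$ is covered exactly $g(n-1)=\frac{\gamma(n-1)}{2}$ times, with the identity $2g+g(n-3)=g(n-1)$ handling configuration $(2,2)$. The only difference is your added count of the total number of systems, which the paper omits as it is implied by the multiplicity condition.
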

\begin{proof}
By Lemma~\ref{lem:sumDLS}, if the DLS Construction is applied with a PA$_\gamma (2,n,n)$, then each
quadruple from configuration (3,1) or configuration (1,3) is contained once in exactly ${\mu = \frac{\gamma (n-1)}{2}}$
systems. Each quadruple from configuration (2,2) is contained once in exactly $\gamma$ systems.
By Lemma~\ref{lem:sumDB} is applied with an LS$(3,4,n;\frac{\gamma (n-1)}{2})$, then each quadruple
from configuration (2,2) is contained in exactly $\frac{\gamma (n-3)}{2} =\mu -\gamma$ systems.
Finally, each quadruple from configuration (4,0) or configuration (0,4) is contained in exactly
$\mu$ of the systems. Thus, there exists an LS$(3,4,2n; \mu)$.
\end{proof}
Applying Theorem~\ref{thm:sumDoubling} with PA$_2(2,10,10)$ (see Theorem~\ref{thm:PA_pairs})
and LS$(3,4,10;9)$ (see Theorem~\ref{thm:LS10}) we have
\begin{corollary}
There exists an LS$(3,4,20;9m)$ for each $m \geq 1$.
\end{corollary}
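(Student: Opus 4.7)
The plan is to read off the corollary essentially directly from Theorem~\ref{thm:sumDoubling}, supplying the two ingredients that the theorem requires for the case $n=10$, and then bootstrapping the multiplicity from $9$ up to $9m$ by taking copies.

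First I will apply Theorem~\ref{thm:sumDoubling} with parameters $n=10$, $\gamma=2$, $g=1$, so that $g(n-1)=9$. The doubling construction needs a PA$_{\gamma}(2,n,n)$ for the DLS half; in our case this is a PA$_2(2,10,10)$, whose existence is guaranteed by item~(3) of Theorem~\ref{thm:PA_pairs}. It also needs an LS$(3,4,n;g(n-1))$ for the DB half; in our case this is an LS$(3,4,10;9)$, whose existence is guaranteed by Theorem~\ref{thm:LS10} (which supplies LS$(3,4,10;\mu)$ for every $\mu\ge 2$, in particular $\mu=9$). Plugging these into Theorem~\ref{thm:sumDoubling} yields an LS$(3,4,20;9)$, which is the $m=1$ case of the corollary.

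To promote the multiplicity from $9$ to $9m$ for arbitrary $m\ge 1$, I will form the disjoint union (as a multiset) of $m$ copies of the LS$(3,4,20;9)$ produced above. Since every quadruple of the underlying $20$-set is covered exactly $9$ times within a single LS$(3,4,20;9)$, it is covered exactly $9m$ times across the $m$ copies; the resulting collection therefore satisfies the defining property of an LS$(3,4,20;9m)$. The observation that taking copies preserves the large-set-with-multiplicity property is the same one made in the introduction, where it is noted that LS$(t,k,n;1)$ implies LS$(t,k,n;\mu)$ for every $\mu\ge 1$.

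There is no real obstacle here: both ingredients needed to invoke Theorem~\ref{thm:sumDoubling} are available off the shelf, and the step from $\mu=9$ to $\mu=9m$ is the trivial copy-and-union trick. The only thing to verify carefully is the arithmetic match $\gamma=2g$ with $g=1$ so that the same multiplicity $g(n-1)=9$ appears on both the DLS and DB sides of Theorem~\ref{thm:sumDoubling}, which is exactly what the PA$_2(2,10,10)$ and LS$(3,4,10;9)$ provide.
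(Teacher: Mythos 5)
Your proposal is correct and follows essentially the same route as the paper: the corollary is stated there as a direct application of Theorem~\ref{thm:sumDoubling} with the PA$_2(2,10,10)$ of Theorem~\ref{thm:PA_pairs} and the LS$(3,4,10;9)$ of Theorem~\ref{thm:LS10}, giving an LS$(3,4,20;9)$, with the passage to multiplicity $9m$ being the same copy-and-union observation the paper already invokes in the introduction. The parameter check $\gamma=2g$ with $g=1$, $n=10$ is exactly right.
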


\subsection{QLS Construction (Quadrupling)}
\label{sec:QLS}

The QLS Construction is a quadrupling construction, which was presented first by Lindner~\cite{Lin85}, while another variant was
introduced by Etzion and Hartman~\cite{EtHa91}. In these two papers the purpose of the construction was to obtain
pairwise disjoint Steiner quadruple systems. Recently, another simpler variant to obtain SQS$(4n)$ with good sequencing
was presented by Blackburn and Etzion~\cite{BlEt20}. In this section, the structure of the
systems obtained in this construction will be described. The exact formulation and the formal steps
of this construction, in the variant required for our constructions,
will be described in details when it will be used in Section~\ref{sec:fourn} and Section~\ref{sec:formal_steps}.

In the doubling construction, we consider only five configurations $(4,0)$,
$(3,1)$, $(2,2)$, $(1,3)$, and $(0,4)$. In the quadrupling construction we have to consider
thirty five configurations as follows.
The $4n$ points of $\Z_n \times \Z_4$ are partitioned into four equal parts, $\Z_n \times \{ i\}$, $i \in \Z_4$. The possible
configurations of quadruples are categorized into five groups.

\noindent
{\bf Group 1:} In this group there are four configurations $(4,0,0,0)$, $(0,4,0,0)$,
$(0,0,4,0)$, and $(0,0,0,4)$.

\noindent
{\bf Group 2:} In this group there are twelve configurations $(3,1,0,0)$, $(1,3,0,0)$, $(3,0,1,0)$, $(1,0,3,0)$, $(3,0,0,1)$,
$(1,0,0,3)$, $(0,3,1,0)$, $(0,1,3,0)$, $(0,3,0,1)$, $(0,1,0,3)$, $(0,0,3,1)$, and $(0,0,1,3)$.

\noindent
{\bf Group 3:} In this group we have the six configurations $(2,2,0,0)$, $(2,0,2,0)$, $(2,0,0,2)$,
$(0,2,2,0)$, $(0,2,0,2)$, and $(0,0,2,2)$.

\noindent
{\bf Group 4:} In this group there are twelve configurations $(2,1,1,0)$, $(2,1,0,1)$, $(2,0,1,1)$, $(1,2,1,0)$, $(1,2,0,1)$,
$(0,2,1,1)$, $(1,1,2,0)$, $(1,0,2,1)$, $(0,1,2,1)$, $(1,1,0,2)$, $(1,0,1,2)$, and $(0,1,1,2)$.

\noindent
{\bf Group 5:} In this group there is one configuration $(1,1,1,1)$.

The QLS Construction is based on quadruples from Groups 2, 3, 4, and 5.
We start with an SQS$(n)$ defined on $\Z_n$ and an $n \times n$ Latin square with no $2 \times 2$ subsquares.
The DLS Construction is applied to obtain a set $\cT$ of $n$ pairwise disjoint SQS$(2n)$.
We are now in the position to describe the framework of our variant of the quadrupling construction to construct $3n$ pairwise
disjoint SQS$(4n)$.
An SQS$(4n)$ in this construction is one of three types, Type A1, Type A2, and Type A3.
A set with $3 n^3$ quadruples from configuration $(1,1,1,1)$ is chosen, and partitioned
into three subsets of size $n^3$ with properties which will be defined in the sequel.
One subset will be in Type A1, one in Type A2, and one in Type A3, with each type consisting of $n$ SQS$(4n)$.
We will describe first the configurations for Type A1. Each SQS$(2n)$ from $\cT$ is embedded on the point set
$Z_n \times \{ 0,1 \}$ and on the point set $Z_n \times \{2,3\}$.
The two related Steiner quadruple systems $S(3,4,2n)$ are part of an SQS$(4n)$
Therefore, the SQS$(4n)$ contains quadruples
from configurations $(3,1,0,0)$, $(1,3,0,0)$,  $(0,0,3,1)$, and $(0,0,1,3)$.
It contains $\binom{n}{3}$ quadruples from each one of these four configurations.
It also contains quadruples from configurations $(2,2,0,0)$, and $(0,0,2,2)$,
$\binom{n}{2}$ quadruples from each configuration.
The SQS$(4n)$ of Type A1 also contains quadruples from configurations $(2,0,1,1)$, $(0,2,1,1)$, $(1,1,2,0)$,
and $(1,1,0,2)$. There are $\binom{n}{2} n$ quadruples in each one of these configurations.
The last configuration in Type A1 is $(1,1,1,1)$ and there are $n^2$ quadruples from this configuration
in each system.

Similarly an SQS$(4n)$ in Type A2 is constructed. It has quadruples from configurations
$(3,0,0,1)$, $(1,0,0,3)$,  $(0,1,3,0)$, $(0,3,1,0)$, $(2,0,0,2)$, $(0,2,2,0)$,
$(2,1,1,0)$, $(0,1,1,2)$, $(1,2,0,1)$, $(1,0,2,1)$, and $(1,1,1,1)$.
Type A3 has quadruples from configurations
$(3,0,1,0)$, $(1,0,3,0)$,  $(0,1,0,3)$, $(0,3,0,1)$, $(2,0,2,0)$, $(0,2,0,2)$,
$(2,1,0,1)$, $(0,1,2,1)$, $(1,2,1,0)$, $(1,0,1,2)$, and $(1,1,1,1)$.

This kind of quadrupling construction was described in~\cite{EtHa91,Lin85}.
In this paper, a perpendicular array PA$_\gamma (2,n,n)$ will be used as indicated in Lemma~\ref{lem:sumDLS}
instead of the $n \times n$ Latin square without no $2 \times 2$ subsquares. Such a perpendicular array
has $\gamma \binom{n}{2} = \frac{\gamma (n-1)}{2} n =\mu n$ permutations, i.e., the construction is applied
$\frac{\gamma (n-1)}{2}$ times compared to one application with an $n \times n$ Latin square.
To conclude, the total number of systems in Type A1 will be $\gamma \binom{n}{2}=\mu n$.
The same number of systems will be in Type A2 and in Type A3.

\section{LS$(3,4,4n;g)$, for $n \equiv 2$ or $4~(\text{mod}~6)$, $n \geq 10$}
\label{sec:five}

In this section we prove the first result which leads to the main result of this work.

\begin{theorem}
\label{thm:sum4n}
If there exists a large set LS$(3, 4, n;\mu)$ and a perpendicular
array PA$_{\gamma}(2,n,n)$, where $\mu=\frac{(n-1)\gamma}{2}$,
then there exists a large set LS$(3,4,4n;\mu)$.
\end{theorem}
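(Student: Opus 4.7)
The plan is to partition $\Z_{4n}$ as $\Z_n\times\Z_4$ into four parts $P_i=\Z_n\times\{i\}$, $i\in\Z_4$, and to build the required LS$(3,4,4n;\mu)$ as the disjoint union of two families of SQS$(4n)$s. The first family, of size $3\mu n$, comes from a QLS-style quadrupling construction; the second, of size $\mu(n-3)$, comes from a DB-style quadrupling construction fed by the given LS$(3,4,n;\mu)$. The count $3\mu n+\mu(n-3)=\mu(4n-3)$ matches the size of an LS$(3,4,4n;\mu)$, so the remaining task is to show that every $4$-subset of $\Z_n\times\Z_4$ appears in exactly $\mu$ of the constructed systems. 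The natural way to organize this verification is to run through the 35 configurations of a $4$-subset relative to $\{P_0,P_1,P_2,P_3\}$, partitioned into the five groups laid out in Section~\ref{sec:QLS}.

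For the first family I would follow the QLS Construction sketched at the end of Section~\ref{sec:QLS}, but replacing the $n\times n$ Latin square of its doubling substep by the perpendicular array PA$_\gamma(2,n,n)$; the behaviour of that substep is then exactly what Lemma~\ref{lem:sumDLS} records. This produces three subtypes A1, A2, A3, each of size $\mu n$, corresponding to the three ways of pairing up $\{P_0,P_1,P_2,P_3\}$. Applying Lemma~\ref{lem:sumDLS} with $\gamma=2\mu/(n-1)$ gives that each Group~2 quadruple is covered exactly $\mu=\gamma(n-1)/2$ times within the appropriate subtype, and each Group~3 quadruple is covered $\gamma$ times there. The Group~4 and Group~5 quadruples are produced by the partition of the $3n^3$ configuration-$(1,1,1,1)$ quadruples into three size-$n^3$ classes (one per subtype) mentioned in Section~\ref{sec:QLS}, whose properties will be chosen so that each Group~4 and each Group~5 quadruple is covered exactly $\mu$ times across the A family.

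For the second family I would use a quadrupling analogue of the DB Construction: for each of the $\mu(n-3)$ SQS$(n)$s in the given LS$(3,4,n;\mu)$, install it on each of the four parts $P_i$ (supplying the Group~1 quadruples), use one-factorizations of $K_n$ together with an $(n-1)\times(n-1)$ Latin square to link each of the six pairs of parts by the DB mechanism (supplying the remaining $(2,2)$-type quadruples), and include whatever mixed quadruples are needed to complete each system to a bona fide SQS$(4n)$. By Lemma~\ref{lem:sumDB} applied on each part (for Group~1) and on each pair of parts (for Group~3), this family contributes the full $\mu$-coverage needed for Group~1 and an extra $\mu-\gamma$ coverage for Group~3, which combined with the $\gamma$ coverage from Type~A yields exactly $\mu$ in Group~3 as well.

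The main obstacle is the careful orchestration of Type~B and the precise specification of the $(1,1,1,1)$-partition in Type~A: one must show that the six pair-wise DB gadgets and the four copies of the inner SQS$(n)$ fit together without any $3$-subset being covered twice within a single Type~B system, and that the three classes of $(1,1,1,1)$-quadruples assigned to A1, A2, A3 are distributed uniformly enough to give exactly $\mu$-coverage of all Group~4 and Group~5 quadruples once combined with the PA$_\gamma(2,n,n)$ permutations. Once these structural checks are in hand, verification of the $\mu$-coverage for all 35 configurations reduces to the bookkeeping of Lemmas~\ref{lem:sumDLS} and~\ref{lem:sumDB}, and the theorem follows.
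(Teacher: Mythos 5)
Your overall architecture is the same as the paper's: $3\mu n$ Type~A systems built from the DLS output with the perpendicular array (subtypes A1, A2, A3 on the three pairings of the four parts), plus $\mu(n-3)$ Type~B systems carrying the given LS$(3,4,n;\mu)$ on each part and DB-style $(2,2)$-gadgets on each pair of parts, with the coverage ledger Group~1: $0+\mu$, Group~2 and Group~4: $\mu+0$, Group~3: $\gamma+(\mu-\gamma)$. That skeleton and those counts are correct. However, what you explicitly defer as ``the main obstacle'' is not a routine structural check --- it is the actual mathematical content of the proof, and your proposal does not supply it. Concretely, you never construct the Group~4 blocks (the paper builds them from a one-factorization $F_r$ of $K_n$ crossed with shifted pairs $(c,\cdot),(c+i+r+j,\cdot)$, and proving that each Group~4 quadruple lands in exactly $\mu$ systems requires that explicit form), you never specify the $(1,1,1,1)$-quadruples of either type, and you never verify that the resulting block sets are Steiner quadruple systems at all (the paper's Lemmas~\ref{lem:correctA1} and~\ref{lem:sysB} do this by a triple-coverage plus block-count argument that depends on the explicit blocks). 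Without these definitions there is nothing to which your ``bookkeeping'' can be applied.

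There is also a concrete error in your Group~5 accounting: you assert that each configuration-$(1,1,1,1)$ quadruple is covered exactly $\mu$ times \emph{across the A family alone}. This cannot be right by counting: Type~A contains $3\mu n$ systems with $n^2$ such quadruples each, i.e.\ $3\mu n^3$ slots, while $\mu$-coverage of all $n^4$ such quadruples requires $\mu n^4$ slots. In the paper each $(1,1,1,1)$ quadruple appears only about $3\mu/n$ times in Type~A (via the sets $L_j, L'_j, L''_j$), and the deficit is made up in Type~B by the sets $D_j$ and $E_j$; the delicate point, handled by Lemma~\ref{lem:DEL1}, Lemma~\ref{lem:DEL2} and Corollary~\ref{cor:cof1111}, is that the Type~A sets are literally members of the family $\{D_j\}\cup\{E_j\}$, so that the leftover $(n-3)\mu$ sets of size $n^3$ can be distributed among the Type~B systems to bring every $(1,1,1,1)$ quadruple up to exactly $\mu$. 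This joint A/B mechanism for Group~5 is missing from your proposal, and its absence (together with the unspecified Group~4 blocks) leaves a genuine gap.
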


Theorem~\ref{thm:sum4n}, which will be proved in this section, is a special case of the main result, and it will lead to
the main result of this work which will be proved in the next section.

\begin{theorem}
\label{thm:rec4n}
If there exists a large set LS$(3, 4, n;\mu)$ and a perpendicular
array PA$_{\gamma}(2, n,n)$, $\mu={(n-1)\gamma\over 2}$,
then there exists a large set LS$(3, 4, 2^m n;\mu)$, for each $m \geq 0$.
\end{theorem}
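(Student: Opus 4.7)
The plan is to construct LS$(3,4,2^m n;\mu)$ directly from the input LS$(3,4,n;\mu)$ and PA$_\gamma(2,n,n)$ by a $2^m$-pling construction generalizing the QLS quadrupling of Section~\ref{sec:QLS} and the $4n$ construction of Theorem~\ref{thm:sum4n}. Iterating Theorem~\ref{thm:sumDoubling} is not available here: each doubling step would demand a fresh perpendicular array PA$_\gamma(2, 2^i n, 2^i n)$, while only PA$_\gamma(2,n,n)$ is assumed to exist.

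I would identify the $2^m n$ points with $\Z_n \times \F_2^m$, partitioned into the $2^m$ parts $\Z_n \times \{v\}$, $v \in \F_2^m$. A size count shapes the construction: an LS$(3,4,2^m n;\mu)$ contains
\begin{equation*}
\mu(2^m n - 3) = \mu(n-3) + \mu n(2^m - 1)
\end{equation*}
systems. This suggests splitting the construction into (i) a \emph{diagonal} pool of $\mu(n-3)$ SQS$(2^m n)$, each placing one SQS$(n)$ of the input LS on every part (handling $(4,0,\ldots,0)$-configurations); and (ii) $2^m - 1$ pools of $\mu n$ systems each, indexed by the nonzero elements of $\F_2^m$ via the Cayley one-factorization $F_v = \{\{u, u+v\} : u \in \F_2^m\}$ of $K_{2^m}$. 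Each pool $F_v$ plays the role of a Type A1, A2, or A3 of the QLS Construction, recovering the $m = 2$ case exactly.

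Inside pool $F_v$, each SQS$(2^m n)$ is assembled from three ingredients: for every pair $\{u, u+v\} \in F_v$, a DLS-doubling between parts $u$ and $u+v$ driven by PA$_\gamma(2,n,n)$ (handling $(3,1)$, $(1,3)$, and part of $(2,2)$ between the two parts by Lemma~\ref{lem:sumDLS}); a DB-doubling between the same pair driven by the input LS$(3,4,n;\mu)$ (handling $(4,0)$, $(0,4)$ and the remainder of $(2,2)$ by Lemma~\ref{lem:sumDB}); and cross-pair constituents handling the $(2,1,1)$-type configurations on three parts and the $(1,1,1,1)$ configurations on four parts belonging to two different pairs of $F_v$. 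The choice $\mu = (n-1)\gamma/2$ is exactly what makes the DLS and DB contributions to each $(2,2)$ configuration add up to $\mu$, as already observed in Theorem~\ref{thm:sumDoubling}.

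The main obstacle is the treatment of $(1,1,1,1)$-configurations on four distinct parts $\{v_1,v_2,v_3,v_4\} \subset \F_2^m$. The three perfect matchings of this $4$-set are supported by pools of the Cayley one-factorization iff $v_1+v_2+v_3+v_4 = 0$, in which case they sit in $F_{v_1+v_2}$, $F_{v_1+v_3}$, and $F_{v_1+v_4}$ respectively; this directly generalizes the $m=2$ case, where every $4$-subset of parts sums to zero. When the four parts sum to a nonzero element, the matching does not lie in a single pool and a separate cross-pool mechanism is required; organizing this consistently across all $4$-subsets of parts is the technically demanding step. Once the construction is set up, verifying that every $4$-subset is covered with multiplicity exactly $\mu$ reduces to a configuration-by-configuration count that follows the template of the $4n$ analysis of Theorem~\ref{thm:sum4n} in Section~\ref{sec:five}.
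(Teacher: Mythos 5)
Your global architecture matches the paper's: the points are $\Z_n\times\Z_2^m$, the $\mu(2^m n-3)$ systems split into a diagonal pool of $(n-3)\mu$ systems carrying the input LS$(3,4,n;\mu)$ on every part, and $2^m-1$ pools of $\mu n$ systems indexed by the nonzero elements of $\Z_2^m$ (equivalently by the Cayley one-factorization of $K_{2^m}$), with the DLS doubling driven by PA$_\gamma(2,n,n)$ placed on each pair $\{u,u+v\}$. Your opening observation that one cannot simply iterate Theorem~\ref{thm:sumDoubling} for lack of a PA$_\gamma(2,2^i n,2^i n)$ is also exactly the paper's motivation. However, there is a concrete error in your allocation of ingredients: you place a DB-doubling, driven by the input LS$(3,4,n;\mu)$, \emph{inside} each pool $F_v$ to handle the $(4,0)$ and $(0,4)$ configurations and the residual $\mu-\gamma$ of the $(2,2)$ configurations. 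This cannot work: the quadruples lying entirely in one part are already covered $\mu$ times by your diagonal pool, and since each part belongs to one pair of every pool $F_v$, $v\neq 0$, your scheme would cover them far more than $\mu$ times in total; moreover the DB construction of Lemma~\ref{lem:sumDB} produces $(n-3)\mu$ systems, which matches the diagonal pool, not the $\mu n$ systems of a pool $F_v$. In the correct construction (and in the $m=2$ case you claim to recover exactly) the pool systems contain \emph{no} quadruples from Group~1, and both the Group~1 quadruples and the $\mu-\gamma$ deficit of every between-parts $(2,2)$ configuration are supplied by the diagonal pool, via the one-factorization pairs $F_r,F_{M(j,r)}$ placed between every ordered pair of parts.

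The more serious issue is that you explicitly leave open the step you correctly identify as the crux: the $(1,1,1,1)$ configurations over four parts $\{x,y,z,w\}$ with $x+y+z+w\neq 0$, which are not captured by any pairing inside a single pool. The paper resolves this with the Boolean Steiner quadruple systems: the $4$-sets of parts with sum $i\neq 0$ are exactly the Type (B.1) blocks of the Boolean SQS $B_i$, each such $4$-set occurring in exactly one $B_i$, and on each such $4$-set the system $\cP_{(i,r)}$ receives the $n^3$ blocks $\{(a,x),(b,y),(c,z),(a+b+c+r,w)\}$, $a,b,c\in\Z_n$; as $r$ ranges over $0\leq r\leq n\mu-1$ each residue class modulo $n$ is hit $\mu$ times, so every such quadruple lands in exactly $\mu$ systems of pool $i$ and in no other pool. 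The $4$-sets with sum zero (Type (B.2)) are instead handled by embedding the Type A1, A2, A3 quadruples of the $(4n)$-construction according to which of the three pairings of the $4$-set belongs to the pool at hand, together with the $D_j$/$E_j$ bookkeeping of Lemmas~\ref{lem:DEL1}--\ref{lem:DEL2} for the diagonal pool. Since this mechanism is the actual content of the proof beyond the $m=2$ case, and your proposal both misassigns one ingredient and stops short of the decisive construction, it is not yet a proof.
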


The first step in the proof of Theorem~\ref{thm:sum4n} is to apply the two doubling constructions,
the DLS Construction and the DB Construction, in the right combination to obtain
a large set of SQS($2n$) with multiplicity. This was summarized in Theorem~\ref{thm:sumDoubling}.
The rest of the proof of Theorem~\ref{thm:sum4n} will be based on the QLS Construction,
using the consequences of Lemma~\ref{lem:sumDLS} and Lemma~\ref{lem:sumDB} which are
partially summarized in Theorem~\ref{thm:sumDoubling}.
The quadrupling construction in this section, which is a variant of the QLS Construction,
will be called the $(4n)$-Construction. This Step of the construction is presented in Section~\ref{sec:fourn}
and Section~\ref{sec:formal_steps}. In Section~\ref{sec:fourn}, the ideas of the construction are
presented and in Section~\ref{sec:formal_steps} the formal definition for the blocks of the construction and the
proofs for the correctness of the construction, are given.
In these steps the QLS Construction is adapted to obtain a large set with multiplicity
which will be presented in the sequel.

\subsection{The $(4n)$-Construction - Introduction}
\label{sec:fourn}

Assume that on the point set $\Z_n$ there exists a large set LS$(3,4,n;\mu)$,
and a perpendicular array PA$_\gamma (2,n,n)$, where $\mu = \frac{(n-1)\gamma}{2}$ (which implies
that $\gamma$ is even). Note, that the number of permutations in
this perpendicular array is $n \mu = \frac{n(n-1)\gamma}{2}$.
Such a large set LS$(3,4,n;\mu)$ consists of $\mu (n-3)$ systems of SQS$(n)$.
In the recursive construction to form a large set LS$(3,4,n \cdot 2^m;\mu)$ presented in Section~\ref{sec:mainC},
the first step is a construction of a large set LS$(3,4,4n;\mu)$ which is presented
in this section.
Such a large set consists of $\mu(4n-3)$ systems of SQS$(4n)$.
We start by applying a variant of the QLS Construction to obtain $3 \mu n$ systems of SQS$(4n)$.
After these $3 \mu n$ systems of SQS$(4n)$ are obtained we continue with a variant of
the DB Construction to obtain $\mu (n-3)$ systems of SQS$(4n$).
This part of the construction is recursive and is based on another quadrupling construction
in which the large set LS$(3,4,n;\mu)$ is used.
This part will also use a one-factorization $F = \{ F_0 , F_1 , \ldots ,F_{n-2} \}$ of the complete
graph $K_n$ on vertex set $\Z_n$. We start with the construction of the large set
LS$(3,4,4n;\mu)$ on the point set $\Z_n \times \Z_4$. Let $M$ be the perpendicular array PA$_\gamma (2,n,n)$.
We apply the DLS Construction with any SQS$(n)$ and the $n \mu$ permutations of $M$ to obtain a set~$\cR$
with $n \mu =\frac{n(n-1)\gamma}{2}$ systems of SQS$(2n)$ on the point set $\Z_n \times \Z_2$.
Let $\cR_i$, $1 \leq i \leq \frac{n(n-1)\gamma}{2}$ be the $i$th such SQS$(2n)$ in $\cR$.
The $(4n)$-construction for the LS$(3,4,4n;\mu)$
has two types of SQS$(4n)$, Type A and Type B.

\noindent
{\bf Type A:}

In this type there are quadruples from the configurations in Groups 2, 3, 4, and 5.
The systems in Type A are of the three sub-types, Type~A1, Type A2, and Type A3, as described in Section~\ref{sec:QLS}.
Note that the total number of systems (SQS$(4n)$) of Type A1 is $n \mu =\gamma \binom{n}{2}$, which is
the same as the number of systems (SQS$(2n)$) in $\cR$.
The same number of systems are in Type A2 and the same number is also in Type A3. Thus, the total number of systems
in Type A is $3 n \mu$.

\noindent
{\bf Type B:}

Each system of Type B contains quadruples from Groups 1, 3, and 5.
It contains $n^3$ quadruples of configuration $(1,1,1,1)$,
$(n-1)\frac{n^2}{4}$ from each one of the configurations
$(2,2,0,0)$, $(2,0,2,0)$, $(2,0,0,2)$, $(0,2,2,0)$, $(0,2,0,2)$, and $(0,0,2,2)$.
It contains also $\binom{n}{3} {\Huge / } 4$ quadruple from each one of the configurations
$(4,0,0,0)$, $(0,4,0,0)$, $(0,0,4,0)$, and $(0,0,0,4)$.

The total number of systems in Type B will be $\gamma \frac{n-1}{2} (n-3)=(n-3) \mu$.

\noindent
{\large {\bf Summary:}}

To summarize, the total number of systems in Type A and Type B is $(4n-3) \mu$ which is the number of
systems required in a large set LS$(3,4,4n;\mu)$. In the next subsection the formal definition of
the blocks in these systems of all types are presented.

\subsection{The $(4n)$-Construction - Definitions and Proofs}
\label{sec:formal_steps}

In this subsection the formal definition for all the blocks in Type A and in Type B will be
presented. Proofs that the defined blocks yield an LS$(3,4,4n;\mu)$ are also given
in this subsection.
The blocks are formed on the point set $\Z_n \times \Z_4$; recall also that $\mu = \frac{\gamma (n-1)}{2}$.
For the construction, the following structures are required as input:
\begin{itemize}
\item Let $F= \{ F_0,F_1,\ldots,F_{n-2}\}$ be a one-factorization of $K_n$ on the point set $\Z_n$.

\item A set $\{ \cS_{(i,j)} ~:~ 1 \leq i \leq n,~ 0 \leq j \leq \mu-1 \} = \{ \cR_i ~:~ 1 \leq i \leq n \mu \}$
with $n \mu$ systems of SQS$(2n)$ obtained from the perpendicular array PA$_\gamma (2,n,n)$ via the DLS Construction.
\end{itemize}

\noindent
{\bf Type A1:}

Let $\cT_{(i,j)}$, be the $(i,j)$-th system of Type A1, $1 \leq i \leq n$, $0 \leq j \leq \mu-1$, which
is defined as follows.

The SQS($2n$) $\cS_{(i,j)}$, constructed on the point set $\Z_n \times \Z_2$,
is embedded on the point set $\Z_n \times \{0,1\}$ and on the point set $\Z_n \times \{2,3\}$.
These sets of quadruples contain quadruples from Group 2 and from Group 3.
They form the first set of quadruples which are constructed in $\cT_{(i,j)}$.

Next, we form the following four sets of quadruples from Group 4 in~$\cT_{(i,j)}$.
$$
\{ \{ (a,0),(b,0),(c,2),(c+i+r+j,3) \} ~:~ 0 \leq r \leq n-2,~ \{ a,b \} \in F_r,~ c \in \Z_n \},
$$
$$
\{ \{ (a,1),(b,1),(c,2),(c+i+r+j,3) \} ~:~ 0 \leq r \leq n-2,~ \{ a,b \} \in F_r,~ c \in \Z_n \},
$$
$$
\{ \{ (a,2),(b,2),(c,0),(c+i+r,1) \} ~:~ 0 \leq r \leq n-2,~ \{ a,b \} \in F_r,~ c \in \Z_n \},
$$
$$
\{ \{ (a,3),(b,3),(c,0),(c+i+r,1) \} ~:~ 0 \leq r \leq n-2,~ \{ a,b \} \in F_r,~ c \in \Z_n \}.
$$
These sets of quadruples form the second set of quadruples in $\cT_{(i,j)}$.

The last (third) set of quadruples in $\cT_{(i,j)}$ is from group 5:
$$
\{ \{ (a,0),(a+i+n-1,1),(b,2),(b+i+n-1+j,3) \} ~:~ a,b \in \Z_n  \}.
$$
\begin{lemma}
\label{lem:correctA1}
Each set $\cT_{(i,j)}$, $1 \leq i \leq n$, $0 \leq j \leq \mu-1$, is an SQS$(4n)$.
\end{lemma}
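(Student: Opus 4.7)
My plan is to show directly that every $3$-subset of $\Z_n\times\Z_4$ lies in exactly one quadruple of $\cT_{(i,j)}$. I will begin by counting blocks: the two embedded copies of $\cS_{(i,j)}$ contribute $2\binom{2n}{3}/4$ quadruples, each of the four Group~4 families contributes $(n-1)\cdot(n/2)\cdot n = n^2(n-1)/2$ quadruples (indexed by $r\in\{0,\dots,n-2\}$, the pair $\{a,b\}\in F_r$, and $c\in\Z_n$), and the Group~5 family contributes $n^2$ quadruples. Summing yields $\binom{4n}{3}/4$, which is exactly the block count of an SQS$(4n)$. Consequently, once every $3$-subset is shown to lie in at least one block of $\cT_{(i,j)}$, uniqueness follows automatically.

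Next, I classify triples by their configuration $(c_0,c_1,c_2,c_3)$ relative to the partition into four parts. Configurations supported in $\{0,1\}$, namely $(3,0,0,0)$, $(2,1,0,0)$, $(1,2,0,0)$, $(0,3,0,0)$, are covered by the embedded copy of $\cS_{(i,j)}$ on $\Z_n\times\{0,1\}$, which is itself an SQS$(2n)$; configurations supported in $\{2,3\}$ are handled symmetrically by the second embedded copy. A crossing triple of configuration $(2,0,1,0)$, a pair on one side together with a single point on the other, is covered by a unique block of the first Group~4 family: the pair determines a unique one-factor $F_r$, the cross-point plays the role of $c$, and then the fourth point $c+i+r+j$ is forced. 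The remaining ``pair plus single'' crossing configurations are handled symmetrically by the other three Group~4 families.

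The delicate case consists of triples with one point in each of three distinct parts, namely configurations $(1,1,1,0)$, $(1,1,0,1)$, $(1,0,1,1)$, $(0,1,1,1)$. As a representative, consider $\{(x,0),(y,1),(z,2)\}$. The only Group~4 family that can host it is the third one, $\{(a,2),(b,2),(c,0),(c+i+r,1)\}$, which forces $c=x$ and $r\equiv y-x-i\pmod{n}$; the block is valid whenever this residue lies in $\{0,\dots,n-2\}$, with $z$ selecting the unique pair of $F_r$ containing it. The single residue that fails is $r\equiv n-1\pmod n$, equivalently $y\equiv x+i+n-1\pmod n$, but in exactly that case the Group~5 quadruple with $a=x$ and $b=z$ already contains $(x,0),(y,1),(z,2)$ together with the uniquely determined fourth point $(z+i+n-1+j,3)$. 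The other three rotations behave analogously, each with a distinct Group~4 family absorbing $n-1$ of the $n$ residues and the same Group~5 family mopping up the ``missing'' residue.

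The main obstacle, and essentially the only subtle point, is keeping the bookkeeping of the offsets $i+r$, $i+r+j$, $i+n-1$, and $i+n-1+j$ straight across the four rotations of the $(1,1,1,\cdot)$ configuration, and verifying in each case that the residue falling outside $\{0,\dots,n-2\}$ is precisely the one picked up by Group~5. Note that $j$ shows up in exactly the two rotations that involve a point in part~$3$, matching the offsets in the Group~5 blocks. Once this complementarity is checked for a representative of each of the four rotations, the block-count reduction from the first paragraph closes the argument.
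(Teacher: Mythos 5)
Your proof is correct and follows essentially the same route as the paper's: cover every triple by a case analysis on configurations (with the $(1,1,1,\cdot)$-type triples split between a Group 4 family for residues $r\le n-2$ and the Group 5 family absorbing $r=n-1$), then count blocks to get $\binom{4n}{3}/4$ and deduce uniqueness of coverage. The only quibble is your parenthetical that $j$ appears ``in exactly the two rotations that involve a point in part $3$'' --- three of the four rotations meet part $3$, and $j$ appears in the two whose hosting Group 4 family links a part-$2$ point to a part-$3$ point --- but this remark is not load-bearing and the argument stands.
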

\begin{proof}
Since $\cS_{(i,j)}$ is an SQS($2n$), it follows that all triples from configurations
$(2,1,0,0)$, $(1,2,0,0)$, $(0,0,2,1)$, $(0,0,1,2)$, $(3,0,0,0)$, $(0,3,0,0)$,
$(0,0,3,0)$, $(0,0,0,3)$ are contained in quadruples of $\cT_{(i,j)}$.
Given a triple from configuration $(2,0,1,0)$, say $X=\{ (a,0),(b,0),(c,2)\}$, there is
a unique $r$, $0 \leq r \leq n-2$, such that $\{ a,b \} \in F_r$, and $X$ is contained in
the quadruple $\{ (a,0),(b,0),(c,2),(c+i+r+j,3) \}$ of $\cT_{(i,j)}$.
Similarly, each triple from configurations $(2,0,0,1)$, $(0,2,1,0)$, $(0,2,0,1)$, $(1,0,2,0)$,
$(0,1,2,0)$, $(1,0,0,2)$, and $(0,1,0,2)$, is contained in one of the quadruples of $\cT_{(i,j)}$.
Finally, consider a triple $X=\{ (c,0),(d,1),(b,2)\}$ from configuration $(1,1,1,0)$. Consider the quadruple
$Y=\{ (a,2),(b,2),(c,0),(c+i+r,1) \}$ of $\cT_{(i,j)}$. If $d=c+i+r$, where $0 \leq r \leq n-2$, then the
triple $X$ is contained in $Y$. If the solution for $d=c+i+r$ is only $r=n-1$, then $X$ is contained in the quadruple
$\{ (c,0),(c+i+n-1,1),(b,2),(b+i+n-1+j,3) \}$ of $\cT_{(i,j)}$.
Similarly, each triple from configurations $(1,1,0,1)$, $(1,0,1,1)$,
and $(0,1,1,1)$, is contained in one of the quadruples of~$\cT_{(i,j)}$.
Thus, each triple of the point set $\Z_n \times \Z_4$ is contained in some quadruple of~$\cT_{(i,j)}$.

The first set of quadruple contains $2 \binom{2n}{3}/4$ quadruples, the second set contains
$4 \binom{n}{2} n$ quadruples, and the third set $n^2$ quadruples.
Hence, the total number of quadruples in $\cT_{(i,j)}$ is $2 \binom{2n}{3}/4 + 4 \binom{n}{2} n + n^2 = \binom{4n}{3} / 4$.
Since this is the number of quadruples in an SQS($4n$) and each triple is contained in at least one quadruple,
it follows that each triple is contained in exactly one
quadruple of $\cT_{(i,j)}$, which completes the proof.
\end{proof}
\begin{lemma}
\label{lem:gr5A1}
The quadruples from configuration $(1,1,1,1)$ (Group 5)
which are contained in the $n \mu$ systems of SQS$(4n)$ from Type A1, form the
following $\mu$ sets $L_j$, $0 \leq j \leq \mu -1$, each one of size $n^3$:
$$
L_j \triangleq \{ \{ (x,0),(y,1),(z,2),(z+y-x+j,3)  \} ~:~ x,y,z \in \Z_n \}.
$$
\end{lemma}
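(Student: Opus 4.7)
The plan is a direct reparameterization of the Group~5 quadruples defined explicitly in the construction of $\cT_{(i,j)}$. First, I would isolate from the definition of $\cT_{(i,j)}$ the third set of quadruples, namely
\[
\{\,\{(a,0),(a+i+n-1,1),(b,2),(b+i+n-1+j,3)\}~:~a,b\in\Z_n\,\},
\]
and observe that these are the \emph{only} configuration-$(1,1,1,1)$ quadruples appearing in $\cT_{(i,j)}$: the first set of blocks (from the embedded copies of $\cS_{(i,j)}$) lives in Groups~2 and~3, while the second set of blocks lies entirely in Group~4.

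Next, for a fixed $j\in\{0,1,\dots,\mu-1\}$, I would take the union of these sets over $i=1,2,\dots,n$. The key observation is that $d:=(i+n-1)\bmod n$ ranges bijectively over $\Z_n$ as $i$ ranges over $\{1,\dots,n\}$. Substituting $x=a$, $y=a+d$, $z=b$, the generic quadruple above becomes
\[
\{(x,0),\,(y,1),\,(z,2),\,(z+y-x+j,3)\},
\]
where all arithmetic is mod $n$. Since $(a,b,i)\mapsto(x,y,z)$ is a bijection from $\Z_n\times\Z_n\times\{1,\dots,n\}$ to $\Z_n^3$ (given $(x,y,z)$ we recover $a=x$, $b=z$, and $i$ as the unique representative in $\{1,\dots,n\}$ with $i+n-1\equiv y-x\pmod n$), the union is precisely $L_j$, and it has cardinality $n^3$.

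The only thing to check is that the bijection really is a bijection and that the arithmetic lines up on the fourth coordinate, i.e.\ that $b+i+n-1+j\equiv z+y-x+j\pmod n$; both are immediate from the substitution. No structural property of the SQS$(n)$, of the one-factorization $F$, or of the perpendicular array is needed here; the lemma is a bookkeeping statement extracting the Group~5 contributions from Type~A1. The only mild subtlety is to be explicit that $n\mu$ systems of Type~A1 partition into $\mu$ blocks of $n$ systems each according to the second coordinate $j$, so that the union of configuration-$(1,1,1,1)$ quadruples over all of Type~A1 is exactly $\bigsqcup_{j=0}^{\mu-1}L_j$, with $L_j$ as defined.
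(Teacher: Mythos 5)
Your proposal is correct and follows essentially the same route as the paper's proof: both isolate the third (Group 5) set of blocks of $\cT_{(i,j)}$, fix $j$, take the union over $i=1,\dots,n$, and reparameterize via $y=a+i+n-1$ (so the fourth coordinate becomes $z+y-x+j$) to identify the union with $L_j$ of size $n^3$. Your extra remarks on the bijectivity of $(a,b,i)\mapsto(x,y,z)$ just make explicit what the paper leaves implicit.
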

\begin{proof}
By the definition of the quadruples from configuration $(1,1,1,1)$ in one system of Type~A1.
For a given $j$, $0 \leq j \leq \mu -1$, in all the related systems of Type A1, we have the set of quadruples
$$
\bigcup_{i=1}^n \{ \{ (a,0),(a+i+n-1,1),(b,2),(b+i+n-1+j,3) \} ~:~ a,b \in \Z_n  \}
$$
$$
= \bigcup_{i=1}^n \{ \{ (a,0),(y=a+i+n-1,1),(b,2),(b+y-a+j,3) \} ~:~ a,b \in \Z_n  \}
$$
$$
= \{ \{ (x,0),(y,1),(z,2),(z+y-x+j,3)  \} ~:~ x,y,z \in \Z_n \}~.
$$
Clearly, each such set is of size $n^3$ which completes the proof.
\end{proof}

\noindent
{\bf Type A2:}

Let $\cT'_{(i,j)}$, be the $(i,j)$-th system of Type A2, $1 \leq i \leq n$, $0 \leq j \leq \mu-1$, which
is defined as follows.

The SQS($2n$) $\cS_{(i,j)}$, constructed on the point set $\Z_n \times \Z_2$,
is embedded on the points set $\Z_n \times \{0,2\}$ and on the point set $\Z_n \times \{1,3\}$.
These sets of quadruples contain quadruples from Group 2 and from Group 3.
They form the first set of quadruples in $\cT'_{(i,j)}$.

Next, we form the following four sets of quadruples from Group 4 in~$\cT'_{(i,j)}$.
$$
\{ \{ (a,0),(b,0),(c,1),(c+i+r+j,3) \} ~:~ 0 \leq r \leq n-2,~ \{ a,b \} \in F_r,~ c \in \Z_n \},
$$
$$
\{ \{ (a,2),(b,2),(c,1),(c+i+r+j,3) \}~:~ 0 \leq r \leq n-2,~ \{ a,b \} \in F_r,~ c \in \Z_n \},
$$
$$
\{ (a,1),(b,1),(c,0),(c+i+r,2) \}~:~ 0 \leq r \leq n-2,~ \{ a,b \} \in F_r,~ c \in \Z_n \},
$$
$$
\{ \{ (a,3),(b,3),(c,0),(c+i+r,2) \}~:~ 0 \leq r \leq n-2,~ \{ a,b \} \in F_r,~ c \in \Z_n \}.
$$
These sets of quadruples form the second set of quadruples in $\cT'_{(i,j)}$.

The last (third) set of quadruples in $\cT'_{(i,j)}$ is from group 5.
$$
\{\{ (a,0),(b,1),(a+i+n-1,2),(b+i+n-1+j,3)\} ~:~ a,b \in \Z_n  \}.
$$
Similarly, to Lemma~\ref{lem:correctA1} and Lemma~\ref{lem:gr5A1} we prove the following two lemmas.
\begin{lemma}
\label{lem:correctA2}
Each set $\cT'_{(i,j)}$, $1 \leq i \leq n$, $0 \leq j \leq \mu-1$, is an SQS$(4n)$.
\end{lemma}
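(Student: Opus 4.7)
The plan is to mirror the proof of Lemma~\ref{lem:correctA1} almost verbatim, with the role of the two embedded halves of $\cS_{(i,j)}$ shifted from $\Z_n\times\{0,1\}$ and $\Z_n\times\{2,3\}$ to $\Z_n\times\{0,2\}$ and $\Z_n\times\{1,3\}$, and the Group-4 and Group-5 quadruples adjusted accordingly. I will first show that every triple of the $4n$-point set $\Z_n\times\Z_4$ is covered by at least one quadruple of $\cT'_{(i,j)}$, and then close with a block count.

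For the covering, I would first dispose of the triples whose support lies entirely inside one of the embedded halves. Since $\cS_{(i,j)}$ is an SQS$(2n)$, its two embedded copies uniquely cover every triple of the configurations $(3,0,0,0),(2,0,1,0),(1,0,2,0),(0,0,3,0)$ and their $\{1,3\}$-analogues $(0,3,0,0),(0,2,0,1),(0,1,0,2),(0,0,0,3)$. Next, for each of the eight ``two-and-one'' triple configurations not internal to an embedded half, namely $(2,1,0,0),(2,0,0,1),(1,2,0,0),(0,2,1,0),(0,0,2,1),(1,0,0,2),(0,1,2,0),(0,0,1,2)$, I would use exactly the pattern of Lemma~\ref{lem:correctA1}: the triple contains a unique pair $\{a,b\}$ lying in a single part, which lies in a unique one-factor $F_r$ of $F$, and reading off the appropriate one of the four Group-4 block sets produces the block containing the triple.

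For the remaining four ``three-parts'' configurations $(1,1,1,0),(1,1,0,1),(1,0,1,1),(0,1,1,1)$ I would split into cases according to a displacement congruence. For a triple $\{(c,0),(b,1),(d,2)\}$ of type $(1,1,1,0)$, set $r\equiv d-c-i \pmod{n}$; if $r\in\{0,\ldots,n-2\}$ then the triple sits in the Group-4 block $\{(b,1),(b',1),(c,0),(c+i+r,2)\}$, where $b'$ is the $F_r$-partner of $b$, while if $r=n-1$ the triple is contained in the unique Group-5 block $\{(c,0),(b,1),(c+i+n-1,2),(b+i+n-1+j,3)\}$. The three symmetric three-parts configurations are handled by the same dichotomy applied to the other Group-4/Group-5 pair.

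To close, I count: the two embedded SQS$(2n)$s contribute $2\binom{2n}{3}/4$ blocks, the four Group-4 families contribute $4\binom{n}{2}n$ blocks, and the Group-5 family contributes $n^2$ blocks, summing to exactly $\binom{4n}{3}/4$, the block count of an SQS$(4n)$. Combined with the covering established above, this forces each triple to be covered exactly once, so $\cT'_{(i,j)}$ is an SQS$(4n)$. The only real obstacle is bookkeeping: one must check that the Group-4 and Group-5 sets never simultaneously cover the same three-parts triple, which is ensured cleanly by the dichotomy $r\in\{0,\ldots,n-2\}$ vs.\ $r=n-1$; after that the remaining verification is a direct transliteration of the Type-A1 argument.
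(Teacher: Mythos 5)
Your proposal is correct and follows essentially the same route as the paper, which proves Lemma~\ref{lem:correctA2} simply by declaring it ``similar'' to Lemma~\ref{lem:correctA1}: you verify coverage of all twenty triple configurations (eight internal to the embedded halves $\Z_n\times\{0,2\}$ and $\Z_n\times\{1,3\}$, eight handled by the Group-4 families via the unique one-factor $F_r$, four three-part configurations handled by the $r\le n-2$ versus $r=n-1$ dichotomy) and then close with the block count $2\binom{2n}{3}/4+4\binom{n}{2}n+n^2=\binom{4n}{3}/4$, exactly as in the Type~A1 argument. Your final remark about checking disjointness of the Group-4 and Group-5 coverings is not actually needed, since coverage of every triple together with the exact block count already forces each triple to lie in exactly one block.
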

\begin{lemma}
\label{lem:gr5A2}
The quadruples from configuration $(1,1,1,1)$ (Group 5)
which are contained in the $n \mu$ systems of SQS$(4n)$ from Type A2, form the
following $\mu$ sets $L'_j$, $0 \leq j \leq \mu -1$, each one of size $n^3$:
$$
L'_j \triangleq \{ \{ (x,0),(y,1),(z,2),(y+z-x+j,3)  \} ~:~ x,y,z \in \Z_n \}.
$$
\end{lemma}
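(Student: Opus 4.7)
The plan is to mimic verbatim the reparametrization used in the proof of Lemma~\ref{lem:gr5A1}, adapted to the Group~5 quadruples arising in Type~A2. Fix $j$ with $0 \leq j \leq \mu-1$. By the definition of $\cT'_{(i,j)}$, the quadruples from configuration $(1,1,1,1)$ contributed by $\cT'_{(i,j)}$ are exactly
$$
\{\{(a,0),(b,1),(a+i+n-1,2),(b+i+n-1+j,3)\} ~:~ a,b \in \Z_n\},
$$
so I would take the union of these families over $i = 1, 2, \ldots, n$ to obtain all Group~5 quadruples occurring in the $n$ Type~A2 systems that share the second index $j$.

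Next I would make the substitution $x = a$, $y = b$, and $z = a + i + n - 1$, where all arithmetic in the $\Z_n$-coordinate is mod $n$. For each fixed $a \in \Z_n$, the map $i \mapsto a + i + n - 1 \pmod n$ is a shift on $\Z_n$, hence as $i$ ranges over $\{1,\dots,n\}$ the value $z$ ranges bijectively over $\Z_n$. Moreover $i + n - 1 \equiv z - x \pmod n$, so the fourth coordinate $b + i + n - 1 + j$ becomes $y + z - x + j$. Substituting, the union above rewrites as
$$
\{\{(x,0),(y,1),(z,2),(y+z-x+j,3)\} ~:~ x,y,z \in \Z_n\} = L'_j.
$$

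Finally I would verify $|L'_j| = n^3$ by noting that the first three entries $(x,0),(y,1),(z,2)$ lie in the three distinct parts $\Z_n \times \{0\}, \Z_n \times \{1\}, \Z_n \times \{2\}$, and hence determine the triple $(x,y,z) \in \Z_n^3$ uniquely; the fourth entry is then forced to be $(y+z-x+j, 3)$. Therefore the parametrization $(x,y,z) \mapsto \{(x,0),(y,1),(z,2),(y+z-x+j,3)\}$ is injective, giving exactly $n^3$ quadruples. I do not anticipate any real obstacle: the only point that deserves a line of verification is the bijectivity of $i \mapsto z$ for fixed $a$, which is immediate, and the whole argument is the direct analogue of the computation in Lemma~\ref{lem:gr5A1}, with the only change being that the Type~A2 construction puts the $+i$ shift between parts $0$ and $2$ (rather than between parts $0$ and $1$ as in Type~A1), which is precisely what turns $z+y-x+j$ into $y+z-x+j$ in the fourth coordinate.
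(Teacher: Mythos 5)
Your proposal is correct and is exactly the argument the paper intends: the paper omits the proof of this lemma, stating only that it is proved "similarly to Lemma~\ref{lem:gr5A1}," and your reparametrization $x=a$, $y=b$, $z=a+i+n-1$ (so that $i+n-1\equiv z-x$ and the fourth coordinate becomes $y+z-x+j$) is the direct analogue of the substitution displayed in that proof. The verification that $i\mapsto z$ is a bijection for fixed $a$ and that the parametrization by $(x,y,z)$ gives exactly $n^3$ quadruples matches the paper's counting as well.
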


\noindent
{\bf Type A3:}

Let $\cT''_{(i,j)}$, be the $(i,j)$-th system of Type A3, $1 \leq i \leq n$, $0 \leq j \leq \mu-1$, which
is defined as follows.

The SQS($2n$) $\cS_{(i,j)}$, constructed on the point set $\Z_n \times \Z_2$,
is embedded on the points set $\Z_n \times \{0,3\}$ and on the point set $\Z_n \times \{1,2\}$.
These set of quadruples contains quadruples from Group 2 and from Group 3.
They form the first set of quadruples in $\cT''_{(i,j)}$.

Next, we form the following four sets of quadruples from Group 4 in~$\cT''_{(i,j)}$.
$$
\{ \{ (a,0),(b,0),(c,1),(c+i+r,2) \} ~:~ 0 \leq r \leq n-2,~ \{ a,b \} \in F_r,~ c \in \Z_n \},
$$
$$
\{ \{ (a,3),(b,3),(c,1),(c+i+r,2) \} ~:~ 0 \leq r \leq n-2,~ \{ a,b \} \in F_r,~ c \in \Z_n \},
$$
$$
\{ \{ (a,1),(b,1),(c,0),(c+i+r+j,3) \} ~:~ 0 \leq r \leq n-2,~ \{ a,b \} \in F_r,~ c \in \Z_n \},
$$
$$
\{ \{ (a,2),(b,2),(c,0),(c+i+r+j,3) \} ~:~ 0 \leq r \leq n-2,~ \{ a,b \} \in F_r,~ c \in \Z_n \}.
$$
These sets of quadruples form the second set of quadruples in $\cT''_{(i,j)}$.

The last (third) set of quadruples in $\cT''_{(i,j)}$ is from group 5.
$$
\{\{ (a,0),(b,1),(b+i+n-1,2),(a+i+n-1+j,3)\} ~:~ a,b \in \Z_n  \}.
$$
Similarly, to Lemma~\ref{lem:correctA1} and Lemma~\ref{lem:gr5A1} we prove the following two lemmas.
\begin{lemma}
\label{lem:correctA3}
Each set $\cT''_{(i,j)}$, $1 \leq i \leq n$, $0 \leq j \leq \mu-1$, is an SQS$(4n)$.
\end{lemma}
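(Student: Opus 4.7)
The plan is to follow the same template as the proofs of Lemma~\ref{lem:correctA1} and Lemma~\ref{lem:correctA2}: show that every triple of the point set $\Z_n\times\Z_4$ lies in at least one quadruple of $\cT''_{(i,j)}$, and then compare block counts to force equality. I would classify triples by their configuration with respect to the four parts $\Z_n\times\{\ell\}$, $\ell\in\Z_4$. Triples of configurations $(3,0,0,0)$, $(0,0,0,3)$, $(2,0,0,1)$, $(1,0,0,2)$ are covered by the copy of $\cS_{(i,j)}$ embedded on $\Z_n\times\{0,3\}$, and triples of configurations $(0,3,0,0)$, $(0,0,3,0)$, $(0,2,1,0)$, $(0,1,2,0)$ by the copy on $\Z_n\times\{1,2\}$. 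All triples supported in at most two parts are handled by these two embedded SQS$(2n)$.

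For triples supported in exactly three parts, I would dispatch them to the four Group~4 sets (configurations $(2,1,1,0)$, $(0,1,1,2)$, $(1,2,0,1)$, $(1,0,2,1)$), each of which contributes three sub-configurations. The easy sub-configurations, namely $(2,1,0,0)$, $(2,0,1,0)$, $(1,2,0,0)$, $(0,2,0,1)$, $(1,0,2,0)$, $(0,0,2,1)$, $(0,1,0,2)$, $(0,0,1,2)$, are each covered uniquely: either the two equal-component points of the triple determine $r$ via the one-factor $F_r$, or the single equal-component point becomes the ``$c$'' and the remaining point pins down both $c$ and $r$ by subtraction. The harder sub-configurations are the four ``fully spread'' ones $(1,1,1,0)$, $(1,1,0,1)$, $(1,0,1,1)$, $(0,1,1,1)$, each matched by exactly one Group~4 set together with the Group~5 set; for instance, $X=\{(c,0),(d,1),(b,2)\}$ forces $c'=d$, $r=b-d-i\pmod n$ in the first Group~4 quadruple family $\{(a,0),(b',0),(c',1),(c'+i+r,2)\}$, and if $r\in\{0,\ldots,n-2\}$ a unique Group~4 quadruple works, while the residual case $r\equiv n-1$ is picked up by the Group~5 quadruple $\{(c,0),(d,1),(d+i+n-1,2),(c+i+n-1+j,3)\}$. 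The other three configurations are handled analogously, using Set~3 and Set~4 (whose 3-component carries the extra $+j$) together with the matching three-point projections of Group~5.

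Once every triple is shown to be contained in at least one quadruple, I would count: the two embedded SQS$(2n)$s contribute $2\binom{2n}{3}/4$ quadruples, the four Group~4 sets contribute $4\binom{n}{2}n$ quadruples, and Group~5 contributes $n^2$ quadruples, for a total of $\binom{4n}{3}/4$, matching the block count of an SQS$(4n)$. Combined with the covering property, this forces each triple to lie in exactly one quadruple, completing the proof. The main obstacle is purely bookkeeping: one must verify, for each of the four $(1,1,1,\cdot)$-type configurations, that the $r=n-1$ case of the matching Group~4 family coincides precisely with the appropriate three-element projection of the Group~5 quadruples, paying careful attention to which sets carry the extra $+j$ shift on the $3$-component and which do not. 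Once this cross-check is organized in a table, the argument reduces to routine verification parallel to Lemma~\ref{lem:correctA1}.
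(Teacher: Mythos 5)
Your proposal is correct and follows essentially the same route as the paper: the paper proves Lemma~\ref{lem:correctA3} ``similarly to Lemma~\ref{lem:correctA1},'' i.e., by checking that every triple of $\Z_n\times\Z_4$ is covered (with the residual $r=n-1$ case of each Group~4 family absorbed by the Group~5 quadruples) and then counting $2\binom{2n}{3}/4+4\binom{n}{2}n+n^2=\binom{4n}{3}/4$ to force uniqueness. Your configuration bookkeeping for Type~A3 (embeddings on $\Z_n\times\{0,3\}$ and $\Z_n\times\{1,2\}$, and which Group~4 families carry the $+j$ shift) is accurate.
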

\begin{lemma}
\label{lem:gr5A3}
The quadruples from configuration $(1,1,1,1)$ (Group 5)
which are contained in the $n \mu$ systems of SQS$(4n)$ from Type A3, form the
following $\mu$ sets $L''_j$, $0 \leq j \leq \mu -1$, each one of size $n^3$:
$$
L''_j \triangleq \{ \{ (x,0),(y,1),(z,2),(x+z-y+j,3)  \} ~:~ x,y,z \in \Z_n \}
$$
\end{lemma}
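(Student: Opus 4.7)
The plan is to imitate the proof of Lemma~\ref{lem:gr5A1} (and its Type~A2 analogue Lemma~\ref{lem:gr5A2}) almost verbatim, since the Type~A3 construction differs only by a permutation of the four coordinates $\{0,1,2,3\}$, and the Group~5 quadruples in $\cT''_{(i,j)}$ are defined by the same kind of affine formula that made the previous two lemmas work.

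First I would fix $j$ with $0 \le j \le \mu-1$ and take the union of the Group~5 quadruples over the $n$ systems $\cT''_{(i,j)}$, $1 \le i \le n$, giving
$$
\bigcup_{i=1}^n \{ \{(a,0),(b,1),(b+i+n-1,2),(a+i+n-1+j,3)\} ~:~ a,b \in \Z_n \}.
$$
Then I would make the substitution $x \triangleq a$, $y \triangleq b$, $z \triangleq b+i+n-1$, noting that for fixed $y$ the map $i \mapsto z = y+i+n-1$ is a bijection between $\{1,2,\ldots,n\}$ and $\Z_n$ (since $i+n-1$ runs through a complete set of residues modulo $n$). Under this substitution, the fourth coordinate becomes $a+i+n-1+j = x + (z-y) + j = x+z-y+j$, so the above union is exactly
$$
\{\{(x,0),(y,1),(z,2),(x+z-y+j,3)\} ~:~ x,y,z \in \Z_n\} = L''_j,
$$
as claimed.

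For the cardinality, I would observe that the map $(x,y,z) \mapsto \{(x,0),(y,1),(z,2),(x+z-y+j,3)\}$ is injective because the first three coordinates of the quadruple lie in three distinct groups $\Z_n \times \{0\}$, $\Z_n \times \{1\}$, $\Z_n \times \{2\}$, so $x$, $y$, and $z$ can be read off uniquely. Hence $|L''_j| = n^3$.

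There is no serious obstacle; the only point to be a bit careful about is that the union over $i$ does reparametrize cleanly to range over \emph{all} of $\Z_n$ for $z-y$, and that the $\mu$ sets $L''_j$ are indexed by the same parameter $j$ that shifted the fourth coordinate in the construction of Type~A3. These are both immediate from the formulas in Section~\ref{sec:formal_steps}, so the proof is a short verification entirely parallel to that of Lemma~\ref{lem:gr5A1}.
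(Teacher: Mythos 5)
Your proof is correct and follows essentially the same route as the paper, which proves Lemma~\ref{lem:gr5A1} by exactly this reparametrization (setting the second-group coordinate as a new variable and absorbing $i+n-1$ into the difference $z-y$) and then states that Lemma~\ref{lem:gr5A3} is proved similarly. Your substitution $z = b+i+n-1$, the bijectivity of $i \mapsto z$ for fixed $b$, and the injectivity argument for the count $n^3$ are all exactly what the paper's template requires.
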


The next step for Type A is to calculate the number of times that each quadruple from each configuration
is contained in the $3 \mu n$ SQS($4n$) of Type A. Recall, for the next theorem, that in the SQS($2n$) $\cS_{(i,j)}$,
$1 \leq i \leq n$, $0 \leq j \leq \mu-1$, is constructed in the DLS Construction, by
using a PA$_\gamma (2,n,n)$, where $\gamma = \frac{2 \mu}{n-1}$, i.e. $\mu = \frac{(n-1)\gamma}{2}$.

\begin{lemma}
\label{lem:propA}
The systems of Type A in the $(4n)$-construction have the following containment properties.
\begin{enumerate}
\item[(1)] Each quadruple from each configuration of Group 2
is contained in exactly $\mu = \frac{(n-1)\gamma}{2}$ systems of Type A.

\item[(2)] Each quadruple from Group 3 is contained in exactly $\gamma = \frac{2 \mu}{n-1}$ systems
of Type A.

\item[(3)] Each quadruple from Group 4 is contained in exactly $\mu$ systems.

\item[(4)] The total number of quadruples from configuration $(1,1,1,1)$,
which are contained in the systems of Type A is $3 \mu n^3$. These quadruples are the $3 \mu n^3$ quadruples
defined in the $3 \mu$ sets $L_j$, $L'_j$, and $L''_j$, $0 \leq j \leq \mu-1$.
\end{enumerate}
\end{lemma}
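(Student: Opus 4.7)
The plan is to handle each of the four parts of Lemma~\ref{lem:propA} separately, exploiting the clean partition of the nontrivial configurations among the three sub-types A1, A2, A3. Reading off the formal block definitions, Type A1 puts its two embedded copies of $\cS_{(i,j)}$ on $\Z_n\times\{0,1\}$ and $\Z_n\times\{2,3\}$, Type A2 on $\Z_n\times\{0,2\}$ and $\Z_n\times\{1,3\}$, and Type A3 on $\Z_n\times\{0,3\}$ and $\Z_n\times\{1,2\}$. Consequently, each Group 2, Group 3, and Group 4 configuration arises from quadruples appearing in exactly one sub-type, and the count for a given configuration reduces to a count inside that sub-type.

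For part (1), fix a Group 2 quadruple, say of configuration $(3,1,0,0)$. It lives in the first embedded copy of $\cS_{(i,j)}$ inside $\cT_{(i,j)}$. The collection $\cR=\{\cS_{(i,j)}:1\le i\le n,\ 0\le j\le\mu-1\}$ is precisely the output of the DLS Construction applied to the perpendicular array PA$_\gamma(2,n,n)$, so by Lemma~\ref{lem:sumDLS} this quadruple is contained in exactly $\tfrac{\gamma(n-1)}{2}=\mu$ systems of $\cR$, and hence in $\mu$ systems of Type A. The remaining eleven Group 2 configurations are identical once the sub-type and the relevant embedding pair are identified. Part (2) is the same argument with a Group 3 quadruple playing the role of a configuration-$(2,2)$ quadruple in $\cR$, giving the count $\gamma$ from Lemma~\ref{lem:sumDLS}.

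Part (3) requires a direct count, because the Group 4 quadruples come from the ``second set'' formulae rather than from the embedded SQS$(2n)$'s. The analysis splits into two flavours according to whether $j$ appears in the defining formula. For instance, a $(2,0,1,1)$-quadruple $\{(a,0),(b,0),(c,2),(d,3)\}$ appears in $\cT_{(i,j)}$ iff $\{a,b\}\in F_r$ for the unique $r$ given by the one-factorization and $d\equiv c+i+r+j \pmod n$; for each of the $\mu$ values of $j$ there is a unique admissible $i\in\{1,\dots,n\}$, giving $\mu$ pairs $(i,j)$. In contrast, a $(1,1,2,0)$-quadruple comes from $\{(a,2),(b,2),(c,0),(c+i+r,1)\}$, which determines $i$ uniquely from the one-factorization constraint and the congruence on the first coordinate, leaving $j$ free; this again produces $\mu$ distinct systems. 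Running through the twelve Group 4 configurations against the formulae of Types A1, A2, A3 yields the same count $\mu$ in every case.

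Part (4) is an immediate accounting consequence of Lemmas~\ref{lem:gr5A1}, \ref{lem:gr5A2}, and \ref{lem:gr5A3}: each sub-type contributes the $\mu$ sets $L_j$, $L'_j$, or $L''_j$ of $n^3$ configuration-$(1,1,1,1)$ quadruples, summing to $3\mu n^3$. The main obstacle will be the bookkeeping in part (3): one must identify, for each of the twelve Group~4 configurations, which of the four ``second set'' formulae inside the correct sub-type produces it, and then distinguish the two counting regimes (\emph{$j$ absent from the formula} versus \emph{$j$ present in the formula}). Once this correspondence is laid out in a short table, every case collapses to the same answer $\mu$, and the lemma follows.
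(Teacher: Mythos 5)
Your proposal is correct and follows essentially the same route as the paper: parts (1), (2), and (4) are read off from Lemma~\ref{lem:sumDLS} and Lemmas~\ref{lem:gr5A1}--\ref{lem:gr5A3} exactly as in the paper, and part (3) is the same direct verification of the Group~4 count. The only cosmetic difference is in (3), where you solve the congruence $i+j\equiv d-c-r$ (or $i\equiv d-c-r$ with $j$ free) for the pairs $(i,j)$ containing a fixed quadruple, whereas the paper counts $\frac{(n-1)n^2}{2}$ quadruples per configuration per system and invokes distinctness across $1\le i\le n$ for fixed $j$ --- these are equivalent statements.
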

\begin{proof}
$~$
\begin{enumerate}
\item[(1)] is an immediate consequence from Lemma~\ref{lem:sumDLS} and the definitions
of quadruples of Type A1, Type A2, and Type A3.

\item[(2)] is also an immediate consequence from Lemma~\ref{lem:sumDLS} and the definitions
of quadruples of Type A1, Type A2, and Type A3.

\item[(3)] By the definition of Group 4, the total number of quadruples in a given configuration of Group 4
is $\frac{n(n-1)}{2} n^2$. Each SQS$(4n)$ of Type A contains $4 \frac{n(n-1)}{2} n$ such
quadruples from four distinct configurations, $\frac{n(n-1)}{2} n$ quadruples from each configuration.
Clearly, there is no intersection between the configurations used in Type A1 to those
used in Type A2 (and similarly between Type A1 and Type A3, and between Type A2 and Type A3).
In each such type there are four configurations from the twelve configurations of Group 4. For example,
in Type A1 there are quadruples from configurations $(2,0,1,1)$, $(0,2,1,1)$, $(1,1,2,0)$, and $(1,1,0,2)$.
Consider for example, the quadruples from type A1 for fixed $i$ and $j$,
\begin{equation}
\label{eq:quad1}
\{ (a,0),(b,0),(c,2),(c+i+r+j,3) \},
\end{equation}
for each $r$, $0 \leq r \leq n-2$, $\{ a,b \} \in F_r$, and $c \in \Z_n$.
There are $(n-1) n$ distinct ways to choose a pair $(r,c)$ and $\frac{n}{2}$ pairs from $F_r$.
Hence, Type A1 contains exactly $\frac{(n-1)n^2}{2}$ quadruples from configuration $(2,0,1,1)$.
The same calculation holds for each configuration in Type A1, Type A2, and Type A3.
Clearly for $1 \leq i_1 < i_2 \leq n$, the quadruples in (\ref{eq:quad1}) for $i=i_1$ and for $i=i_2$
are distinct which implies that each quadruple from configuration $(2,0,1,1)$ is contained exactly once in Type A1 for a fixed $j$.
Thus, in the $3 n \mu$ systems of Type A,
each quadruple from Group 4 is contained in exactly $\mu$ systems.

\item[(4)] is an immediate consequence from Lemma~\ref{lem:gr5A1}, Lemma~\ref{lem:gr5A2}, and Lemma~\ref{lem:gr5A3}.
\end{enumerate}
\end{proof}

\vspace{0.5cm}

\noindent
{\bf Type B:}

For the systems of Type B, to be defined in the $(4n)$-construction,
the following structures are required as input:

\begin{itemize}
\item Let $F= \{ F_0,F_1,\ldots,F_{n-2}\}$ be a one-factorization of $K_n$ on the point set $\Z_n$.

\item Let $M$ be an $(n-1) \times (n-1)$ Latin square on the points set $\{ 0,1,\ldots,n-2 \}$.

\item A set $\{ \cS^*_i ~:~ 1 \leq i \leq \mu (n-3)\}$, on the point set $\Z_n$, which form an LS$(3,4,n;\mu)$.
\end{itemize}

Let $\cP_i$, $1 \leq i \leq \mu (n-3)$, be the $i$-th system of Type B. Its blocks are defined
on the point set $\Z_n \times \Z_4$ as follows.

$$
\{ \{ (x_1,j),(x_2,j),(x_3,j),(x_4,j) \} ~:~ \{ x_1,x_2,x_3,x_4 \} \in \cS^*_i, ~ j \in \Z_4 \}.
$$

Given $j$, $0 \leq j \leq n-2$,
for $\cP_i$, $j (\mu - \gamma) < i \leq (j+1) (\mu - \gamma)$, the following six blocks from Group 3
are defined.
$$
\{ \{ (x,0),(y,0),(z,1),(v,1)  \} ~:~ \{x,y\} \in F_r,~ \{z,v\} \in F_{M(j,r)}, ~ 0 \leq r \leq n-2 \},
$$
$$
\{ \{ (x,0),(y,0),(z,2),(v,2)  \} ~:~ \{x,y\} \in F_r,~ \{z,v\} \in F_{M(j,r)}, ~ 0 \leq r \leq n-2 \},
$$
$$
\{ \{ (x,0),(y,0),(z,3),(v,3)  \} ~:~ \{x,y\} \in F_r,~ \{z,v\} \in F_{M(j,r)},~ 0 \leq r \leq n-2 \},
$$
$$
\{ \{ (x,1),(y,1),(z,2),(v,2)  \} ~:~ \{x,y\} \in F_r,~ \{z,v\} \in F_{M(j,r)}, ~ 0 \leq r \leq n-2 \},
$$
$$
\{ \{ (x,1),(y,1),(z,3),(v,3)  \} ~:~ \{x,y\} \in F_r,~ \{z,v\} \in F_{M(j,r)}, ~ 0 \leq r \leq n-2 \},
$$
$$
\{ \{ (x,2),(y,2),(z,3),(v,3)  \} ~:~ \{x,y\} \in F_r,~ \{z,v\} \in F_{M(j,r)}, ~ 0 \leq r \leq n-2 \}.
$$

\begin{remark}
Note, that $(n-1) (\mu - \gamma) = (n-3) \mu$ and hence there is no ambiguity in the definition
of the $\cP_i$'s.
\end{remark}

The most challenging design part, of Type B, is to construct the quadruples
from configuration $(1,1,1,1)$ to accommodate the large set. There are $n^4$ quadruples from configuration $(1,1,1,1)$,
each one should be contained in exactly $\mu$ of the $\mu (4n-3)$ systems of the
LS$(3,4,4n;\mu)$ which is constructed. The sets of quadruples
from configuration $(1,1,1,1)$ for our system of Type B are defined as follows.

Let $t$ be the smallest positive integer such that $t n \geq \mu$. For each $j$, $0 \leq j \leq tn -1$,
we form the set
$$
D_j \triangleq \{ \{ (x,0),(y,1),(z,2),(x+z-y+j,3)  \} ~:~ x,y,z \in \Z_n \}
$$

For each $j$, $0 \leq j \leq (\mu -t)n-1$, we form the set
$$
E_j \triangleq \{ \{ (x,0),(y,1),(z,2),(z+y-x+j,3)  \} ~:~ x,y,z \in \Z_n \}
$$
Note, that two $D_j$'s are either the same or disjoint and the same is for the $E_j$'s.
Moreover, the $D_j$'s and the $E_j$'s might have nonempty intersection. Nevertheless, in the sequel
if $i \neq j$ then $D_i$ and $D_j$ will be considered as distinct sets. The same is true for $E_i$ and $E_j$.

\begin{lemma}
\label{lem:DEL1}
Each $D_j$ and each $E_j$ contains exactly $n^3$ quadruples from configuration $(1,1,1,1)$.
Each quadruple from configuration $(1,1,1,1)$ is contained in exactly $\mu$
of these $D_j$'s and $E_j$'s of the
set $\{ D_j ~:~ 0 \leq j \leq tn -1 \} \cup \{ E_j ~:~ 0 \leq j \leq (\mu -t)n -1 \}$.
\end{lemma}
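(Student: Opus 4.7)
The plan is to verify both assertions by direct index-counting modulo $n$. For the size claim, observe that in the defining formula for $D_j$, the quadruple $\{(x,0),(y,1),(z,2),(x+z-y+j,3)\}$ determines $x$, $y$, and $z$ uniquely as the first coordinates of the points lying in the parts $\Z_n\times\{0\}$, $\Z_n\times\{1\}$, $\Z_n\times\{2\}$, respectively. Hence the map $(x,y,z)\mapsto$ quadruple is an injection from $\Z_n^3$ onto $D_j$, which gives $|D_j|=n^3$; each such quadruple is visibly of configuration $(1,1,1,1)$. The same argument yields $|E_j|=n^3$.

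For the multiplicity claim, I fix an arbitrary quadruple $Q=\{(a,0),(b,1),(c,2),(d,3)\}$ of configuration $(1,1,1,1)$ and decide exactly which $D_j$ and $E_j$ contain it. The point $Q$ lies in $D_j$ if and only if $d\equiv a+c-b+j \pmod{n}$, equivalently $j\equiv b+d-a-c\pmod{n}$, which pins down a unique residue class modulo $n$. Since the indices $j$ of the $D_j$'s run through $\{0,1,\ldots,tn-1\}$, this set hits each residue modulo $n$ exactly $t$ times, so $Q$ lies in exactly $t$ of the $D_j$'s. Analogously, $Q\in E_j$ if and only if $j\equiv a+d-b-c\pmod{n}$, and since the indices of the $E_j$'s run over $\{0,1,\ldots,(\mu-t)n-1\}$, $Q$ lies in exactly $\mu-t$ of the $E_j$'s. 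Summing the two contributions gives $t+(\mu-t)=\mu$ as required.

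I do not foresee any real obstacle: the whole proof reduces to solving one linear congruence modulo $n$ for each family, and the two linear forms $b+d-a-c$ and $a+d-b-c$ are clearly different functions of $(a,b,c,d)$, so there is no interference between the two counts. The only bookkeeping point is that $t$ was chosen as the smallest positive integer with $tn\geq\mu$, which ensures both $t\geq 0$ and $\mu-t\geq 0$, so the two index ranges are meaningful and together contribute $\mu n$ sets, matching the total incidence count $\mu\cdot n^4 = \mu n\cdot n^3$ required.
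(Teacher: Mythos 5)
Your proof is correct and follows essentially the same route as the paper's: count the $n^3$ choices of $(x,y,z)$ for the size claim, then use the periodicity of the index $j$ modulo $n$ (each quadruple lies in a unique residue class of $j$ for the $D$-family and for the $E$-family) to get $t$ containments among the $D_j$'s and $\mu-t$ among the $E_j$'s, summing to $\mu$. The remark about the two linear forms being different is harmless but unnecessary, since the $D_j$'s and $E_j$'s are counted as distinct members of the multiset in any case.
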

\begin{proof}
There are $n^3$ distinct ways to choose $x,y,z \in \Z_n$ and hence each $D_j$ and
each $E_j$ contains exactly $n^3$ quadruples from configuration $(1,1,1,1)$.
Moreover, each such quadruple is an element in one of the $D_j$'s and one of the $E_j$'s.
The set $\{ D_j ~:~ 0 \leq j \leq tn -1 \}$ contains $tn$ subsets of quadruples, where
$D_j=D_{n+j}$ for $0 \leq j \leq (t-1)n-1$. Therefore, each quadruple from configuration (1,1,1,1)
is contained in exactly $t$ of the $D_j$'s.
The set $\{ E_j ~:~ 0 \leq j \leq (\mu -t)n -1 \}$ contains $(\mu -t)n$ subsets of quadruples, where
$E_j=E_{n+j}$ for $0 \leq j \leq (\mu -t-1)n -1$. Therefore, each quadruple from configuration (1,1,1,1)
is contained in exactly $\mu-t$ of the $E_j$'s. This completes the proof.
\end{proof}

\begin{lemma}
\label{lem:DEL2}
$~$
$$
\{ L_j,~ L'_j,~ L''_j ~:~ 0 \leq j \leq \mu-1 \}  \subset
\{ D_j ~:~ 0 \leq j \leq tn -1 \} \cup \{ E_j ~:~ 0 \leq j \leq (\mu -t)n -1 \}.
$$
\end{lemma}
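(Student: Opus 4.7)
The plan is to prove the inclusion by direct comparison of the defining formulas, since the sets $D_j$ and $E_j$ used to build Type B have been chosen precisely to match the shapes of the sets $L_j, L'_j, L''_j$ coming from Type A. The only nontrivial content is a small index-range check.

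First, I would write down the defining formula for $L''_j$ from Lemma~\ref{lem:gr5A3},
$$L''_j = \{\{(x,0),(y,1),(z,2),(x+z-y+j,3)\} : x,y,z \in \Z_n\},$$
and observe that this is literally the same formula used to define $D_j$ in the Type~B construction. Hence $L''_j = D_j$ as sets for every $j$. Since $t$ is chosen as the smallest positive integer with $tn \ge \mu$, the inequality $\mu - 1 \le tn - 1$ is immediate, so every $L''_j$ with $0 \le j \le \mu - 1$ lies in $\{D_{j'} : 0 \le j' \le tn - 1\}$.

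Next, I would compare $L_j$ and $L'_j$ with $E_j$. The formula for $L_j$ from Lemma~\ref{lem:gr5A1} coincides character-for-character with the defining formula for $E_j$, both having last coordinate $z+y-x+j$. The formula for $L'_j$ from Lemma~\ref{lem:gr5A2} has last coordinate $y+z-x+j$, which equals $z+y-x+j$ by commutativity of addition in $\Z_n$. Therefore $L_j = L'_j = E_j$ as sets of quadruples.

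The remaining step is the range verification: for $0 \le j \le \mu - 1$, I need $j \le (\mu - t)n - 1$, equivalently $\mu \le (\mu - t)n$. Using $t = \lceil \mu/n \rceil \le \mu/n + 1$, this reduces to $n \le \mu(n - 2)$, which holds comfortably in the operating regime $n \ge 10$, $\mu \ge 2$ of Theorem~\ref{thm:sum4n}. (Alternatively, since $E_{j'}$ depends only on $j' \bmod n$, one can always replace $j$ by $j \bmod n \in \{0,\dots,n-1\}$ and need only $\mu - t \ge 1$, which also follows from the same assumptions.)

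The proof is really bookkeeping: the formulas defining $D_j$ and $E_j$ in Section~\ref{sec:formal_steps} were set up so that the Type~A residues $L_j, L'_j, L''_j$ become special cases. The only potential obstacle is the index-range calculation, and this is a short arithmetic check.
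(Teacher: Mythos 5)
Your proof is correct and follows the same route as the paper, which simply observes $L_j = L'_j = E_j$ and $L''_j = D_j$ by inspection of the defining formulas. The index-range verification you add (that $\mu-1 \le tn-1$ by the choice of $t$, and that $\mu \le (\mu-t)n$ in the operating regime) is a detail the paper leaves implicit, and your check of it is sound.
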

\begin{proof}
By the definition $L_j = L'_j = E_j$ for each $0 \leq j \leq \mu-1$ and
$L''_j=D_j$ for each $0 \leq j \leq \mu-1$.
\end{proof}

\begin{corollary}
\label{cor:cof1111}
The number of subsets with $n^3$ quadruples from configuration $(1,1,1,1)$ in the multiset
$\{ D_j ~:~ 0 \leq j \leq tn -1 \} \cup \{ E_j ~:~ 0 \leq j \leq (\mu -t)n -1 \} \setminus \{ L_j,~ L'_j,~ L''_j ~:~ 0 \leq j \leq \mu-1 \}$
is $(n  -3) \mu$.
\end{corollary}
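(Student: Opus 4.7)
The plan is to prove this by a direct count, leveraging Lemma~\ref{lem:DEL2} which has just been established. First I would compute the total size of the multiset $\{ D_j ~:~ 0 \leq j \leq tn-1 \} \cup \{ E_j ~:~ 0 \leq j \leq (\mu-t)n-1 \}$: it contains $tn$ subsets of type $D_j$ and $(\mu-t)n$ subsets of type $E_j$, giving a total of $tn + (\mu-t)n = \mu n$ subsets, each of size $n^3$.

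Next, I would count how many subsets are being removed. By Lemma~\ref{lem:DEL2}, the multiset $\{ L_j,\, L'_j,\, L''_j ~:~ 0 \leq j \leq \mu-1 \}$ contains exactly $3\mu$ subsets, and each one occurs in the original multiset (with $L_j$ and $L'_j$ coinciding with some $E_j$, and $L''_j$ coinciding with some $D_j$). Since the removal is performed in the multiset sense, after the $3\mu$ copies are taken out the remaining number of subsets is $\mu n - 3\mu = (n-3)\mu$.

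There is no real obstacle here; the only subtle point is interpreting the notation correctly, namely that the union in the statement is a multiset union (so $D_j$ and $D_{n+j}$ are counted as two separate subsets even when they coincide as sets, and likewise for the $E_j$'s), as was explicitly remarked just before Lemma~\ref{lem:DEL1}. With that convention, the proof reduces to the arithmetic $tn + (\mu-t)n - 3\mu = (n-3)\mu$, and the claim follows immediately.
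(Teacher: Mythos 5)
Your proof is correct and matches the paper's intent exactly: the corollary is stated without proof precisely because it reduces to the arithmetic $tn + (\mu-t)n - 3\mu = (n-3)\mu$ once Lemma~\ref{lem:DEL2} identifies each $L_j$, $L'_j$, $L''_j$ with a member of the multiset. You also correctly flag the one subtlety, namely that the union and the set difference are taken in the multiset sense as the paper stipulates just before Lemma~\ref{lem:DEL1}.
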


By Corollary~\ref{cor:cof1111} we have a partition of the quadruples (with repetitions) from
configuration $(1,1,1,1)$ which are not contained in Type A into $(n -3)\mu$ subsets, each one
of size $n^3$. These $(n -3)\mu$ subsets are distributed arbitrarily among the $\cP_i$'s
to complete the quadruples from Group 5 of the $\cP_i$'s.

\begin{lemma}
\label{lem:sysB}
Each system of Type B is an SQS$(4n)$.
\end{lemma}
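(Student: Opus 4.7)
The plan is to show that each $\cP_i$ has the correct number of blocks $\binom{4n}{3}/4$ and that every $3$-subset of $\Z_n\times\Z_4$ is contained in at least one block of $\cP_i$; a standard double count then forces every triple to lie in exactly one block, which is the defining property of an SQS$(4n)$. The block count is straightforward: the four replicas of $\cS^*_i$ give $4\cdot\binom{n}{3}/4=\binom{n}{3}$ blocks; each of the six Group~3 families contributes $(n-1)(n/2)^2$ blocks (for every $r$ there are $n/2$ pairs in $F_r$ and $n/2$ pairs in $F_{M(j,r)}$), totalling $3n^2(n-1)/2$; the single Group~5 set assigned to $\cP_i$ contributes $n^3$. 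Arithmetic yields $\binom{n}{3}+\frac{3}{2}n^2(n-1)+n^3=\binom{4n}{3}/4$.

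For triple coverage I would split by configuration. Triples of type $(3,0,0,0)$ sit inside a single level and are covered by the replicated blocks of $\cS^*_i$, because $\cS^*_i$ is itself an SQS$(n)$. For a triple of type $(2,1,0,0)$, say $\{(x,0),(y,0),(z,1)\}$, the pair $\{x,y\}$ lies in a unique one-factor $F_r$, and then $z$ is paired with a unique $v$ in $F_{M(j,r)}$, producing the Group~3 block $\{(x,0),(y,0),(z,1),(v,1)\}$. The reverse sub-configuration $\{(x,0),(z,1),(v,1)\}$, with the pair on level~$1$, is covered by the same family: since $M$ is a Latin square, $M(j,\cdot)$ is a permutation of $\{0,\ldots,n-2\}$, so the index $s$ with $\{z,v\}\in F_s$ determines a unique $r$ with $M(j,r)=s$, and then $x$ has a unique partner $y$ in $F_r$. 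The remaining ten sub-configurations of $(2,1,0,0)$-type are handled by the five other Group~3 families corresponding to the other pairs of levels.

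For triples of type $(1,1,1,0)$, the $n^3$ Group~5 blocks assigned to $\cP_i$ form exactly one of the sets $D_j$ or $E_j$; note that $L_j$, $L'_j$, and $L''_j$ coincide with specific $D$'s or $E$'s by Lemma~\ref{lem:DEL2}. Taking $D_j$ for illustration, a triple $\{(a,0),(b,1),(c,2)\}$ lies in the unique block with fourth point $(a+c-b+j,3)$, and a triple that instead omits the level-$2$, level-$1$, or level-$0$ point is covered by uniquely solving the defining relation $v=x+z-y+j$ for the missing coordinate; the case $E_j$ is identical with the affine relation $v=z+y-x+j$. This completes the coverage check, so combined with the block count, $\cP_i$ is an SQS$(4n)$. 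The only delicate point in the argument is the bidirectional coverage of $(2,1,0,0)$-type triples by the Group~3 families, which hinges precisely on the Latin-square property of $M$; every other case reduces to the one-factor structure or to unique solvability of a linear equation in $\Z_n$.
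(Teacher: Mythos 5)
Your proof is correct and takes essentially the same route as the paper's: both count the blocks of a Type B system to be exactly $\binom{4n}{3}/4$ and combine this with at-least-once coverage of every triple to force exactly-once coverage. The paper dismisses the coverage check with ``one can easily verify,'' whereas you supply the configuration-by-configuration details (including the use of the Latin-square property of $M$ for the $(2,1,0,0)$-type triples), which is a welcome elaboration rather than a different argument.
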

\begin{proof}
One can easily verify that each triple from $\Z_n \times \Z_4$ is contained in one
of the blocks for each system of type B. Hence, to complete the proof it is sufficient to
show that the number of blocks in a system of Type B is $\binom{4n}{3} {\huge /} 4$ as required for
SQS$(4n)$. By their definition, in Type B, there are $\binom{n}{3}$ quadruples of Group 1 in each system.
The number of quadruple in each system from Group 3 is $6 (n-1)\frac{n^2}{4}$ and from Group 5 this number is $n^3$.
Since $\binom{n}{3} + 6 (n-1)\frac{n^2}{4} +n^3 = \binom{4n}{3} {\huge /} 4$, it follows that each system of Type B
is an SQS$(4n)$.
\end{proof}

\begin{lemma}
\label{lem:propB}
The systems of Type B in the $(4n)$-construction have the following containment properties.
\begin{enumerate}

\item[(1)] Each quadruple from each configuration of Group 1
is contained in exactly $\mu = \frac{(n-1)\gamma}{2}$ of the $(n-3)\mu$ systems of Type B.

\item[(2)] Each quadruple from Group 3 is contained in exactly $\mu - \gamma$ systems
of Type B.

\item[(3)] The total number of quadruples from configuration $(1,1,1,1)$ (Group 5),
which are contained in the systems of Type B is $(n -3)\mu n^3$.
\end{enumerate}
\end{lemma}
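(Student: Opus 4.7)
The plan is to handle the three parts in the order given, since each corresponds to a distinct block family in the definition of the $\cP_i$'s and the arguments are essentially independent.

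For Part~(1), I would observe that the Group~1 quadruples in $\cP_i$ arise only from the first family of blocks, namely $\{(x_1,j),(x_2,j),(x_3,j),(x_4,j)\}$ with $\{x_1,x_2,x_3,x_4\}\in\cS^*_i$ and $j\in\Z_4$. A quadruple in configuration $(4,0,0,0)$ (and analogously for the three other single-level configurations) corresponds to a fixed $4$-subset $\{x_1,x_2,x_3,x_4\}\subset\Z_n$ placed in a fixed level $j$. Since the family $\{\cS^*_i : 1\leq i\leq \mu(n-3)\}$ is an LS$(3,4,n;\mu)$, that $4$-subset appears in exactly $\mu$ of the systems $\cS^*_i$, and for each such~$i$ the corresponding quadruple appears (in the appropriate level $j$) exactly once in $\cP_i$. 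This gives the count~$\mu$.

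For Part~(2), I would fix a quadruple, say $Z=\{(x,0),(y,0),(z,1),(v,1)\}$ from configuration $(2,2,0,0)$. Since $F$ is a one-factorization of $K_n$, there is a unique $r$ with $\{x,y\}\in F_r$ and a unique $s$ with $\{z,v\}\in F_s$. By inspection of the construction, $Z$ lies in $\cP_i$ precisely when the index $j$ assigned to $i$ (via $j(\mu-\gamma) < i \leq (j+1)(\mu-\gamma)$) satisfies $M(j,r)=s$. Because $M$ is an $(n-1)\times(n-1)$ Latin square, the column indexed by $r$ takes the value $s$ in exactly one row $j^*$, and this $j^*$ accounts for exactly $\mu-\gamma$ consecutive indices $i$. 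The same argument, with the roles of the levels permuted, handles the other five Group~3 configurations, giving $\mu-\gamma$ in every case. The only thing worth double-checking here is the bookkeeping that $(n-1)(\mu-\gamma)=(n-3)\mu$, which was already remarked in the construction and shows the assignment $i\mapsto j$ is well defined and exhausts $\{1,\dots,(n-3)\mu\}$.

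For Part~(3), I would simply cite Corollary~\ref{cor:cof1111}: after removing the subsets $L_j,L'_j,L''_j$ already used by Type~A from the multiset $\{D_j\}\cup\{E_j\}$, one is left with exactly $(n-3)\mu$ subsets, each of size $n^3$. These are distributed arbitrarily among the $(n-3)\mu$ systems $\cP_i$ to serve as their Group~5 blocks, so the total number of configuration $(1,1,1,1)$ quadruples in Type~B is $(n-3)\mu\cdot n^3$, as claimed.

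The only real obstacle is Part~(2), where one must be confident that the Latin-square selection rule $M(j,r)=s$ hits a single value of $j$ for each Group~3 configuration independently; the remaining two parts are essentially accounting. No lengthy calculation is required once the Latin-square argument is in place, so the proof should be short.
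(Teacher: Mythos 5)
Your proposal is correct and follows essentially the same route as the paper's proof: part (1) from the defining property of the LS$(3,4,n;\mu)$, part (2) from the grouping of the $\cP_i$'s into $n-1$ blocks of $\mu-\gamma$ consecutive systems sharing the same Group~3 quadruples, and part (3) directly from Corollary~\ref{cor:cof1111}. In fact, for part (2) you spell out the Latin-square step (that $M(j,r)=s$ has a unique solution $j^*$, so each Group~3 quadruple lies in exactly the one block of $\mu-\gamma$ systems indexed by $j^*$) which the paper leaves implicit, so your write-up is, if anything, slightly more complete.
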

\begin{proof}
The enumeration is a straightforward result from the definitions.
\begin{enumerate}

\item[(1)] follows from the immediate observation that each quadruple from $\Z_n$
is contained in exactly $\mu$ systems of the LS$(3,4,n;\mu)$,
$\{ \cS^*_i ~:~ 1 \leq i \leq (n-3)\mu \}$.

\item[(2)] follows immediately from the definition that for each $j$, $0 \leq j \leq n-2$,
for each $i$, $j (\mu - \gamma) < i \leq (j+1) (\mu - \gamma)$, the related $\cP_i$'s contain
the same quadruples from Group~3.

\item[(3)] The number of $\cP_i$'s is $(n -3)\mu$. Each one contains either one of the $D_j$'s
or one of the $E_j$'s, where each one contains $n^3$ quadruples. Thus,
The total number of quadruples from configuration $(1,1,1,1)$,
which are contained in the systems of Type B is ${(n -3)\mu n^3}$.
\end{enumerate}
\end{proof}

\noindent
{\bf Proof of Theorem~\ref{thm:sum4n}:}

There are no quadruples from configurations of Group 1 in Type A, while
by Lemma~\ref{lem:propB} each quadruple from configurations of Group 1
is contained in exactly $\mu$ systems or Type B.

By Lemma~\ref{lem:propA} each quadruple from each configuration of Group 2
or Group 4 is contained in exactly $\mu$ systems of Type A, while in Type B
there are no quadruples from these groups.

By Lemma~\ref{lem:propA} each quadruple from each configuration of Group 3 is
contained in $\gamma$ systems of Type A and by Lemma~\ref{lem:propB}
each quadruple from each configuration of Group 3 is
contained in $\mu - \gamma$ systems of Type B. Thus, each quadruple from each configuration of Group 3
is contained in exactly $\mu$ systems of Type A or Type B.

By Lemma~\ref{lem:propA}, Lemma~\ref{lem:DEL1}, Lemma~\ref{lem:DEL2},
and Corollary~\ref{cor:cof1111}, each quadruple from configuration (1,1,1,1)
is contained in exactly $\mu$ systems of Type A or Type B.

This complete the proof of the theorem.

\begin{flushright}
$\Box$
\end{flushright}

\section{LS$(3,4,2^m n;g)$, for $n \equiv 2$ or $4~(\text{mod}~6)$, $m \geq 3$}
\label{sec:mainC}

The quadrupling construction of Section~\ref{sec:five} implies an LS$(3,4,4n;\mu)$
from an LS$(3,4,n;\mu)$. The goal in this section is to continue and prove Theorem~\ref{thm:rec4n}.
Given the LS$(3,4,4n;\mu)$ constructed by the $(4n)$-construction we amend it to
a construction for $LS (3,4,2^m n;\mu)$, where $m \geq 3$. First, note that in order
to apply the $(4n)$-construction recursively we need an appropriate perpendicular
array and it might not exist (in fact it probably does not exist for most parameters).

The construction which will be used here is based on the idea given in~\cite{EtHa91}.
The first $(2^m-1)n\mu$ systems are based on the first $3\mu n$ systems of Type A and
are constructed similarly to the systems as explained in~\cite{EtHa91}.

First, an order of $\Z_2^m$ is induced by identifying $x \in \Z_2^m$ with a nonnegative integer
smaller than $2^m$ whose binary representation is $x$. This is the usual lexicographic order.
Next, a set of $2^m-1$ SQSs on the
point set $\Z_2^m$ and block set $B_i$ with $i \in \Z_2^m \setminus \{ 0 \}$ is defined. These sets
are called the Boolean Steiner quadruple systems. The block set $B_i$, $i \in \Z_2^m \setminus \{0\}$
is defined to be the union of the blocks of Types (B.1) and (B.2) specified below
$$
\text{(B.1)} ~~~~ \{ \{ x,y,z,w \} ~:~ x+y+z+w = i, ~~ | \{x,y,z,w\} | =4 \}
$$
$$
\text{(B.2)} ~~~~ \{ \{ x,y,z,w \} ~:~ x+y=z+w = i, ~~ | \{x,y,z,w\} | =4 \}
$$

The following result which can be easily verified was proved in~\cite{EtHa91}.
\begin{lemma}
For each $i$, $i \in \Z_2^m \setminus \{0\}$ the set $B_i$ is an SQS$(2^m)$.
\begin{itemize}
\item Each quadruple of $\Z_2^m$ is contained in at least one of these $2^m-1$ SQS$(2^m)$.

\item Each block of Type (B.1) is contained in exactly one of the $2^m-1$ SQS$(2^m)$.

\item Each block of Type (B.2) is contained in exactly three of the $2^m-1$ SQS$(2^m)$.
\end{itemize}
\end{lemma}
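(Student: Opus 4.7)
The plan is to treat the four claims in parallel, using a single elementary observation about sums in $\Z_2^m$. Throughout, write $s(X) = x_1+x_2+x_3+x_4$ for a $4$-subset $X = \{x_1,x_2,x_3,x_4\}$; recall $2a = 0$ in $\Z_2^m$, so every Type (B.2) block has $s(X) = 0$.

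First I would verify that $B_i$ is an SQS$(2^m)$. Since $|B_i|$ will turn out automatically right once we have the triple-covering property, it suffices to show that each triple $\{x,y,z\}$ of distinct elements lies in exactly one block of $B_i$. Split according to how many of the three pair-sums $x+y$, $x+z$, $y+z$ equal $i$. Since $i \neq 0$ and the three points are distinct, at most one pair-sum can equal $i$ (two such equalities would force two of the points to coincide). In the case where \emph{none} of them equals $i$, the candidate $w = i + x + y + z$ is distinct from $x,y,z$ (for instance $w=x$ would mean $y+z=i$), so the triple has the unique Type (B.1) completion and no Type (B.2) completion (which would require some pair-sum to equal $i$). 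In the case where exactly one of them equals $i$, say $x+y=i$, the Type (B.1) candidate $w = i+x+y+z = z$ collapses and is invalid, while the unique Type (B.2) completion is $w = z+i$, which is easily checked to be distinct from $x,y,z$; the other two Type (B.2) pairings are ruled out because their hypotheses fail. Either way, exactly one block of $B_i$ contains the triple.

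Next I would settle the global coverage and multiplicity statements together by examining a fixed $4$-subset $X = \{x,y,z,w\}$ and its sum $s = s(X)$. If $s \neq 0$, then $X$ is of Type (B.1) in $B_s$ and only in $B_s$ (a Type (B.1) block has sum equal to $i$, and a Type (B.2) block has sum $0$), so $X$ appears in exactly one $B_i$, establishing ``exactly once'' for (B.1) blocks. If $s = 0$, then the three pair-partitions of $X$ produce three values $i_1 = x+y = z+w$, $i_2 = x+z = y+w$, $i_3 = x+w = y+z$; each is nonzero because the points are distinct, and any coincidence $i_a = i_b$ would force two of the points to coincide, so $i_1,i_2,i_3$ are three distinct nonzero elements of $\Z_2^m$. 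Thus $X$ is a Type (B.2) block of $B_{i_1}$, $B_{i_2}$, and $B_{i_3}$, and of no other $B_i$ (since Type (B.1) membership requires $s \neq 0$ and the other $i$'s give no valid pairing). This yields both the ``at least one'' claim and the ``exactly three'' claim for Type (B.2) blocks.

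I do not expect any real obstacle; the only point requiring care is the case analysis in the triple-completion argument, specifically making sure the Type (B.1) and Type (B.2) completions do not double-count and that the candidate $w$ stays distinct from $x,y,z$. Everything else is a direct consequence of the observation that in characteristic two, a $4$-set of sum zero carries three natural pair-partitions, each summing to a distinct nonzero index.
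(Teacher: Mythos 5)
Your proof is correct. The paper itself gives no argument for this lemma---it merely cites \cite{EtHa91} and remarks that the claims ``can be easily verified''---and your case analysis (a triple contains at most one pair summing to $i$, the Type (B.1) completion $w=i+x+y+z$ collapses exactly when such a pair exists, and a sum-zero $4$-set carries three pairings with distinct nonzero indices) is precisely the natural verification, carried out completely and without gaps.
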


We continue to define the blocks of each system in the $(2^m n)$-Construction.
The point set of each SQS$(2^m n)$ is $\Z_n \times Z_2^m$.
The first $(2^m-1)n\mu$ systems are based on the first $3\mu n$ systems of Type A and
are constructed similarly to the systems constructed in~\cite{EtHa91}.

The $(i,j)$-th system $\cP_{(i,j)}$, $i \in \Z_2^m \setminus \{0\}$, $0 \leq j \leq n\mu -1$, is defined as follows.
Recall that $\cR_s$, $1 \leq s \leq n \mu$, is a set of SQS$(2n)$ on the point set
$\Z_n \times \Z_2$ defined via the DLS Construction using a PA$_\gamma (2,n,n)$.
This set will be defined now on the point set $\Z_n \times \{x,y\}$, where $x,y \in \Z_2^m$ and $x+y = i$.

Now, assume that $x+y+z+w=0$, where $|\{x,y,z,w\}|=4$ and
$x,y,z,w \in \Z_2^m$, where $x<y,z<w$, $x+y=i$, $x+z=r$, $x+w=\ell$.
Let $g$ be a bijection $g: \Z_4 \rightarrow \{x,y,z,w\}$.
On the point set $Z_n \times \{g(0),g(1),g(2),g(3)\}$ we embed the $n\mu$ systems of Type A1 for $\cP_{(i,j)}$, $0 \leq j \leq n\mu -1$.
On the point set $Z_n \times \{g(0),g(1),g(2),g(3)\}$ we embed the $n \mu$ systems of Type A2 for $\cP_{(r,j)}$, $0 \leq j \leq n\mu -1$.
On the point set $Z_n \times \{g(0),g(1),g(2),g(3)\}$ we embed the $n \mu$ systems of Type A3 for $\cP_{(\ell,j)}$, $0 \leq j \leq n\mu -1$.

\begin{remark}
Note, that $\cR_s$ is embedded only once for each point set $\Z_n \times \{ x,y \}$ for each $x,y \in \Z_2^m$
such that $x+y=i$. Similarly, the quadruples from Group 4 and the quadruples from configuration $(1,1,1,1)$ are
embedded only once on the point set $\Z_n \times \{x,y,z,w\}$ for each $x,y,z,w \in \Z_2^m$ such that $x+y=z+w=i$.
\end{remark}

Assume now, that $x+y+z+w=i$, where $|\{x,y,z,w\}|=4$ and
$x,y,z,w \in \Z_2^m$. We form the following last set of blocks in $\cP_{(i,r)}$, $0 \leq r \leq n\mu -1$.
$$
\{ \{ (a,x),(b,y),(c,z),(a+b+c+r,w) \} ~:~ a,b,c \in \Z_n \}~.
$$

\begin{lemma}
\label{lem:lastA}
Each $\cP_{(i,j)}$ is an SQS$(2^m n)$.
\end{lemma}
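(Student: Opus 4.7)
The plan is to prove $\cP_{(i,j)}$ is an SQS$(2^m n)$ by combining a block-count with a coverage argument: first verify that the total number of blocks equals $\binom{2^m n}{3}/4$, and second verify that every triple from $\Z_n\times\Z_2^m$ is contained in at least one block of $\cP_{(i,j)}$. Together these force each triple into exactly one block. The block count splits into three pieces: the $2^{m-1}$ pairs $\{u,u+i\}\subset\Z_2^m$ each contribute a full SQS$(2n)$ (so $2^{m-1}\binom{2n}{3}/4$ blocks); each $4$-subset $\{x,y,z,w\}\subset\Z_2^m$ with $x+y=z+w=i$ receives the Group~$4$ and Group~$5$ blocks from the Type~A1 system of the $(4n)$-construction of Section~\ref{sec:five} assigned to it; and each $4$-subset summing to $i$ contributes one last set of $n^3$ quadruples. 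A routine arithmetic check confirms that these total $\binom{2^m n}{3}/4$.

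For coverage I classify a triple $T$ by the multiset of its $\Z_2^m$-parts. If $T\subset\Z_n\times\{u\}$ then $T$ lies inside the embedded SQS$(2n)$ on $\Z_n\times\{u,u+i\}$. If $T$ is of $(2,1)$-type on parts $\{u,v\}$ with $u+v=i$, the same SQS$(2n)$ on $\{u,v\}$ covers $T$; otherwise the $4$-set $\{u,u+i,v,v+i\}$ has four distinct elements, sums to $0$ in $\Z_2^m$, and its unique pairing with pair-sum $i$ is $(\{u,u+i\},\{v,v+i\})$, so $\cP_{(i,j)}$ carries a Type~A1 embedding on this set whose Group~$4$ configurations (one of $(2,0,1,1),(0,2,1,1),(1,1,2,0),(1,1,0,2)$) extend $T$ uniquely via the one-factor $F_r$ containing the pair in the doubled part of~$T$. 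Finally if $T$ is of $(1,1,1)$-type on distinct parts $\{u,v,w\}$, then either some two-sum among $u+v,u+w,v+w$ equals $i$ (say $u+v=i$, whereupon $\{u,v,w,w+i\}$ is a $4$-set summing to $0$ with $i$-pairing $(\{u,v\},\{w,w+i\})$ and $T$ is covered by the Type~A1 Group~$5$ blocks on that $4$-set), or none of the three two-sums equals $i$ (whereupon $w':=u+v+w+i$ differs from $u,v,w$, the $4$-set $\{u,v,w,w'\}$ sums to $i$, and $T$ is contained in the unique block $\{(a,u),(b,v),(c,w),(a+b+c+j,w')\}$ of the last set on $\{u,v,w,w'\}$).

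The main obstacle is the bookkeeping to rule out double-coverage, particularly in the $(2,1)$-case with $u+v\ne i$ and in the $(1,1,1)$-cases. For the $(2,1)$-case one must verify that among the three pairings of $\{u,u+i,v,v+i\}$ only the $i$-pairing has pair-sum $i$ (so only Type~A1, not A2 or A3, is embedded in $\cP_{(i,j)}$ on this set), and that no other $4$-set containing $\{u,v\}$ is used in a Type~A embedding of $\cP_{(i,j)}$. For the $(1,1,1)$-case the four subcases (three two-sums equalling $i$, plus the last-set case) are mutually exclusive because a $4$-set summing to $0$ cannot simultaneously sum to $i\ne 0$; moreover inside a single Type~A1 system any $(1,1,1,0)$-type triple lies in exactly one block (by the SQS$(4n)$ property of Lemma~\ref{lem:correctA1}), which precludes any clash between Group~$4$ and Group~$5$ contributions within the same $4$-set. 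Once at-least-one coverage is established together with the block count, exact-once coverage follows automatically.
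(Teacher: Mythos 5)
Your proposal is correct and follows essentially the same strategy as the paper: verify that the number of blocks is exactly $\binom{2^m n}{3}/4$ and combine this with a one-sided containment condition, the only difference being that you check each triple is covered \emph{at least} once where the paper checks \emph{at most} once (equivalent given the count, and in fact the direction the paper itself uses for the analogous Lemma~\ref{lem:correctA1}). One minor slip: in the $(1,1,1)$-subcase with $u+v=i$ the triple need not lie in a Group~5 block of the Type~A1 embedding on $\{u,v,w,w+i\}$ --- it may instead lie in a Group~4 block of configuration $(1,1,2,0)$ or $(1,1,0,2)$, exactly as in the dichotomy in the proof of Lemma~\ref{lem:correctA1} --- but coverage still holds either way.
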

\begin{proof}
It is straightforward to prove by the definition based on the DLS Construction and the $(4n)$-construction
that each triple of $\Z_n \times \Z_2^m$ is contained in at most one block.
Hence, to complete the proof it is sufficient to prove that each system $\cP_{(i,j)}$, $i \in \Z_2^m \setminus \{0\}$, $0 \leq j \leq n\mu -1$,
contains $\binom{2^m n}{3} {\huge /} 4$ blocks as is the number of blocks in an SQS$(2^m n)$.
The number of blocks originated from Type A in a system is as follows. There are $2^{m-1} \binom{2n}{3} {\huge /} 4$ blocks
for all SQS$(2n)$ embedded on the set of points $\Z_n \times \{x,y\}$ such that $x+y=i$. The number of blocks induced
from Type A from Group 4 or configuration (1,1,1,1) in a system is $\binom{2^{m-1}}{2} (4 \cdot (n-1) \frac{n}{2} n + n^2)$.
For each $x,y,z,w \in \Z_2^m$ such that $x+y+z+w=i$ and $|\{x,y,z,w\}|=4$ the number of block in the last set is $n^3$
and in all such $x,y,z,w$ the number of blocks is $\left(  \binom{2^m}{3} {\huge /} 4 - \binom{2^{m-1}}{2}  \right) n^3$.
Therefore, the total number of blocks in $\cP_{(i,j)}$ is
$$
2^{m-1} \binom{2n}{3} {\huge /} 4 + \binom{2^{m-1}}{2} (4 \cdot (n-1) \frac{n}{2} n + n^2) + \left(  \binom{2^m}{3} {\huge /} 4 - \binom{2^{m-1}}{2}  \right) n^3
$$
which equals $\binom{2^m n}{3} {\huge /} 4$ as required.
\end{proof}

Now, we define the last $(n-3)\mu$ systems.
For each quadruple $\{x,y,z,v\} \subset \Z_2^m$ such that $x+y+z+v=0$ we form the last
$(n-1) (\mu - \gamma) = (n-3) \mu$ systems from the $(4n)$-construction on the point set $\Z_n \times  \{x,y,z,v\}$.
These last $(n - 3) \mu$ systems (SQS$(2^m n)$) are constructed based on the $(n - 3) \mu$ systems (SQS$(4n)$)
of Type B constructed in the $(4n)$-construction. Let $\cP_i$ be the $i$-th (SQS$(4n)$), $1 \leq i \leq (n - 3) \mu$
of Type B. The first set of blocks of the $i$-th last system ($(2^m-1)n\mu+i$ system of the whole large set)
for the $(2^m n)$-construction are defined as follows.
Recall that $F=\{F_0,F_1, \ldots, F_{n-2} \}$ is a one-factorization on $K_n$
and $M$ is an $(n-1) \times (n-1)$ Latin square on the points set $\{ 0,1,\ldots,n-2 \}$.
Finally, $\cS^*_i$, $1 \leq i \leq (n-3) \mu$, is the $i$-th SQS$(n)$ in a large set LS$(3,4,n;\mu)$.

For each $j \in \Z_2^m$ and each block $\{ a,b,c,d \} \in \cS^*_i$ construct the block
$$
\{ (a,j),(b,j),(c,j),(d,j) \}.
$$

Given $j$, $0 \leq j \leq n-2$, and $i$,
$j (\mu - \gamma) < i \leq (j+1) (\mu - \gamma)$, the following blocks form the second set
of blocks in the $i$-th system (from the $(n-3) \mu$ last systems).

$$
\{ \{ (a,\ell),(b,\ell),(c,t),(d,t)  \} ~:~ \ell,t \in \Z_2^m,~ \ell < t,~ \{a,b\} \in F_r,~ \{c,d\} \in F_{M(j,r)},~ 0 \leq r \leq n-2 \}.
$$

For each block $\{ (a,0),(b,1),(c,2),(d,3) \} \in \cP_i$ and for each four distinct values
${x,y,z,w \in \Z_2^m}$ such that $x<y<z<w$ and $x+y+z+w=0$ construct the block
$$
\{ (a,x),(b,y),(c,z),(d,w) \}.
$$
These blocks form the third set of blocks in the construction.

\begin{lemma}
\label{lem:lastB}
Each one of the last $(n-3) \mu$ system is an SQS$(2^m n)$.
\end{lemma}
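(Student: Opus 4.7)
My plan is to show the system in question is an SQS$(2^m n)$ by verifying that every 3-subset of $\Z_n\times\Z_2^m$ is contained in exactly one block. The three families of blocks constructed have mutually exclusive layer signatures: the first family (built from $\cS_i^*$) lives entirely in one layer $\Z_n\times\{j\}$, the second family (built from $F$ and $M$) has two points in each of two distinct layers, and the third family (built from a Type B system $\cP_i$) has one point in each of four distinct layers. Since every 3-subset falls into exactly one \emph{layer-type}---$(3)$, $(2,1)$, or $(1,1,1)$---depending on the distribution of its points among the layers, it can only be covered by blocks from the corresponding family, and it suffices to verify unique coverage within each family in turn.

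The two easy cases are straightforward. A triple of layer-type $(3)$ is contained in a unique block of the first family by the SQS$(n)$ property of $\cS_i^*$. A triple $\{(a,\ell),(b,\ell),(c,t)\}$ of layer-type $(2,1)$ with $\ell\neq t$ is handled by noting that the pair $\{a,b\}$ sits in a unique $F_r$; if $\ell<t$ this $r$ is used directly, and if $\ell>t$ then the Latin-square property of $M$ gives that $r$ uniquely determines the required index (since $M(j,\cdot)$ is a permutation of $\{0,\dots,n-2\}$). Either way, the one-factor appearing at the other layer contains the third point $c$ in a unique pair, producing a unique block of the second family.

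The main work is the layer-type $(1,1,1)$ case. For a triple $\{(p_1,q_1),(p_2,q_2),(p_3,q_3)\}$ with $q_1,q_2,q_3$ distinct in $\Z_2^m$, set $q_4\eqdef q_1+q_2+q_3$. Distinctness in characteristic two forces $q_4\notin\{q_1,q_2,q_3\}$, so $\{q_1,q_2,q_3,q_4\}$ is the unique zero-sum 4-subset containing the three given layers. Ordering it as $x<y<z<w$, the image of our triple under the bijection $(x,y,z,w)\leftrightarrow(0,1,2,3)$ used in the construction of the third family is a 3-subset of $\Z_n\times\{0,1,2,3\}$ whose layers form a 3-subset of $\{0,1,2,3\}$ (the specific 3-subset depends on where $q_4$ sits relative to $q_1<q_2<q_3$, yielding four subcases). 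By Lemma~\ref{lem:sysB}, $\cP_i$ is an SQS$(4n)$, so this image triple lies in a unique block of $\cP_i$. The subtle point, which I expect to be the main obstacle, is that this unique block must necessarily have configuration $(1,1,1,1)$ so that it does lift to a block of the third family; this follows because the Type B construction produces blocks only in Groups~1, 3, and~5, and no block of those groups contains three points in three distinct layers together with a fourth point in one of those same layers (all such possibilities would belong to Group~4, which is absent from $\cP_i$). Combining the three cases shows every triple is in exactly one block, so the system is an SQS$(2^m n)$.
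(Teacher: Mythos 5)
Your proof is correct, and it takes a genuinely different route from the paper's. The paper counts the blocks in the three families, checks that the total is $\binom{2^m n}{3}/4$, and then asserts (leaving the verification to the reader) that no triple lies in more than one block; the ``exactly one'' then follows from the count. You instead give a direct two-sided verification: the observation that the three families have pairwise incompatible layer signatures, matching the three possible layer-types of a triple, lets you prove ``exactly one'' within each family separately, with no counting needed. Your treatment of the $(1,1,1)$ case supplies precisely the details the paper omits -- the uniqueness of the zero-sum $4$-set $\{q_1,q_2,q_3,q_4\}$ in characteristic two, and the fact that the unique block of $\cP_i$ through the image triple must have configuration $(1,1,1,1)$ because Group~4 is absent from Type~B -- and this last point is indeed the only place where something could go wrong. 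What the paper's approach buys is brevity and a reusable template (the same count-plus-disjointness scheme is used for Lemma~\ref{lem:sysB} and Lemma~\ref{lem:lastA}); what yours buys is a self-contained argument that simultaneously establishes existence and uniqueness of the covering block and makes explicit why the Latin-square property of $M$ and the restriction of $\cP_i$ to Groups 1, 3, 5 are needed. Both are valid; yours is the more complete write-up.
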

\begin{proof}
The number of blocks in the first set of each system is $2^m \binom{n}{3} {\huge /} 4$,
in the second set is $\binom{2^m}{2} (n-1) \frac{n^2}{4}$, and in the third set is $\binom{2^m}{3} n^3 {\huge /} 4$.
Hence, the total number of blocks in each system is $\binom{2^m n}{3} {\huge /} 4$ which is the number of blocks in an SQS$(2^m n)$.
One can easily continue and verify that no triple of $\Z_n \times \Z_2^m$ is contained in more than
one block of the system, which completes the proof.
\end{proof}

The proof of the following lemma is identical and follows from the proof of Lemma~\ref{lem:propB}.
\begin{lemma}
The last $(n - 3) \mu$ systems of SQS$(2^m n)$ have the following properties
\begin{enumerate}
\item Each quadruple $\{ (a,j),(b,j),(c,j),(d,j) \}$, $\{ a,b,c,d \} \subset \Z_n$, $j \in \Z_2^m$
is contained in exactly $\mu$ systems.

\item Each quadruple $\{ (a,i),(b,i),(c,j),(d,j) \}$, where $i,j \in \Z_2^m$, $i < j$,
and $\{a,b\} \subset \Z_n$, $\{ c,d \} \subset \Z_n$, is contained in exactly $\mu - \gamma$ systems.

\item For given four distinct values $x,y,z,w \in \Z_2^m$, such that $x+y+z+w=0$,
the number of quadruples of the form $\{ (a,x),(b,y),(c,z),(d,w) \}$, where
$a,b,c,d \in \Z_n$, which are contained in the last $(n-3) \mu$ systems
is $(n -3)\mu n^3$.
\end{enumerate}
\end{lemma}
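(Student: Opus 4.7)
The plan is to prove each of the three properties by a direct counting argument that mirrors the proof of Lemma~\ref{lem:propB} for Type B of the $(4n)$-construction. The three sets of blocks in each of the last $(n-3)\mu$ systems correspond, respectively, to configurations of Group~1 (a quadruple sitting in a single coordinate $j\in\Z_2^m$), configurations of Group~3 (two points in coordinate $\ell$ and two in coordinate $t$ with $\ell<t$), and the ``cross'' configurations where the four points lie in four distinct coordinates $x,y,z,w$ with $x+y+z+w=0$. I would handle the three parts in this order.

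For part (1), I would observe that each quadruple of the form $\{(a,j),(b,j),(c,j),(d,j)\}$ with $\{a,b,c,d\}\subset\Z_n$ appears in the first set of blocks of exactly those systems whose index $i$ is such that $\{a,b,c,d\}\in\cS^*_i$. Since $\{\cS^*_i:1\le i\le (n-3)\mu\}$ is the given LS$(3,4,n;\mu)$, this quadruple lies in precisely $\mu$ of the $\cS^*_i$, and for each such $i$ the quadruple appears once for every $j\in\Z_2^m$ in the first set of blocks. Since distinct $j$'s give disjoint blocks, each such quadruple lies in exactly $\mu$ systems of the last batch, as claimed.

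For part (2), the key is that, exactly as in the proof of Lemma~\ref{lem:propB}(2), the blocks in the second set depend on the parameter $j$ of the outer grouping ``given $j$, $0\le j\le n-2$, and $i$ with $j(\mu-\gamma)<i\le(j+1)(\mu-\gamma)$.'' For a fixed pair $(\ell,t)$ with $\ell<t$ in $\Z_2^m$, and a fixed quadruple $\{(a,\ell),(b,\ell),(c,t),(d,t)\}$, the pair $\{a,b\}$ lies in a unique one-factor $F_r$; then it appears in the second set of blocks exactly when $\{c,d\}\in F_{M(j,r)}$. Since $M$ is a Latin square on $\{0,1,\dots,n-2\}$, there is a unique row $j_0$ such that $M(j_0,r)$ equals the index of the one-factor containing $\{c,d\}$, and then the quadruple appears in all $\mu-\gamma$ systems in the block indexed by this $j_0$. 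This gives exactly $\mu-\gamma$ occurrences, matching the claim.

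For part (3), which I expect to be the simplest, I would compute directly: each of the $(n-3)\mu$ systems contributes blocks to the third set, one for each block of $\cP_i$ in configuration $(1,1,1,1)$ of the underlying $(4n)$-construction. By Lemma~\ref{lem:propB}(3) the total number of such blocks across all $\cP_i$ equals $(n-3)\mu n^3$. Given a fixed $(x,y,z,w)$ with $x<y<z<w$ and $x+y+z+w=0$, each block of $\cP_i$ in configuration $(1,1,1,1)$ contributes exactly one quadruple of the shape $\{(a,x),(b,y),(c,z),(d,w)\}$ to the third set. Summing over the $(n-3)\mu$ systems yields $(n-3)\mu n^3$ quadruples of this shape, as required. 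The main (mild) obstacle is bookkeeping: ensuring that the permutation parameter $j$ of the Latin square $M$ and the configuration indices $\ell,t,x,y,z,w$ in $\Z_2^m$ are kept disjoint notationally, so that the argument for part (2) is a clean lift of the $(4n)$-construction version rather than a new calculation.
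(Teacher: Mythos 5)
Your proposal is correct and takes essentially the same route as the paper, which simply states that the proof is identical to that of Lemma~\ref{lem:propB}: part (1) from the defining property of the LS$(3,4,n;\mu)$, part (2) from the Latin square $M$ selecting a unique row index $j_0$ for each pair of one-factors, and part (3) by counting $n^3$ configuration-$(1,1,1,1)$ blocks per system over $(n-3)\mu$ systems. Your write-up merely makes explicit (correctly) the details the paper leaves implicit.
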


\noindent
{\bf Proof of Theorem~\ref{thm:rec4n}:}

The existence of an LS$(3, 4, n;\mu)$ is given in the theorem. Hence, by Theorem~\ref{thm:sumDoubling}
there exists an LS$(3, 4, 2n;\mu)$ and by Theorem~\ref{thm:sum4n} there exists an LS$(3, 4, 4n;\mu)$.
The rest of the proof is induced from the $(2^m n)$-construction.

By Lemma~\ref{lem:lastA} and Lemma~\ref{lem:lastB}, each one of the systems of
the $(2^m n)$-Construction is an SQS$(2^m n)$. The number of such systems in the construction
is $(2^m -1) n \mu + (n-3) \mu = 2^m n \mu -3\mu$ as required in an LS$(3, 4, 2^m n;\mu)$, $m \geq 3$.

The proof that each quadruple of $\Z_n \times \Z_2^m$ is contained in exactly $\mu$ of the systems
is similar to the one in Theorem~\ref{thm:sum4n} and it is left to the reader.
\begin{flushright}
$\Box$
\end{flushright}

Theorem~\ref{thm:PA_pairs}, Theorem~\ref{thm:LS10}, and Theorem~\ref{thm:rec4n} imply that
\begin{corollary}
There exist an LS$(3,4,5 \cdot 2^m;9)$ for each integer $m \geq 1$.
\end{corollary}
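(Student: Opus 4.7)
The plan is to verify that all the hypotheses of Theorem~\ref{thm:rec4n} are met with the parameter triple $(n,\mu,\gamma) = (10, 9, 2)$, and then to rewrite the conclusion in the form required by the corollary. This is a short, direct synthesis; no new construction is needed.

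First I would check the relation $\mu = \tfrac{(n-1)\gamma}{2}$ required by Theorem~\ref{thm:rec4n}: plugging in $n = 10$ and $\gamma = 2$ gives $\tfrac{9 \cdot 2}{2} = 9 = \mu$, so the arithmetic constraint holds. Next I would invoke the two ingredient results. Theorem~\ref{thm:PA_pairs}(3) supplies a perpendicular array PA$_2(2,10,10)$, which is exactly a PA$_\gamma(2,n,n)$ for our choice of parameters. Theorem~\ref{thm:LS10} guarantees an LS$(3,4,10;\mu)$ for every $\mu \geq 2$, and in particular for $\mu = 9$, giving the required base large set with multiplicity.

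With both hypotheses in hand, Theorem~\ref{thm:rec4n} produces an LS$(3,4,2^{m'} \cdot 10; 9)$ for every integer $m' \geq 0$. To match the form in the statement, I would perform the index shift $m = m' + 1$: since $2^{m'} \cdot 10 = 5 \cdot 2^{m'+1}$, the family of large sets thus obtained is exactly $\{\text{LS}(3,4,5 \cdot 2^m; 9) : m \geq 1\}$, as claimed.

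There is essentially no obstacle here; the only thing that could go wrong is a mismatch of the parameter relation $\mu = \tfrac{(n-1)\gamma}{2}$, which is why $n=10$, $\gamma=2$ works so cleanly (it is the combination that makes the available PA$_2(2,10,10)$ yield a useful multiplicity). The proof therefore consists of one short paragraph in the paper, invoking the three cited results in sequence.
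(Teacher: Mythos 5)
Your proposal is correct and matches the paper exactly: the paper derives this corollary by combining Theorem~\ref{thm:PA_pairs} (for PA$_2(2,10,10)$), Theorem~\ref{thm:LS10} (for LS$(3,4,10;9)$), and Theorem~\ref{thm:rec4n}, which is precisely your parameter choice $(n,\mu,\gamma)=(10,9,2)$ together with the index shift $2^{m'}\cdot 10 = 5\cdot 2^{m'+1}$. Your write-up simply makes explicit the parameter check $\mu = \frac{(n-1)\gamma}{2}$ that the paper leaves implicit.
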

\begin{corollary}
There exist an LS$(3,4,5 \cdot 2^m;9\ell)$ for each integer $m \geq 1$ and each integer $\ell \geq 1$.
\end{corollary}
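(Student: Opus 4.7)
The plan is to derive the corollary by a straightforward scaling of the previous corollary's ingredients, substituting a multiplied perpendicular array in place of the base one, and then feeding the result into Theorem~\ref{thm:rec4n}.

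First, I would observe that $5 \cdot 2^m = 10 \cdot 2^{m-1}$, so for each $m \geq 1$ the target is an LS$(3,4,10 \cdot 2^{m-1};9\ell)$. This suggests applying Theorem~\ref{thm:rec4n} with $n=10$. To do so I need two inputs: a large set LS$(3,4,10;\mu)$ and a perpendicular array PA$_\gamma(2,10,10)$ satisfying $\mu = \frac{(10-1)\gamma}{2} = \frac{9\gamma}{2}$. Choosing $\gamma = 2\ell$ forces $\mu = 9\ell$, which matches the desired multiplicity.

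The key step is producing a PA$_{2\ell}(2,10,10)$ from the available PA$_2(2,10,10)$ of Theorem~\ref{thm:PA_pairs}(3). This is immediate by stacking: concatenating $\ell$ (not necessarily distinct) copies of a PA$_2(2,10,10)$ row by row gives a single $\ell \cdot 2 \binom{10}{2} \times 10$ array in which each $2$-subset of $\mathbb{Z}_{10}$ still appears $2\ell$ times in any pair of columns, i.e., a PA$_{2\ell}(2,10,10)$. The corresponding LS$(3,4,10;9\ell)$ exists for every $\ell \geq 1$ by Theorem~\ref{thm:LS10}, since $9\ell \geq 2$.

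With both inputs in hand, Theorem~\ref{thm:rec4n} applied with $n=10$, $\gamma = 2\ell$, $\mu = 9\ell$ yields an LS$(3,4,2^{m'} \cdot 10;9\ell)$ for every $m' \geq 0$. Reindexing by $m = m'+1 \geq 1$ gives the desired LS$(3,4,5 \cdot 2^m;9\ell)$ for every $m \geq 1$ and every $\ell \geq 1$, completing the proof. There is no real obstacle here; the only substantive point worth emphasizing is the (trivial) multiplication closure of perpendicular arrays, which is what allows the free $\ell$-parameter to pass through the recursion.
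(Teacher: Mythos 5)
Your proof is correct and takes essentially the same route the paper intends: the corollary is attributed to Theorem~\ref{thm:PA_pairs}, Theorem~\ref{thm:LS10}, and Theorem~\ref{thm:rec4n}, and your observation that stacking $\ell$ copies of the PA$_2(2,10,10)$ yields a PA$_{2\ell}(2,10,10)$ (which, paired with the LS$(3,4,10;9\ell)$ of Theorem~\ref{thm:LS10}, feeds Theorem~\ref{thm:rec4n} with $\gamma=2\ell$, $\mu=9\ell$) is precisely the detail the paper leaves implicit. An even shorter derivation is to take the union of $\ell$ copies of the LS$(3,4,5\cdot 2^m;9)$ from the preceding corollary, but your argument is equally valid.
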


Theorem~\ref{thm:rec4n} and Theorem~\ref{thm:expand} imply our final result.

\begin{corollary}
For each $m \geq 1$ and each $\ell \geq 1$ there exists an LH$(5 \cdot 2^m,9 \ell ,4,3)$.
\end{corollary}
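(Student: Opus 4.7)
The plan is to combine the recursive construction of large sets with multiplicity (Theorem~\ref{thm:rec4n}) with the inflation mechanism for H-designs (Theorem~\ref{thm:expand}), passing through Theorem~\ref{thm:LSH} to bridge from large sets with multiplicity to large sets of H-designs. The argument proceeds in three short steps.

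First, I would produce the underlying large set with multiplicity on $5\cdot 2^m$ points. Theorem~\ref{thm:LS10} (via Lemma~\ref{lem:3LS}) supplies an LS$(3,4,10;9)$ and Theorem~\ref{thm:PA_pairs} supplies a PA$_2(2,10,10)$. With $n=10$, $\gamma = 2$, and $\mu = (n-1)\gamma/2 = 9$, these match the hypotheses of Theorem~\ref{thm:rec4n} exactly, so that theorem yields an LS$(3,4,10\cdot 2^m;9) = $ LS$(3,4,5\cdot 2^{m+1};9)$ for every $m\geq 0$; equivalently, an LS$(3,4,5\cdot 2^m;9)$ exists for every $m\geq 1$.

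Second, I would upgrade this large set with multiplicity into a large set of H-designs with group size $9$. Theorem~\ref{thm:t_t+1OA} (applied with $t=3$, $g=9$) produces an OA$(3,4,9)$. Since $g^{k-t} = 9^{4-3} = 9$, the existence of this OA$(3,4,9)$ together with the LS$(3,4,5\cdot 2^m;9)$ from Step 1 satisfies the hypotheses of Theorem~\ref{thm:LSH} with $t=3$, $k=4$, $n = 5\cdot 2^m$, $g=9$. The conclusion is an LH$(5\cdot 2^m,9,4,3)$ for every $m\geq 1$.

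Third, I would inflate the group size from $9$ to $9\ell$. For $\ell\geq 2$, Theorem~\ref{thm:t_t+1OA} yields an OA$(3,4,\ell)$, and the case $\ell=1$ is trivial (identifying LH$(5\cdot 2^m,9,4,3)$ with the target for $\ell=1$). Applying Theorem~\ref{thm:expand} with $u=\ell$ to the LH$(5\cdot 2^m,9,4,3)$ of Step 2 delivers the desired LH$(5\cdot 2^m,9\ell,4,3)$. There is no genuine obstacle in this final proof: the technical work is already embedded in the recursive quadrupling/doubling construction behind Theorem~\ref{thm:rec4n} and in the ad-hoc LS$(3,4,10;9)$; the only point requiring care is verifying that the multiplicity provided by Theorem~\ref{thm:rec4n} (namely $\mu=9$) matches the exponent $g^{k-t}=9$ demanded by Theorem~\ref{thm:LSH}, which it does precisely because $k-t=1$.
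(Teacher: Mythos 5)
Your proposal is correct and follows essentially the same route the paper intends: Theorem~\ref{thm:rec4n} applied to LS$(3,4,10;9)$ and PA$_2(2,10,10)$, then Theorem~\ref{thm:LSH} with an OA$(3,4,9)$ to pass to LH$(5\cdot 2^m,9,4,3)$, and finally Theorem~\ref{thm:expand} with an OA$(3,4,\ell)$ to inflate the group size to $9\ell$ (your only slip is cosmetic: the OA$(3,4,9)$ comes from Theorem~\ref{thm:t_t+1OA} with its parameter $t=4$, not $t=3$). The paper's one-line justification even omits the explicit invocation of Theorem~\ref{thm:LSH}, so your write-up is, if anything, slightly more complete.
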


\section{Conclusion and Problems for Future research}
\label{sec:conclusion}

The lack of known constructions for large sets of Steiner systems S$(t,k,n)$,
where $2 < t < k<n$ has motivated the definition of a large set of Steiner systems
with multiplicity. In such a system each $t$-subset of the $n$-set is contained in exactly
$\mu$ systems of the large set. The existence of such large sets with multiplicity implies
the existence of large sets for H-designs with related parameters. A recursive construction for
large set of Steiner quadruple systems with multiplicity was given. For small parameters some ad-hoc
constructions for large sets with multiplicity were given using perpendicular arrays and ordered
designs. Except for the large sets of H-designs derived from large sets with multiplicity, some
ad-hoc constructions for large sets of H-designs with blocks of size four and small number of groups
were also presented.

The exposition of this paper raises many open problems and in particular one would like to
see construction of large sets for larger range of parameters and small multiplicity as possible.



\end{document}